\documentclass[12pt]{amsart}
\usepackage[margin=1.2in]{geometry}
\usepackage{graphicx,latexsym}
\usepackage{comment}
\usepackage{tikz}
\usepackage{amsfonts, amssymb, amsmath, amsthm, bm, hyperref}
\usepackage{tikz}
\usetikzlibrary{matrix,arrows}

\newcommand{\ds}{\displaystyle}
\newcommand{\N}{\mathbb{N}}
\newcommand{\Z}{\mathbb{Z}}
\newcommand{\R}{\mathbb{R}}
\newcommand{\C}{\mathbb{C}}
\newcommand{\Ree}{\operatorname{Re}}
\newcommand{\Imm}{\operatorname{Im}}

\newcommand{\EE}{\mathcal E}
\newcommand{\DD}{\mathcal D}
\newcommand{\SSS}{\mathcal S}

\newcommand{\dx}{{\rm d}x }

\newcommand{\du}{{\rm d}u }

\newcommand{\dt}{{\rm d}t }
\newcommand{\dxi}{{\rm d}\xi }

\newcommand{\csn}{\operatorname{csn}}
\newcommand{\beq}{\begin{eqnarray}}
\newcommand{\eeq}{\end{eqnarray}}

\newcommand{\beqs}{\begin{eqnarray*}}
\newcommand{\eeqs}{\end{eqnarray*}}

\newtheorem{theorem}{Theorem}[section]
\newtheorem{proposition}[theorem]{Proposition}
\newtheorem{lemma}[theorem]{Lemma}
\newtheorem{corollary}[theorem]{Corollary}

\theoremstyle{definition}

\newtheorem{example}[theorem]{Example}

\theoremstyle{remark}
\newtheorem{remark}[theorem]{Remark}

\numberwithin{equation}{section}

\begin{document}
\title[Factorization in Denjoy-Carleman classes]{Factorization in Denjoy-Carleman classes associated to representations of $(\mathbb{R}^{d},+)$}

\author[A. Debrouwere]{Andreas Debrouwere}
\thanks{A. Debrouwere was supported by  FWO-Vlaanderen through the postdoctoral grant 12T0519N}
\address{A. Debrouwere, Department of Mathematics: Analysis, Logic and Discrete Mathematics\\ Ghent University\\ Krijgslaan 281\\ 9000 Gent\\ Belgium}
\email{andreas.debrouwere@UGent.be}

\author[B. Prangoski]{Bojan Prangoski}
\thanks{B. Prangoski was partially supported by the bilateral project ``Microlocal analysis and applications'' funded by the Macedonian and Serbian academies of sciences and arts}
\address{B. Prangoski, Faculty of Mechanical Engineering\\ University Ss. Cyril and Methodius \\ Karpos II bb \\ 1000 Skopje \\ Macedonia}
\email{bprangoski@yahoo.com}

\author[J. Vindas]{Jasson Vindas}
\thanks {J. Vindas was supported by Ghent University through the BOF-grants 01J11615 and 01J04017.}
\address{J. Vindas, Department of Mathematics: Analysis, Logic and Discrete Mathematics\\ Ghent University\\ Krijgslaan 281\\ 9000 Gent\\ Belgium}
\email{jasson.vindas@UGent.be}

\subjclass[2010]{\emph{Primary.}  42A85, 46E10, 46E25. \emph{Secondary} 46F05, 46H05.}
\keywords{The strong factorization property; Dixmier-Malliavin factorization theorem; ultradifferentiable vectors; Denjoy-Carleman classes; convolution algebras of ultradifferentiable functions}

\begin{abstract}
For two types of moderate growth representations  of $(\R^d,+)$ on sequentially complete locally convex Hausdorff spaces (including F-representations \cite{G-K-L}), we introduce Denjoy-Carleman classes of ultradifferentiable vectors
and show a strong factorization theorem of Dixmier-Malliavin type for them. In particular, our factorization theorem solves \cite[Conjecture 6.4]{G-K-L} for analytic vectors of representations of $G =(\R^d,+)$. As an application, we show that various convolution algebras and modules of ultradifferentiable functions satisfy the strong factorization property.
\end{abstract}
\maketitle

\section{Introduction}
Let $(\pi,E)$ be a continuous representation of a real Lie group $G$ on a Fr\'echet space $E$ and denote by $E^{\infty}$ the corresponding space of smooth vectors.
The representation $(\pi,E)$ induces an action $\Pi$ of the convolution algebra $\mathcal{D}(G)$ of compactly supported smooth functions on $E$ via
$$
\Pi(f)e=\int_G f(g)\pi(g)e \:{\rm d}g,  \qquad f \in \mathcal{D}(G), e \in E,
$$
which restricts to an action on $E^\infty$. Hence, $E^\infty$ becomes a module over  $(\mathcal{D}(G), \ast)$.  A celebrated result of Dixmier and Malliavin \cite{D-M}  states that $E^\infty$ has the weak factorization property, i.e.,  $E^{\infty}=\mathrm{span}(\Pi(\mathcal{D}(G))E^{\infty})$.

In general, a module $\mathcal{M}$ over a non-unital algebra $\mathcal{A}$ is said to have the \emph{strong (weak) factorization property} if $\mathcal{M} = \mathcal{A} \cdot \mathcal{M} := \{ a \cdot m \, | \, a \in \mathcal{A}, m \in \mathcal{M} \}$ ($\mathcal{M} =  \mathrm{span}(\mathcal{A} \cdot \mathcal{M})$). An algebra $\mathcal{A}$ is said to have the strong or weak factorization property if it has the corresponding property when considered as a module over itself.

Gimperlein et al.~ \cite{G-K-L} showed a variant of the result of Dixmier and Malliavin for analytic vectors: Let $(\pi,E)$ be an $F$-representation of a real Lie group $G$ on a Fr\'echet space $E$.
The corresponding space $E^\omega$ of analytic vectors naturally carries the structure of an $\mathcal{A}(G)$-module, where $\mathcal{A}(G)$ is a suitable convolution algebra of analytic functions having superexponential decay.
They proved that  $E^\omega$ has the weak factorization property  \cite[Theorem 1.1]{G-K-L} and  conjectured that it might be possible to substantially strengthen this result by showing that $E^\omega$  has the strong factorization property  for any $F$-representation \cite[Conjecture 6.4]{G-K-L}. If $(\pi,E)$ is a Banach representation of $(\R^d,+)$, they showed that $E^\omega$  indeed has the strong factorization property \cite[p.\ 679]{G-K-L}; see also \cite{lienau} for the case of bounded Banach representations of  $(\R,+)$.

The main goal of this article is to prove this conjecture for $G = (\R^d, +)$ and further extend it in two directions. On the one hand, we consider two types of moderate growth representations $(\pi,E)$  of $(\R^d,+)$ on sequentially complete
locally convex Hausdorff spaces $E$  (including proto-Banach representations \cite{Glockner} and thus $F$-representations). This setting is indispensable for the applications we have in mind. On the other hand, and more importantly, we consider general Denjoy-Carleman classes defined via a weight sequence $M = (M_p)_{p \in \N}$ \cite{Komatsu} and associated to such representations. In Section \ref{sect-uv} we introduce the spaces $E^{(M)}$ and $E^{\{M\}}$ of ultradifferentiable vectors of Beurling and Roumieu type, respectively, as the space of  vectors in $E$ whose orbit mapping is a (bornological) ultradifferentiable function of Beurling and Roumieu type, respectively.
These naturally carry the structure of a module  over appropriately chosen convolution algebras of ultradifferentiable functions having superexponential decay. Our main result, Theorem \ref{factorization}, asserts that $E^{(M)}$ and $E^{\{M\}}$ satisfy the strong factorization property.
In this framework, the analytic case corresponds to the Roumieu variant of $M = (p!)_{p \in \N}$. However, in general, the sequence $M$ is allowed to grow much slower than $p!$ and we therefore go beyond analyticity, e.g., we are able to treat all  Gevrey sequences $p!^\sigma$, $\sigma > 0$. For Banach representations $(\pi,E)$, the spaces $E^{\{p!^\sigma\}}$, $\sigma \geq 1$, were considered by Goodman and Wallach \cite{Goodman1970,Goodman1971,GW}.

The proof of the factorization theorem consists of two main parts. In Section \ref{Fmultipliers} we study  Fourier multipliers associated to a class of symbols of entire functions on $\C^d$ satisfying certain growth conditions  on tube domains with compact base (with respect to the associated function of $M$). Most importantly, we show how these operators can be properly defined on certain weighed spaces of vector-valued ultradifferentiable functions; the latter spaces naturally arise in a characterization of bounded sets of ultradifferentiable vectors we provide in Section \ref{boundedsetSection}. Next, in Section \ref{thesectiononthefact}, we construct elements $P$ of this entire function symbol class that also satisfy suitable lower bounds, which will allow us to conclude that the Fourier transform of $1/P$ belongs to the algebra involved in the factorization problem. The theory developed in Section \ref{Fmultipliers} enables us to turn the key identity $(1/P) \cdot P = 1$ into a parametrix type identity of convolutor operators that may be applied to the orbits of ultradifferentiable vectors, from which the factorization theorem readily follows. The essential difference with the approach from \cite{G-K-L} (and \cite{D-M}), which allows us to prove  the strong instead of the weak factorization property, is to consider general Fourier multipliers rather than infinite order differential operators.

Closely related to the result of Dixmier and Malliavin is the problem of factorization of convolution algebras of smooth functions, which emerged from Ehrenpreis' work \cite{ehrenpreis} on fundamental solutions of convolution operators. The original question of Ehrenpreis was whether $\mathcal{D}(\R^d)$ has the strong factorization property. Rubel et al.~ \cite{rst} showed that this is not the case for $d\geq 3$. On the other hand, they proved that $\mathcal{D}(\R^d)$ always satisfies the weak factorization property. Dixmier and Malliavin \cite{D-M}  gave a negative answer to this question for $d=2$. Finally, the problem was completely settled by Yulmukhametov \cite{yul} who showed  that for $d = 1$ the space $\mathcal{D}(\R)$ does satisfy  the strong factorization property. Similarly, Miyazaki \cite{Miyazaki}, Petzeltov\'a and Vrbov\'a \cite{pet-vr} and Voigt \cite{Voigt} independently showed that the Schwartz space $\mathcal{S}(\R^d)$ of rapidly decreasing smooth functions has the strong factorization property.

As an application of our main result, we show in Section \ref{sect-application} that various convolution algebras and modules of ultradifferentiable functions satisfy the strong factorization property. Most notably, we prove that the Gelfand-Shilov  spaces $\SSS^{(M)}_{(A)}(\R^d)$ and $\SSS^{\{M\}}_{\{A\}}(\R^d)$ \cite{ppv} have the strong factorization property and that the family of translation invariant spaces $\mathcal{D}^{(M)}_E$  and $\mathcal{D}^{\{M\}}_E$ from \cite{D-P-P-V, D-P-V} factor as $\mathcal{D}^{(M)}_E = \SSS^{(M)}_{(A)}(\R^d) \ast \mathcal{D}^{(M)}_E$ and $\mathcal{D}^{\{M\}}_E = \SSS^{\{M\}}_{\{A\}}(\R^d) \ast \mathcal{D}^{\{M\}}_E$.
Particular instances of the latter spaces are the analogues of the Schwartz spaces $\mathcal{D}_{L^p}$, $1 \leq p < \infty$, and $\dot{\mathcal{B}}$  \cite{SchwartzBook} in the setting of ultradifferentiable functions.

\section{Preliminaries}\label{sect-prel}
Given a lcHs (= locally convex Hausdorff space) $E$, we write $\csn(E)$ for the family of continuous seminorms on $E$ and $\mathfrak{B}(E)$ for the collection of non-empty absolutely convex closed bounded subsets of $E$. For $B \in \mathfrak{B}(E)$ we denote by $E_B$ the subspace of $E$ spanned by $B$ and endowed with the topology generated by the Minkowski functional of $B$. Since $E$ is Hausdorff, the space $E_B$ is normed, while it is complete if $B$ is sequentially complete \cite[Corollary 23.14]{MeiseVogtBook}. In particular,  if $E$ is sequentially complete, $E_B$ is a Banach space for every $B \in \mathfrak{B}(E)$.

All vector-valued integrals in this article will be meant in the weak sense. We will often tacitly use the following fact: Let $E$ be a sequentially complete lcHs. Let ${\bf{f}} \in C(\R^d;E)$ be such that for all $p \in \csn(E)$  it holds that
$\int_{\R^d} p({\bf{f}}(x)) \dx < \infty$.
Then, the $E$-valued  weak integral $\int_{\R^d} {\bf{f}}(x) \dx$ exists.

Next, we introduce weight sequences in order to discuss ultradifferentiability. A sequence $M = (M_{p})_{p \in \N}$ of positive numbers is called a \emph{weight sequence} if  $M_0 = M_1 = 1$, $\lim_{p \to \infty} M_{p}^{1/p} =  \infty$, and $M$ is log-convex, i.e., $M_{p}^{2} \leq M_{p-1} M_{p+1}$ for all $p \in \Z_+$. We write $M_\alpha = M_{|\alpha|}$ for $\alpha \in \Z^d$. Furthermore, we set $m_p = M_p/M_{p-1}$ for $p \in \Z_+$. We shall make use of the following two conditions on weight sequences $M$:
\begin{itemize}
\item[$(M.2)\ $]  $\ds M_{p+q} \leq C_0H^{p+q} M_{p} M_{q}$, $p,q\in \N$, for some $C_0,H\geq1$;
\item[$(M.2)^*$] $2m_p \leq m_{Np}$, $p \geq p_0$, for some $p_0,N \in \Z_+$.
\end{itemize}
Condition $(M.2)$ is also known as `stability under ultradifferential operators' and was introduced by Komatsu \cite{Komatsu}. Condition $(M.2)^*$ was introduced by Bonet et al. \cite{BMM} without a name; we use here the same notation as in \cite{D-V}. We refer to \cite{JGSS} for more information and various equivalent formulations of these conditions; in particular, one might verify that $(M.2)^*$ is equivalent to the condition $(M.5)$ employed in \cite{ppv}. The most important examples of weight sequences that satisfy the conditions $(M.2)$ and $(M.2)^*$ are the \emph{Gevrey sequences} $p!^{\sigma}$, $\sigma>0$.

The \emph{associated function} of $M$ is defined as
$$
\nu_M(t) = \sup_{p\in\N}\log \frac{t^p}{M_p},\qquad t \geq 0.
$$
We have that $\nu_{p!^\sigma}(t) \asymp t^{1/\sigma}$, $\sigma >0$. We extend $\nu_M$ to $\R^d$ as the radial function $\nu_M(x) = \nu_M(|x|)$, $x \in \R^d$. The weight sequence $M$ satisfies $(M.2)$ if and only if \cite[Proposition 3.6]{Komatsu}
 $$
2\nu_M(t) \leq \nu_M(Ht) + \log C_0, \qquad t \geq 0,
$$
where $C_0$ and $H$ are the constants occurring in $(M.2)$. If $M$ satisfies $(M.2)^*$, then \cite[Lemma 12]{BMM}
$$
\nu_M(2t) \leq L\nu_M(t) + \log C, \qquad t \geq 0,
$$
for some $C,L > 0$. If $M$ satisfies $(M.2)$, then the converse also holds true \cite[Proposition 13]{BMM}.

The relation $M \subset N$ between two weight sequences means that there are $C,L \geq 1$ such that
$M_p\leq CL^{p}N_{p}$ for all $p\in\N$. By \cite[Lemma 3.8]{Komatsu}, we have that $M \subset N$ if and only if
$$
\nu_N(t) \leq \nu_M(Lt) + \log C, \qquad t \geq 0.
$$
for some $C,L > 0$.

We write $\mathfrak{R}$ for the set of all non-decreasing sequences $r = (r_j)_{j \in \N}$  of positive numbers such that $r_0 = r_1 = 1$ and $r_j \to \infty$ as $j \to \infty$. It is partially ordered and directed by the relation $r \preceq s$  which means that $r_j\leq s_j$ for all $j\in \N$. This set of sequences plays a fundamental role in Komatsu's approach to spaces of vector-valued ultradifferentiable functions of Roumieu type \cite{Komatsu3}. The following lemma is a simple but very useful tool and hints the way in which $\mathfrak{R}$ often appears involved in some arguments.
\begin{lemma}[{\cite[Lemma 3.4]{Komatsu3}}]
\label{klemma}
Let $(a_p)_{p \in \N}$ be a sequence of positive numbers.
\begin{itemize}
\item[$(i)$]
$
\ds\sup_{p \in \N} \frac{a_p}{h^p} < \infty
$
for some $h > 0$ if and only if
$
\ds\sup_{p \in \N} \frac{a_p}{\prod_{j = 0}^p r_j} < \infty
$
for all $r \in \mathfrak{R}$.
\item[$(ii)$]
$
\ds\sup_{p \in \N} h^pa_p < \infty
$
for all $h > 0$ if and only if
$
\ds \sup_{p \in \N} a_p\prod_{j = 0}^p r_j < \infty
$
for some $r \in \mathfrak{R}$.
\end{itemize}
\end{lemma}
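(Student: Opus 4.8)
The plan is to prove the two parts in parallel. In each part one implication is a direct estimate, while the converse requires \emph{constructing} a witnessing sequence $r \in \mathfrak{R}$; the constructive implications, namely the ``if'' direction of $(i)$ and the ``only if'' direction of $(ii)$, carry the real content, and I expect the inductive choice of block lengths in $(ii)$ to be the main obstacle.

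For the two direct implications I would exploit the single defining feature of $\mathfrak{R}$: since $r_j \to \infty$, for every $h > 0$ there is an index $j_0$ with $r_j \ge h$ for all $j \ge j_0$, and hence $\prod_{j=0}^p r_j \ge \big(\prod_{j=0}^{j_0-1} r_j\big)\, h^{\,p-j_0+1}$ for $p \ge j_0$. In the ``only if'' direction of $(i)$, substituting $a_p \le C h^p$ into this inequality bounds $a_p/\prod_{j=0}^p r_j$ by $C h^{\,j_0-1}/\prod_{j=0}^{j_0-1} r_j$, a constant independent of $p$ (the finitely many indices $p < j_0$ being harmless). The ``if'' direction of $(ii)$ is the same computation read backwards: given $r$ with $\sup_p a_p \prod_{j=0}^p r_j =: C < \infty$, one estimates $h^p a_p \le C h^p/\prod_{j=0}^p r_j$ and concludes as before.

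For the constructive implications I would use one device throughout: partition $\N$ into consecutive blocks and make $r$ constant on each. Given a strictly increasing sequence $1 = P_0 < P_1 < P_2 < \cdots$, set $r_0 = r_1 = 1$ and $r_j = k$ for $P_{k-1} < j \le P_k$; this always defines an element of $\mathfrak{R}$, and on the $k$-th block one has the telescoping identity $\prod_{j=0}^p r_j = A_k\, k^{\,p-P_{k-1}}$, where $A_k = \prod_{i=1}^{k-1} i^{\,P_i-P_{i-1}}$ collects the factors from the earlier blocks. For the ``if'' direction of $(i)$ I argue by contraposition: if no bound $a_p \le C h^p$ holds, then $\sup_p a_p/n^p = \infty$ for each $n \in \Z_+$, and since finitely many small indices cannot affect boundedness this upgrades to $\limsup_p a_p/n^p = \infty$. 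I may therefore pick $p_1 < p_2 < \cdots$ with $a_{p_n} \ge n^{\,p_n+1}$ and take $P_k = p_k$; the crude bound $\prod_{j=0}^{p_n} r_j = \prod_{k=1}^n k^{\,p_k-p_{k-1}} \le n^{\,p_n}$ then yields $a_{p_n}/\prod_{j=0}^{p_n} r_j \ge n \to \infty$, so this $r$ defeats the right-hand side.

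Finally, for the ``only if'' direction of $(ii)$ I use the hypothesis as $C_k := \sup_p k^p a_p < \infty$, i.e.\ $a_p \le C_k k^{-p}$ for every $k$. By the telescoping identity, on the $k$-th block $a_p \prod_{j=0}^p r_j \le C_k A_k k^{-P_{k-1}}$, a quantity free of $p$. The thresholds are then chosen inductively: when $P_{k-1}$ is selected (with $P_0,\dots,P_{k-2}$, and hence $A_{k-1}$, already fixed), I note that $C_k A_k k^{-P_{k-1}} = C_k A_{k-1}(k-1)^{-P_{k-2}}\big((k-1)/k\big)^{P_{k-1}} \to 0$ as $P_{k-1} \to \infty$, so I may take $P_{k-1}$ large enough that this expression is $\le 1$. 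Arranging each block to be long enough to absorb all factors accumulated by its predecessors is the delicate bookkeeping step; once it is done, $\sup_p a_p \prod_{j=0}^p r_j \le \max(C_1,1) < \infty$ (the initial block contributing only $\sup_p a_p = C_1$), which completes the argument.
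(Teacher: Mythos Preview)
The paper does not give its own proof of this lemma; it simply cites \cite[Lemma~3.4]{Komatsu3}, so there is nothing to compare your argument against directly. Your proof is correct: the two easy implications follow immediately from $r_j\to\infty$, and the block construction with inductively chosen thresholds $P_k$ handles both constructive directions. The bookkeeping in the ``only if'' part of $(ii)$---choosing $P_{k-1}$ after $P_0,\dots,P_{k-2}$ so that $C_kA_{k-1}(k-1)^{-P_{k-2}}((k-1)/k)^{P_{k-1}}\le 1$---is carried out carefully, and the telescoping identity makes the estimates transparent.
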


We now introduce two basic spaces of vector-valued ultradifferentiable functions.  Let $M$ be a weight sequence and let $E$ be a lcHs. Following Komatsu \cite{Komatsu3}, we define the space  $\EE^{(M)}(\R^d;E)$ of \emph{$E$-valued ultradifferentiable functions of class $(M)$ (of Beurling type)} as the space consisting of all $\bm{\varphi}\in C^{\infty}(\R^d;E)$ such that for all $K\Subset \R^d$, $h>0$, and $p\in \csn(E)$ it holds that
\beqs
p_{h,K}(\bm{\varphi}):= \sup_{\alpha\in\N^d}\sup_{x\in K}\frac{h^{|\alpha|}p(\partial^{\alpha}\bm{\varphi}(x))}{M_{\alpha}}<\infty.
\eeqs
We endow $\EE^{(M)}(\R^d;E)$  with the locally convex topology generated by the system of seminorms $\{p_{h,K} \, | \,  h>0, K\Subset \R^d, p\in\csn(E)\}$. Similarly,  we define the space  $\EE^{\{M\}}(\R^d;E)$ of \emph{$E$-valued ultradifferentiable functions of class $\{M\}$ (of Roumieu type)} as the space consisting of all $\bm{\varphi}\in C^{\infty}(\R^d;E)$ such that for all $K\Subset \R^d$, $r\in\mathfrak{R}$, and $p\in \csn(E)$ it holds that
\beqs
p_{r,K}(\bm{\varphi}):=\sup_{\alpha\in\N^d}\sup_{x\in K}\frac{p(\partial^{\alpha}\bm{\varphi}(x))}{M_{\alpha} \prod_{j = 0}^{|\alpha|} r_j}<\infty.
\eeqs
The space $\EE^{\{M\}}(\R^d;E)$ is endowed with the locally convex topology generated by the system of seminorms  $\{p_{r,K}\, | \, r\in\mathfrak{R}, K\Subset \R^d, p\in\csn(E)\}$. We shall employ $[M]$ as a common notation for $(M)$ and $\{M\}$. In addition, we shall often first state assertions for the Beurling case followed in parenthesis by the corresponding statements for the Roumieu case.

\begin{remark} The space of complex-valued ultradifferentiable functions of Roumieu type on $\R^d$ is usually defined as
$$
\mathcal{E}^{\{M\}}(\R^d) = \varprojlim_{K \Subset \R^d} \varinjlim_{h \to 0^+} \mathcal{E}^{M_p,h}(K),
$$
where $K$ runs over all regular compact subsets of $\R^d$ and $\mathcal{E}^{M_p,h}(K)$ denotes the Banach space consisting of all $\varphi \in C^\infty(K)$ such that
\beqs
\sup_{\alpha\in\N^d}\sup_{x\in K}\frac{h^{|\alpha|}|\partial^{\alpha}\varphi(x)|}{M_{\alpha}}<\infty.
\eeqs
Lemma \ref{klemma} implies that the spaces $\EE^{\{M\}}(\R^d;\C)$ and $\EE^{\{M\}}(\R^d)$ agree as sets. If $M$ satisfies Komatsu's condition $(M.2)'$ \cite{Komatsu} (which is a weaker version of $(M.2)$), these spaces also coincide topologically \cite[Corollary 1]{D-P-V-proj}.
\end{remark}

\section{Ultradifferentiable vectors and the factorization theorem}\label{sect-uv}
%We start by introducing the class of representations we shall be concerned with.
Given a lcHs E,  we denote by $\operatorname{GL}(E)$ its group of isomorphisms. By a \emph{representation of $(\R^d,+)$ on $E$} we mean a group homomorphism $\pi: (\R^d,+) \rightarrow \operatorname{GL}(E)$ such that the mapping
$$
\R^d \times E \rightarrow E, \quad  (x,e) \mapsto \pi(x)e
$$
is separately continuous. We denote by $E^\infty$ the space of smooth vectors in $E$, i.e., the space consisting of all $e \in E$ whose orbit mapping
$$
\gamma_e: \R^d \rightarrow E, \quad \gamma_e(x) = \pi(x)e,
$$
belongs to $C^\infty(\R^d;E)$. We will employ the following short-hand notation
$$
e_\alpha = \partial^{\alpha}\gamma_e(0), \qquad e \in E^\infty,\, \alpha \in \N^d.
$$
Note that $\partial^{\alpha}\gamma_e = \gamma_{e_\alpha}$.

If $E$ is a Banach space, $(\pi,E)$ is called a \emph{Banach representation}.  In such a case, there are $\kappa, C > 0$ such that
$$
\| \pi(x)e\|_E \leq Ce^{\kappa|x|}\|e\|_E, \qquad  x \in \R^d, e \in E.
$$
Motivated by the above inequality, we now introduce two new classes of representations on general lcHs $E$. A representation $(\pi,E)$ is said to be a \emph{projective generalized proto-Banach representation} if
\begin{equation}
\label{projective}
\begin{gathered}
\forall p \in \csn(E) \, \exists q_p \in \csn(E) \, \exists \kappa_p > 0 \, \forall x \in \R^d \, \forall e \in E \, : \, \\
p(\pi(x)e) \leq e^{\kappa_p|x|}q_p(e)
\end{gathered}
\end{equation}
and an \emph{inductive generalized proto-Banach representation} if
%$p \in \csn(E)$ there is $q_p \in \csn(E)$ and $\kappa_p > 0$ such that
\begin{equation}
\label{inductive}
\begin{gathered}
\forall B \in \mathfrak{B}(E) \, \exists A_B \in \mathfrak{B}(E) \, \exists \kappa_B > 0 \, \forall x \in \R^d \, \forall e \in E_B \, : \, \\
\|\pi(x)e\|_{E_{A_B}} \leq e^{\kappa_B|x|}\|e\|_{E_B}.
\end{gathered}
\end{equation}
Every Banach representation is both a projective and an inductive generalized proto-Banach representation. Furthermore, every proto-Banach representation \cite{Glockner} (and thus particularly every $F$-representation \cite{G-K-L}) is a
projective generalized proto-Banach representation.

Let $(\pi,E)$ be a representation. Let $M$ be a weight sequence. A vector $e \in E$ is called an \emph{ultradifferentiable vector of class $[M]$  in $E$} if the orbit mapping $\gamma_e$ is bornologically ultradifferentiable of class $[M]$ \cite{Komatsu3}, i.e.,  if there is $B \in \mathfrak{B}(E)$ such that $\gamma_e \in  \mathcal{E}^{[M]}(\R^d;E_B)$. The space consisting of all ultradifferentiable vectors of class $[M]$ in $E$ is denoted by $E^{[M]}$. We endow $E^{[M]}$ with a convex vector bornology in the following way: A set $\widetilde{B} \subset E^{[M]}$ is defined to be bounded if there is $B \in \mathfrak{B}(E)$ such that $\{\gamma_e  \, | \, e \in  \widetilde{B} \}$ is contained and bounded in $\mathcal{E}^{[M]}(\R^d; E_B)$. %The spaces $E^{\{p!^\sigma\}}$, $\sigma \geq 1$, were considered by Goodman and Wallach \cite{Goodman1970,Goodman1971, GW} for Banach representations $(\pi,E)$. Furthermore,
\begin{remark}
If $(\pi,E)$ is an $F$-representation, the space $E^\omega$ of analytic vectors  considered in \cite{G-K-L} coincides with $E^{\{p!\}}$, as follows from \cite[Proposition 12]{B-D} and the remark added at the end of that article.
\end{remark}

Let us introduce the necessary concepts to state our strong factorization theorem for ultradifferentiable vectors. Assume that $E$ is sequentially complete. We define the Fr\'echet space
$$
C_{\operatorname{exp}}(\R^d)= \{ f \in C(\R^d) \, | \, \sup_{x \in \R^d} |f(x)|e^{n|x|} < \infty, \quad \forall n \in \N \}.
$$
Note that $(C_{\operatorname{exp}}(\R^d), \ast)$ is a Fr\'echet algebra. If $(\pi,E)$ is a projective or an inductive generalized proto-Banach representation, we set
$$
\Pi(f)e = \int_{\R^d} f(x) \pi(x)e\:\dx = \int_{\R^d} f(x) \gamma_e(x)\dx \in E, \qquad f \in  C_{\operatorname{exp}}(\R^d),\, e \in E.
$$
When $(\pi,E)$ is a projective generalized proto-Banach representation, the mapping
\beqs
C_{\operatorname{exp}}(\R^d)\times E\rightarrow E: \ (f,e)\mapsto \Pi(f)e 
\eeqs
is continuous. If $(\pi,E)$ is an inductive generalized proto-Banach representation, then $\Pi(f)e \in E_{A_B}$ for all $f \in  C_{\operatorname{exp}}(\R^d)$ and $e \in E_B$, and the mapping
\beqs
C_{\operatorname{exp}}(\R^d)\times E_B\rightarrow E_{A_B}:\ (f,e)\mapsto \Pi(f)e
\eeqs
is continuous. In particular, for each $f\in C_{\operatorname{exp}}(\R^d)$ and each bounded subset $B$ of $E$, $\Pi(f)B$ is bounded in $E$.

%, i.e.,
%\begin{equation}
%$$
%\Pi(f \ast g)e = \Pi(f)(\Pi(g)e), \qquad  f,g \in  C_{\operatorname{exp}}(\R^d),\, e \in E,
%$$
%\label{action-algebra}
%\end{equation}
%%and the bilinear mapping
%$$
%\Pi:  C_{\operatorname{exp}}(\R^d) \times E \rightarrow E, (f,e) \mapsto \Pi(f)e
%$$
%is continuous.
%Note that
%$$
%\gamma_{\Pi(f)e}(x) = \int_{\R^d} f(t) \gamma_e(x - t) \dt, \qquad f \in C_{\operatorname{exp}}(\R^d),\, e \in E.\
%$$
%We also define the  $(LF)$-space
%$$
%\mathcal{K}^{\{M\}}(\R^d) = \bigcup_{h > 0} \mathcal{K}^{M,h}(\R^d).$$

For $h > 0$ we define the Fr\'echet space
$$
\mathcal{K}^{M,h}(\R^d) = \{ \varphi \in C^\infty(\R^d) \, | \, \sup_{\alpha \in \N^d} \sup_{x \in \R^d} \frac{h^{|\alpha|}|\partial^{\alpha}\varphi(x)|e^{n|x|}}{M_\alpha} < \infty, \quad \forall n \in \N   \}.
$$
We set
$$
\mathcal{K}^{(M)}(\R^d) = \varprojlim_{h \to \infty} \mathcal{K}^{M,h}(\R^d) \qquad \mbox{and}\qquad \mathcal{K}^{\{M\}}(\R^d) = \varinjlim_{h \to 0^+} \mathcal{K}^{M,h}(\R^d).
$$
We remark that $\mathcal{K}^{[M]}(\R^d)$ is the space of ultradifferentiable vectors of class $[M]$ in $C_{\operatorname{exp}}(\R^d)$ under the regular representation (cf.\ Subsection \ref{weighted}).

The next theorem is the main result of this article.
\begin{theorem}\label{factorization}
Let $(\pi,E)$ be either a projective or an inductive generalized proto-Banach representation of $(\R^d,+)$ on a sequentially complete lcHs $E$. Let $M$ be a weight sequence satisfying $(M.2)$ and $(M.2)^*$. Then,
$$
\Pi(\mathcal{K}^{[M]}(\R^d)) E = \Pi(\mathcal{K}^{[M]}(\R^d)) E^{[M]} = E^{[M]}.
$$
Moreover, for every bounded set $\widetilde B \subset E^{[M]}$ there is $\psi \in \mathcal{K}^{[M]}(\R^d)$ and a bounded set $\widetilde A \subseteq E^{[M]}$ such that $\Pi(\psi)\widetilde A= \widetilde B$.
\end{theorem}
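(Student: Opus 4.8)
The plan is to convert the scalar factorization $1 = (1/P)\cdot P$ for a suitable entire symbol $P$ into a parametrix identity for a Fourier multiplier acting on orbit maps, exploiting that the orbit map is covariant: $\gamma_e(x+a) = \pi(a)\gamma_e(x)$, so that $\tau_a\gamma_e = \pi(-a)\gamma_e$ where $\tau_a$ denotes translation by $a$. Since any translation-invariant operator $P(D)$ acts in the argument and therefore commutes with the fixed continuous operators $\pi(a)$, the function $g := P(D)\gamma_e$ satisfies $\tau_a g = \pi(-a)g$, i.e.\ $g(x) = \pi(x)g(0)$ for all $x$; hence $g = \gamma_f$ is again an orbit map, with $f := (P(D)\gamma_e)(0)$. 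Provided $P(D)$ preserves the ultradifferentiable class, $f$ will then lie in $E^{[M]}$.

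First I would dispose of the easy inclusions. A change of variables gives $\gamma_{\Pi(f)e} = \check f \ast \gamma_e$ for $f \in \mathcal{K}^{[M]}(\R^d)$ and $e \in E$, where $\check f(x) = f(-x)$; convolving the merely exponentially bounded continuous map $\gamma_e$ with the superexponentially decaying ultradifferentiable function $f$ produces an element of $\mathcal{E}^{[M]}(\R^d;E_B)$ for a suitable $B \in \mathfrak{B}(E)$, so $\Pi(\mathcal{K}^{[M]}(\R^d))E \subseteq E^{[M]}$ and a fortiori $\Pi(\mathcal{K}^{[M]}(\R^d))E^{[M]} \subseteq E^{[M]}$. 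Consequently the two asserted equalities reduce, via the chain $E^{[M]} = \Pi(\mathcal{K}^{[M]}(\R^d))E^{[M]} \subseteq \Pi(\mathcal{K}^{[M]}(\R^d))E \subseteq E^{[M]}$, to producing for each $e \in E^{[M]}$ a factorization $e = \Pi(\psi)f$ with $\psi \in \mathcal{K}^{[M]}(\R^d)$ and $f \in E^{[M]}$.

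The heart of the argument is the construction of $P$, drawing on Sections \ref{Fmultipliers} and \ref{thesectiononthefact}. I would pick an entire function $P$ on $\C^d$ whose growth on the tubes $\R^d + iK$, $K\Subset\R^d$, is controlled by the associated function $\nu_M$, so that: (i) the multiplier $P(D)$ is well defined, continuous, and maps the weighted vector-valued ultradifferentiable space carrying $\gamma_e$ into $\mathcal{E}^{[M]}(\R^d;E_{B'})$ --- here $(M.2)$ is what guarantees stability of the class $[M]$ under such an operator; and (ii) $P$ admits matching lower bounds, ensuring that $1/P$ satisfies the same symbol estimates and hence that $\psi := \mathcal{F}^{-1}(1/P)$ (up to reflection) belongs to $\mathcal{K}^{[M]}(\R^d)$. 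Condition $(M.2)^*$ is precisely what permits this simultaneous two-sided control. Granting such a $P$, the scalar identity $(1/P)\cdot P = 1$ becomes, at the level of orbit maps, $\gamma_{\Pi(\psi)f} = \check\psi \ast P(D)\gamma_e = \gamma_e$; evaluating at $x = 0$ yields $\Pi(\psi)f = e$, with $f$ the vector of the first paragraph.

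The main obstacle is making $P(D)$ rigorous on these orbit maps. Because the representation only provides exponential bounds, $\gamma_e$ need not decay, so $P(D)$ cannot be defined by a naive Fourier integral; instead it must be realized through the entire extension of $P$ and a deformation of the integration contour into the tubes $\R^d+iK$, with all estimates uniform in the defining seminorms of $\mathcal{E}^{[M]}(\R^d;E_B)$. This is the substance of Section \ref{Fmultipliers}, and is where the real work lies. Once it is in place, the bounded-set refinement is automatic: for a bounded $\widetilde B \subset E^{[M]}$ the family $\{\gamma_e \mid e \in \widetilde B\}$ is bounded in some $\mathcal{E}^{[M]}(\R^d;E_B)$, and continuity of $P(D)$ makes $\widetilde A := \{f \mid e \in \widetilde B\}$ bounded in $E^{[M]}$; since $e \mapsto f$ is a bijection with $\Pi(\psi)f = e$ for the single $\psi$ above, we obtain $\Pi(\psi)\widetilde A = \widetilde B$.
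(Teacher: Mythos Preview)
Your overall strategy is the paper's: build $P\in\mathcal{P}^{M,h}(\C^d)$ with matching lower bounds, set $\psi=\mathcal{F}(1/P)$, turn $(1/P)\cdot P=1$ into a parametrix identity $\psi\ast(g\ast\gamma_e)=\gamma_e$ with $g=\widetilde{\mathcal{F}}(P)$, use translation covariance to recognize $g\ast\gamma_e$ as an orbit map $\gamma_{g\ast e}$, and evaluate at $0$. Two points, however, deserve correction.

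\smallskip

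\textbf{The Beurling case is not covered by your argument.} The construction of $P$ in Proposition~\ref{par-2} yields lower bounds $|P(z)|\ge C_n^{-1}e^{\nu_M(\delta\,\Ree z)}$ on each tube $V_n$ for a \emph{fixed} $\delta>0$; consequently $1/P\in\mathcal{U}^{\{M\}}(\C^d)$ and $\psi\in\mathcal{K}^{\{M\}}(\R^d)$, but never $\mathcal{K}^{(M)}(\R^d)$: a single $P$ cannot simultaneously satisfy $|P|\le e^{\nu_M(\Ree z)}$ from above and $|P|\ge e^{\nu_M(n\Ree z)}$ for all $n$ from below. The paper does not attempt this. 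Instead it reduces the Beurling to the Roumieu case via Lemma~\ref{kts135179}: given a bounded $\widetilde B\subset E^{(M)}$, Proposition~\ref{char-E*} provides $r\in\mathfrak{R}$ controlling the orbits, and one replaces $M$ by the weight sequence $N_p=M_p/\prod_{j=0}^p r'_j$ for a suitable $r'\preceq r$ so that $N$ again satisfies $(M.2)$ and $(M.2)^*$; the Roumieu factorization for $N$ then produces $\psi\in\mathcal{K}^{\{N\}}(\R^d)\subset\mathcal{K}^{(M)}(\R^d)$. Without this step your proof only establishes the Roumieu half.

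\smallskip

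\textbf{The realization of $P(D)$ differs from what you sketch.} You propose to make sense of $P(D)\gamma_e$ by ``deformation of the integration contour into the tubes $\R^d+iK$''. The paper takes a different route: it defines $\widetilde{\mathcal{F}}(P)\in\mathcal{O}'^{M,\cdot}_C(\R^d)$ via the short-time Fourier transform (Proposition~\ref{quantization}), proves a structural representation $\widetilde{\mathcal{F}}(P)=\sum_\alpha\partial^\alpha g_\alpha$ with $g_\alpha\in C_{\operatorname{exp}}(\R^d)$ (Lemma~\ref{structural}), and then lets this convolutor act on the exponentially bounded orbits in $\mathcal{Q}^{M,h}_{\boldsymbol\kappa}(\R^d;E)$ by the explicit formula \eqref{ultrapolsub} (Proposition~\ref{vv-conv}). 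This is what makes the covariance step $g\ast\gamma_e=\gamma_{g\ast e}$ and the boundedness of $\widetilde A=\{g\ast e:e\in\widetilde B\}$ completely transparent. A contour-shift approach is not obviously workable here: $\gamma_e$ is only exponentially bounded, has no reason to extend holomorphically, and its Fourier transform is an $E$-valued analytic functional rather than a function, so there is no integrand to deform.
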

The proof of Theorem \ref{factorization} is postponed to Section \ref{thesectiononthefact}. In preparation for it, we need to establish a number of results in the next two sections.
\section{Bounded subsets of ultradifferentiable vectors}
\label{boundedsetSection}
We provide in this section a characterization of the bounded subsets of $E^{[M]}$ for projective and inductive generalized proto-Banach representations of $(\R^{d},+)$ (for which we use the  same notation as in \eqref{projective} and \eqref{inductive}). The following spaces of $E$-valued ultradifferentiable functions are involved in such a characterization.

Let $(\pi,E)$ be a representation on the lcHs $E$ and fix a weight sequence $M$. Let $\bm{\kappa} = (\kappa_p)_{p \in \csn(E)}$ be a net of positive numbers (the set $\csn(E)$ is directed by $p\leq q$, which means $p(e)\leq q(e)$, $\forall e\in E$).  For $h > 0$ we define  $\mathcal{Q}^{M,h}_{{\boldsymbol{\kappa}}}(\R^d;E)$ as the space consisting of all ${\bf{f}} \in C^\infty(\R^d;E)$ such that for all $p \in \csn(E)$ it holds that
$$
p_{\bm{\kappa},h}({\bf{f}}) =  \sup_{\alpha \in \N^d} \sup_{x \in \R^d} \frac{h^{|\alpha|}p(\partial^{\alpha}{\bf{f}}(x))e^{-\kappa_p|x|}}{M_\alpha} < \infty.
$$
We endow $\mathcal{Q}^{M,h}_{{\boldsymbol{\kappa}}}(\R^d;E)$  with the locally convex topology generated by the system of seminorms  $\{p_{\bm{\kappa},h} \, | \, p\in\csn(E)\}$. Similarly, for $r \in \mathfrak{R}$ we define $\mathcal{Q}^{M,r}_{{\boldsymbol{\kappa}}}(\R^d;E)$ as the space consisting of all ${\bf{f}} \in C^\infty(\R^d;E)$ such that for all $p \in \csn(E)$ it holds that
$$
p_{\bm{\kappa},r}({\bf{f}}) =  \sup_{\alpha \in \N^d} \sup_{x \in \R^d} \frac{\prod_{j = 0}^{|\alpha|} r_jp(\partial^{\alpha}{\bf{f}}(x))e^{-\kappa_p|x|}}{M_{\alpha}} < \infty,
$$
and we endow it  with the locally convex topology generated by the system of seminorms  $\{p_{\bm{\kappa},r} \, | \, p\in\csn(E)\}$.

\begin{proposition}\label{char-E*} Let $(\pi,E)$ be either a projective or an inductive generalized proto-Banach representation of $(\R^d,+)$ on a lcHs $E$.
Let $\widetilde{B} \subseteq E^\infty$. The following statements are equivalent:
\begin{itemize}
\item[$(i)$] $\widetilde{B}$ is a bounded subset of $E^{[M]}$.
\item[$(ii)$] There are $B \in \mathfrak{B}(E)$ and $r \in \mathfrak{R}$  ($h > 0$) such that $e_\alpha \in E_B$ for all $\alpha \in \N^d$ and $e\in\widetilde{B}$ and
$$
\sup_{e\in\widetilde{B}}\sup_{\alpha \in \N^d} \frac{\prod_{j=0}^{|\alpha|}r_j\|e_\alpha\|_{E_B}}{M_\alpha} < \infty \qquad \left(\sup_{e\in\widetilde{B}}\sup_{\alpha \in \N^d} \frac{h^{|\alpha|}\|e_\alpha\|_{E_B}}{M_\alpha} < \infty \right).
$$
\item[$(iii)$] There is $r \in \mathfrak{R}$ ($h>0$) such that for all $p \in \operatorname{csn}(E)$ it holds that
$$
\sup_{e\in\widetilde{B}}\sup_{\alpha \in \N^d} \frac{\prod_{j=0}^{|\alpha|}r_jp(e_\alpha)}{M_\alpha} < \infty \qquad  \left(\sup_{e\in\widetilde{B}}\sup_{\alpha \in \N^d} \frac{h^{|\alpha|}p(e_\alpha)}{M_\alpha} < \infty\right).
$$
\item[$(iv)$] \emph{($(\pi,E)$ projective generalized proto-Banach representation)}  There is $r \in \mathfrak{R}$ ($h>0$)  such that $\mathbf{B}=\{\gamma_e \, | \, e\in\widetilde{B}\}$ is a bounded subset of  $\mathcal{Q}^{M,r}_{{\boldsymbol{\kappa}}}(\R^d;E)$ ($\mathcal{Q}^{M,h}_{{\boldsymbol{\kappa}}}(\R^d;E)$).

\medskip

\noindent \emph{($(\pi,E)$ inductive generalized proto-Banach representation)}  There are $B \in \mathfrak{B}(E)$ and $r \in \mathfrak{R}$  ($h > 0$)  such that $\mathbf{B}=\{\gamma_e \, | \, e\in\widetilde{B}\}$ is a bounded subset of  $\mathcal{Q}^{M,r}_{\kappa_B}(\R^d;E_{A_B})$ ($\mathcal{Q}^{M,h}_{\kappa_B}(\R^d;E_{A_B})$).
\end{itemize}
\end{proposition}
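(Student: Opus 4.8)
The plan is to prove the chain of equivalences $(i) \Leftrightarrow (ii) \Leftrightarrow (iii) \Leftrightarrow (iv)$ by exploiting the translation-invariance built into orbit mappings, namely the identity $\partial^\alpha \gamma_e = \gamma_{e_\alpha}$ together with $\gamma_{e_\alpha}(x) = \pi(x) e_\alpha$. The central observation is that for an orbit mapping the supremum over all $x$ of a derivative is controlled by the single value at the origin, modulated by the growth estimate of the representation. I would organize the argument around this: the value $\partial^\alpha \gamma_e(x) = \pi(x) e_\alpha$ at a general point is, up to the exponential factor coming from \eqref{projective} or \eqref{inductive}, comparable to $e_\alpha = \partial^\alpha \gamma_e(0)$. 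This lets me pass freely between weighted global estimates on the whole orbit (as in $(iv)$, where the $e^{-\kappa_p|x|}$ factor exactly absorbs the representation growth) and pointwise-at-zero estimates (as in $(ii)$ and $(iii)$).

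The most efficient order seems to be to first establish $(iii) \Leftrightarrow (iv)$, since these are the closest in form. For the projective case, given $(iv)$ and fixing $p \in \csn(E)$, evaluating the defining seminorm $p_{\bm\kappa,h}(\gamma_e)$ (or $p_{\bm\kappa,r}$) at $x = 0$ immediately yields $(iii)$ because $e^{-\kappa_p \cdot 0} = 1$ and $\partial^\alpha\gamma_e(0) = e_\alpha$. Conversely, from $(iii)$ I would use the projective estimate \eqref{projective}: choosing for each $p$ the seminorm $q_p$ and exponent $\kappa_p$ supplied by \eqref{projective}, I bound $p(\partial^\alpha\gamma_e(x)) = p(\pi(x)e_\alpha) \leq e^{\kappa_p|x|} q_p(e_\alpha)$, so that the weight $e^{-\kappa_p|x|}$ cancels the growth and $(iii)$ applied to the seminorm $q_p$ furnishes the required bound, giving boundedness in $\mathcal{Q}^{M,r}_{\bm\kappa}$. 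The inductive case runs identically but with $B \in \mathfrak B(E)$, the seminorm $\|\cdot\|_{E_{A_B}}$, and the estimate \eqref{inductive} playing the roles of $q_p$ and the exponential bound; here $(ii)$ is the natural intermediary since it is already phrased in terms of $E_B$.

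For $(ii) \Leftrightarrow (iii)$, the implication $(ii) \Rightarrow (iii)$ is routine: any continuous seminorm $p$ is bounded on the bounded set $B$, so $p(e_\alpha) \leq C_p \|e_\alpha\|_{E_B}$. The reverse direction $(iii) \Rightarrow (ii)$ is where I expect the genuine work, because I must produce a single $B \in \mathfrak B(E)$ (and a single $r$ or $h$) that simultaneously controls all the $e_\alpha$ uniformly over $e \in \widetilde B$; this is exactly the step that manufactures a bounded set in the Banach space $E_B$ out of a family of seminorm estimates. For this I would take $B$ to be the closed absolutely convex hull of the set $\{M_\alpha^{-1}\prod_{j=0}^{|\alpha|} r_j\, e_\alpha \mid \alpha \in \N^d,\ e \in \widetilde B\}$ (Roumieu) or the analogous set with $h^{|\alpha|}$ (Beurling), and verify it lies in $\mathfrak B(E)$: the estimate $(iii)$ guarantees every continuous seminorm is bounded on this set, hence the set is bounded, and sequential completeness of $E$ makes $E_B$ a Banach space by the remark in Section \ref{sect-prel}. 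The Roumieu case needs the additional diagonalization-type care of adjusting the sequence $r \in \mathfrak R$, and Lemma \ref{klemma} is the tool that lets one trade between "for some/all $r$" and "for some/all $h$" formulations.

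Finally, $(i) \Leftrightarrow (ii)$ closes the loop by unwinding definitions. By definition, $\widetilde B$ is bounded in $E^{[M]}$ precisely when there is $B \in \mathfrak B(E)$ with $\{\gamma_e \mid e \in \widetilde B\}$ bounded in $\mathcal E^{[M]}(\R^d; E_B)$. Since the defining seminorms of $\mathcal E^{[M]}(\R^d; E_B)$ involve $\sup_{x \in K}$ over compact $K$ rather than the global weighted sup, I would pass from the global $\mathcal Q$-estimates to the local $\mathcal E$-estimates (trivial, as one only shrinks the domain of the supremum and drops the exponential weight on compacta) and back (again using the representation growth bound to upgrade a local estimate on, say, $K = \{0\}$ to a global weighted one). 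The anticipated main obstacle is the uniform construction of $B$ in $(iii) \Rightarrow (ii)$, and in the Roumieu setting the correct handling of the sequence space $\mathfrak R$; both are managed by Lemma \ref{klemma} and the Banach-space structure of $E_B$ under sequential completeness.
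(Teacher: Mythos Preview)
Your overall plan is sound and closely parallels the paper's approach: the chain $(i) \Rightarrow (ii) \Leftrightarrow (iii) \Rightarrow (iv)$ works essentially as you describe, and the construction of $B$ as the closed absolutely convex hull of the rescaled derivatives $\{M_\alpha^{-1}\prod_{j=0}^{|\alpha|} r_j\, e_\alpha\}$ is exactly right (and is what the paper means when it calls $(ii)\Leftrightarrow(iii)$ ``obvious'').

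However, there is a genuine gap in the step where you close the loop back to $(i)$. Boundedness in $E^{[M]}$ requires, by definition, that $\gamma_e \in \mathcal{E}^{[M]}(\R^d; E_B)$ for some $B \in \mathfrak{B}(E)$, and in particular that $\gamma_e \in C^\infty(\R^d; E_B)$. From $(ii)$, $(iii)$, or the projective version of $(iv)$ you only know that $\gamma_e \in C^\infty(\R^d; E)$ and that all values $\partial^\alpha\gamma_e(x) = \pi(x)e_\alpha$ lie in suitable scalar multiples of $B$ (or $A_B$). Knowing that the derivatives \emph{land} in $E_B$ does not by itself guarantee that the map is differentiable \emph{in the finer topology} of $E_B$: difference quotients that converge in $E$ need not converge in the norm $\|\cdot\|_{E_B}$. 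You treat this as ``unwinding definitions'' or as a trivial restriction of the domain of a supremum, but it is precisely the nontrivial content of Lemma~\ref{smooth-E-B}, which uses a second-order Taylor expansion tested against arbitrary $e' \in B^\circ$ together with the bipolar theorem to promote $E$-smoothness to $E_B$-smoothness. In the inductive case the same issue appears in $(ii)\Rightarrow(iv)$, since membership in $\mathcal{Q}^{M,r}_{\kappa_B}(\R^d; E_{A_B})$ already presupposes $\gamma_e \in C^\infty(\R^d; E_{A_B})$.

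A minor point: sequential completeness of $E$ is not assumed in this proposition and is not needed; the construction of $B$ and the argument of Lemma~\ref{smooth-E-B} work for any lcHs.
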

The ensuing lemma will be employed in the proof of Proposition \ref{char-E*}.
\begin{lemma}\label{smooth-E-B}
Let ${\bf{f}} \in C^\infty(\R^d;E)$ and suppose that there is  $B \in \mathfrak{B}(E)$ such that for every $K \Subset \R^d$ and $N \in \N$ there is $R > 0$ such that
$$
\left \{ \partial^{\alpha}{\bf{f}}(x) \, | \,  x \in K,\, |\alpha| \leq N \right\} \subseteq R B.
$$
Then, ${\bf{f}} \in C^\infty(\R^d;E_B)$.
\end{lemma}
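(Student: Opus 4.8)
The plan is to reduce all the required $E_B$-estimates to scalar computations by describing the norm of $E_B$ through the polar of $B$. Since $B$ is absolutely convex and closed in $E$, the bipolar theorem gives $B^{\circ\circ} = B$, so that for every $y \in E_B$ the Minkowski functional of $B$ may be written as
$$
\|y\|_{E_B} = \sup_{e' \in B^{\circ}} |\langle y, e'\rangle|, \qquad B^{\circ} = \{ e' \in E' \, | \, |\langle y, e'\rangle| \leq 1, \ \forall y \in B \},
$$
where $E'$ denotes the topological dual of $E$. First I would record that ${\bf f}$ and each of its $E$-valued derivatives $\partial^{\alpha}{\bf f}$ take values in $E_B$: applying the hypothesis with the compact set $K = \{x\}$ and any $N \geq |\alpha|$ yields $\partial^{\alpha}{\bf f}(x) \in RB \subseteq E_B$. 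The remaining task is to upgrade continuity and differentiability from the $E$-topology to the (generally strictly finer) norm topology of $E_B$, and here the displayed formula lets me test against the functionals $e' \in B^{\circ}$ only.

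For continuity at a point $x_0$, fix a compact neighbourhood $K$ of $x_0$ and let $R > 0$ be such that $\partial^{\beta}{\bf f}(K) \subseteq RB$ for $|\beta| \leq 1$ (hypothesis with $N = 1$). For $x \in K$ and $e' \in B^{\circ}$, the scalar function $t \mapsto \langle {\bf f}(x_0 + t(x - x_0)), e'\rangle$ is $C^1$, so the mean value theorem together with $|\langle \partial^{\beta}{\bf f}(y), e'\rangle| \leq R$ for $y \in K$, $|\beta| = 1$, gives $|\langle {\bf f}(x) - {\bf f}(x_0), e'\rangle| \leq R\,|x - x_0|$. Taking the supremum over $e' \in B^{\circ}$ yields $\|{\bf f}(x) - {\bf f}(x_0)\|_{E_B} \leq R\,|x - x_0|$, so ${\bf f}$ is continuous into $E_B$. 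For differentiability, fix $x$, a direction $e_j$, and $R > 0$ with $\partial^{\beta}{\bf f} \in RB$ on a compact neighbourhood of $x$ for $|\beta| \leq 2$ (hypothesis with $N = 2$). Applying the scalar fundamental theorem of calculus twice to $u \mapsto \langle \partial_j {\bf f}(x + u e_j), e'\rangle$ and estimating its derivative by $R$ as before, I obtain, for all small $s$ and all $e' \in B^{\circ}$,
$$
\left| \left\langle \frac{{\bf f}(x + s e_j) - {\bf f}(x)}{s} - \partial_j {\bf f}(x),\, e' \right\rangle \right| \leq R\,|s|,
$$
whence $\|\,({\bf f}(x + s e_j) - {\bf f}(x))/s - \partial_j {\bf f}(x)\,\|_{E_B} \leq R\,|s| \to 0$. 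Thus the difference quotient converges in the norm of $E_B$ to $\partial_j {\bf f}(x) \in E_B$, so the $E$-derivative is simultaneously the $E_B$-derivative.

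Finally I would iterate. Each $\partial^{\alpha}{\bf f}$ again lies in $C^{\infty}(\R^d; E)$ and satisfies the hypothesis of the lemma with the \emph{same} $B$, since its $E$-derivatives are among the $\partial^{\gamma}{\bf f}$. Hence the two preceding steps apply to every $\partial^{\alpha}{\bf f}$, and a straightforward induction on $N$ shows that for each $N$ the derivatives $(\partial^{\alpha}{\bf f})_{|\alpha| \leq N}$ exist as continuous $E_B$-valued functions and coincide with the $E$-valued ones; letting $N \to \infty$ gives ${\bf f} \in C^{\infty}(\R^d; E_B)$. The only genuine obstacle is conceptual rather than computational: the norm topology of $E_B$ is in general strictly finer than the topology induced from $E$, so $E$-smoothness by itself says nothing about $E_B$-smoothness. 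The boundedness of the derivatives is precisely what converts the scalar Taylor estimates into uniform norm bounds through the polar description of $\|\cdot\|_{E_B}$, and this device has the added benefit of avoiding any vector-valued integration and any completeness assumption on $E$.
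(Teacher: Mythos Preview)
Your proof is correct and follows essentially the same strategy as the paper: express $\|\cdot\|_{E_B}$ via the polar $B^\circ$ through the bipolar theorem and then use scalar Taylor-type estimates, uniform over $e'\in B^\circ$, to obtain the required $E_B$-convergence of difference quotients. The only difference is organizational---the paper handles the differentiability of every $\partial^{\alpha}{\bf f}$ in a single stroke via the second-order Taylor expansion of $x\mapsto\langle e',\partial^{\alpha}{\bf f}(x)\rangle$, whereas you separate a continuity step and a directional-differentiability step before iterating.
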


\begin{proof}
For $e' \in E'$ we write $f_{e'} = \langle e', {\bf{f}}\rangle \in C^\infty(\R^d)$. Let $x_0 \in \R^d$ and $\alpha \in \N^d$ be arbitrary. By applying the second-order Taylor theorem to $\partial^\alpha f_{e'}$, we obtain that
\begin{multline*}
\frac{1}{|x-x_0|}\left|\partial^{\alpha}f_{e'}(x) - \partial^{\alpha}f_{e'}(x_0) - \sum_{j=1}^d (x_j -x_{0,j})\partial^{\alpha+e_j}f_{e'}(x_0) \right | \\
\leq d^2|x-x_0| \max_{|\beta| = |\alpha| + 2} \sup_{|y-x_0| \leq 1}|\partial^{\beta}f_{e'}(y)|
\end{multline*}
for all $0 < |x - x_0| \leq 1$ and $e' \in E'$. Choose $R > 0$ such that
$$
\left \{ \partial^{\beta}{\bf{f}}(y) \, | \, |x_0 - y| \leq 1,\, |\beta| = |\alpha| + 2 \right\} \subseteq R B.
$$
Since $\partial^{\alpha}f_{e'}= \langle e', \partial^{\alpha}{\bf{f}}\rangle$, we infer that
$$
\left |\left \langle e', \frac{1}{|x-x_0|}\left(\partial^{\alpha}{\bf{f}}(x) - \partial^{\alpha}{\bf{f}}(x_0) - \sum_{j=1}^d (x_j -x_{0,j})\partial^{\alpha+e_j}{\bf{f}}(x_0)\right) \right \rangle \right|  \leq d^2R|x-x_0|
$$
for all $e' \in B^\circ$. The bipolar theorem implies that
$$
\frac{1}{|x-x_0|}\left \| \partial^{\alpha}{\bf{f}}(x) - \partial^{\alpha}{\bf{f}}(x_0) - \sum_{j=1}^d (x_j -x_{0,j})\partial^{\alpha+e_j}{\bf{f}}(x_0) \right \|_{E_B} \leq  d^2R|x-x_0|,
$$
whence ${\bf{f}} \in C^\infty(\R^d;E_B)$.
\end{proof}

\begin{proof}[Proof of Proposition \ref{char-E*}] $(i) \Rightarrow (ii)$ This follows from Lemma \ref{klemma}.

 $(ii) \Leftrightarrow (iii)$ Obvious.

 For the remaining equivalences we distinguish two cases. Suppose first that $(\pi,E)$ is a projective generalized proto-Banach representation.

 $(iii) \Rightarrow (iv)$ Obvious.

 $(iv) \Rightarrow (i)$ We only show the Beurling case as the Roumieu case is similar. The set
$$
B' = \left \{ \frac{\prod_{j=0}^{|\alpha|}r_j\partial^{\alpha}\gamma_e(x)e^{-|x|^2}}{M_\alpha} \, | \, x \in \R^d,\, \alpha \in \N^d, e\in\widetilde{B}  \right\}
$$
is bounded in $E$. Let $B$ be the closed absolutely convex hull of $B'$. Lemma \ref{smooth-E-B} yields that $\mathbf{B} \subset C^\infty(\R^d;E_B)$. It is then clear from the definition of $B'$ that $\{\gamma_e  \, | \, e \in  \widetilde{B} \}$ is contained and bounded in $\mathcal{E}^{(M)}(\R^d; E_B)$.

Next, assume that $(\pi,E)$ is an inductive  generalized proto-Banach representation.

 $(ii) \Rightarrow (iv)$ This follows from Lemma \ref{smooth-E-B}.

 $(iv) \Rightarrow (i)$ Obvious.
\end{proof}

We end this section with two remarks.

\begin{remark}\label{char-E*-Frechet}
Let $E$ be a  Fr\'echet space. In the Beurling case, the conditions from Proposition \ref{char-E*} are also equivalent to: For all $h> 0$ and $p \in \csn(E)$ it holds that
$$
\sup_{e\in\widetilde{B}}\sup_{\alpha \in \N^d} \frac{h^{|\alpha|}p(e_\alpha)}{M_\alpha} < \infty.
$$
The above condition is equivalent to condition $(iii)$ from Proposition \ref{char-E*} because of Lemma \ref{klemma} and the following result.
\end{remark}
\begin{lemma}
Let $r^{(n)} \in \mathfrak{R}$ for all $n \in \N$. There is $r \in \mathfrak{R}$ such that for every $n\in\N$ there is $k \in\Z_+$ such that $r_j \leq r^{(n)}_j$ for all $j\geq k$.
\end{lemma}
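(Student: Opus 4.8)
The plan is to build $r$ by a diagonal procedure against the \emph{pointwise minima} of the given sequences. First I would set, for each $n \in \N$,
\[
s^{(n)}_j = \min_{0 \le i \le n} r^{(i)}_j, \qquad j \in \N.
\]
Being the minimum of finitely many elements of $\mathfrak{R}$, each $s^{(n)}$ again belongs to $\mathfrak{R}$: a minimum of non-decreasing sequences is non-decreasing, $s^{(n)}_0 = s^{(n)}_1 = 1$, and a minimum of finitely many sequences diverging to $\infty$ still diverges, so $s^{(n)}_j \to \infty$. The extra feature I would record is that $s^{(n)}_j$ is non-increasing in the upper index $n$ for every fixed $j$, i.e.\ $s^{(0)} \succeq s^{(1)} \succeq s^{(2)} \succeq \cdots$. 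Since $r^{(n)} \succeq s^{(n)}$ for every $n$, it suffices to produce a single $r \in \mathfrak{R}$ with $r_j \le s^{(n)}_j$ for all large $j$ and every $n$.

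Next I would diagonalize. Using that $s^{(n)}_j \to \infty$ as $j \to \infty$, I can choose indices $2 \le k_1 < k_2 < k_3 < \cdots$ recursively (taking each $k_n > k_{n-1}$ large enough) so that
\[
s^{(n)}_j \ge n \qquad \text{for all } j \ge k_n.
\]
I then define $r$ to be the non-decreasing step sequence that equals $1$ on $\{0,\dots,k_1-1\}$ and equals $n$ on $\{k_n,\dots,k_{n+1}-1\}$ for every $n \ge 1$. By construction $r$ is non-decreasing with integer values increasing to $\infty$, and $r_0 = r_1 = 1$ because $k_1 \ge 2$; hence $r \in \mathfrak{R}$.

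Finally I would verify the domination. Fix $m \in \N$ (we may assume $m \ge 1$, the case $m = 0$ being identical with $k = k_1$) and take any $j \ge k_m$. Writing $n$ for the block index with $k_n \le j < k_{n+1}$, the inequality $j \ge k_m$ forces $n \ge m$, so
\[
r_j = n \le s^{(n)}_j \le s^{(m)}_j \le r^{(m)}_j ,
\]
where the first inequality is the defining property of $k_n$ (as $j \ge k_n$), the second is the monotonicity of $s^{(\cdot)}_j$ in its upper index (as $m \le n$), and the last is $s^{(m)}_j = \min_{i \le m} r^{(i)}_j \le r^{(m)}_j$. Thus $r_j \le r^{(m)}_j$ for all $j \ge k_m$, which is exactly the asserted property with $k = k_m$. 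The only delicate point is the choice of the thresholds $k_n$: they must be pushed out far enough that $r$ can keep climbing to $\infty$, yet the climb to level $n$ may only be permitted once $s^{(n)}$ — and hence, by the monotonicity in the upper index, every $s^{(m)}$ with $m \le n$ — has already surpassed $n$. The decreasing nesting of the $s^{(n)}$ is precisely what reconciles these two opposing demands and lets a single $r$ work for all $n$ simultaneously.
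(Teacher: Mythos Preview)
Your proof is correct and follows essentially the same diagonal strategy as the paper: both arguments introduce the pointwise minima $\min_{0\le l\le n} r^{(l)}_j$, pick thresholds $k_n$ (respectively $j_k$) past which these minima exceed a value growing to infinity, and define $r$ as a non-decreasing step function on the resulting blocks. The only cosmetic difference is that you take the step heights to be the integers $n$, whereas the paper uses the sampled values $\min_{0\le l\le k} r^{(l)}_{j_k}$; your choice is slightly cleaner but the underlying idea is identical.
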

\begin{proof}
We inductively define a sequence $(j_k)_{k \in \Z_+}$ of natural numbers with $j_1 = 1$ satisfying
$
j_{k+1} > j_k,$ $r^{(k+1)}_{j_{k+1}} \geq r^{(k)}_{{j_k}},$ and $\min_{0 \leq l \leq k+1}r^{(l)}_{j_{k+1}} \geq k+1,$
for all $k \in \Z_+$. We set $r_j = 1$ for $j=0,\ldots,j_2-1$, and
$
r_j = \min_{0 \leq l \leq k} r^{(l)}_{j_k},$ if  $j_k \leq j < j_{k+1}$ for some $k\geq 2$.
Then, $r$ belongs to $\mathfrak{R}$ and satisfies the conclusion of the lemma.
\end{proof}
\begin{remark}\label{Banach-top}
%Next, we endow $E^{[M]}$ with a locally convex topology in case $E$ is a Banach space. For $h>0$ we define
%$E^{M,h}$ as the Banach space consisting of all $e \in E^\infty$ such that
%$$
%\| e\|_{E^{M,h}} := \sup_{\alpha \in \N^d} \frac{h^{|\alpha|}\|e_\alpha\|_{E}}{M_\alpha} < \infty.
%$$
%By Proposition \ref{char-E*}, we have that
If $(\pi,E)$ is a Banach representation, the space $E^{[M]}$ may be naturally endowed with a locally convex topology in the following way (cf.\ Proposition \ref{char-E*} and Remark \ref{char-E*-Frechet}):
$$
E^{(M)} = \varprojlim_{h \to \infty} E^{M,h} \qquad \mbox{and} \qquad E^{\{M\}} = \varinjlim_{h \to 0^+}  E^{M,h},
$$
where $E^{M,h}$ denotes the Banach space consisting of all $e \in E^\infty$ such that
$$
 \sup_{\alpha \in \N^d} \frac{h^{|\alpha|}\|e_\alpha\|_{E}}{M_\alpha} < \infty.
$$
We denote this topology by $\tau$. By using a similar technique as in the proof of \cite[Proposition 5.1]{D-P-P-V}, it can be shown that the $(LB)$-space $(E^{\{M\}}, \tau)$ is boundedly retractive, i.e., for every bounded set $\widetilde{B} \subset (E^{\{M\}}, \tau)$ there is $h>0$ such that $\widetilde{B}$ is a bounded subset of $E^{M,h}$ and the topology induced on $\widetilde{B}$ by $E^{M,h}$ coincides with the one induced by $\tau$. In particular, the bornology induced by $\tau$ on $E^{\{M\}}$ coincides with the original bornology defined on $E^{\{M\}}$. Similarly, for every bounded set $\widetilde{B} \subset E^{(M)}$  there is $r\in\mathfrak{R}$  such that, for  $N = (M_p/\prod_{j=0}^pr_j)_{p \in \N}$, $\widetilde{B}$ is a bounded subset of $E^{N,1}$ and the topology induced on $\widetilde{B}$ by $E^{N,1}$ coincides with the one induced by $\tau$. In particular, the bornology induced by $\tau$ on $E^{(M)}$ coincides with the original bornology defined on $E^{(M)}$.
\end{remark}

\section{On a class of Fourier multipliers}\label{Fmultipliers}
In this section we discuss the space of Fourier multipliers associated to  $\mathcal{K}^{(M)}(\R^d)$  and show how these operators may be defined on  the spaces $\mathcal{Q}^{M,h}_{{\boldsymbol{\kappa}}}(\R^d;E)$ from the previous section by suitably interpreting them as convolution operators.

 Throughout this section, we fix a sequentially complete lcHs $E$, a net $\boldsymbol \kappa = (\kappa_p)_{p \in \csn(E)}$ of positive numbers and a weight sequence $M$ satisfying $(M.2)$.  We denote by $C_0,H$  the constants occurring in the latter condition. In addition, we always assume in this section that $\mathcal{K}^{(M)}(\mathbb{R}^{d})$ is non-trivial\footnote{The non-triviality of $\mathcal{K}^{(M)}(\mathbb{R}^{d})$ can be characterized in terms of the behavior of the weight sequence in a precise fashion. Indeed \cite[Proposition 2.7, Proposition 4.4, and Theorem 5.9]{D-V2018},  under condition $(M.2)$, we have $\mathcal{K}^{(M)}(\mathbb{R}^{d})\neq \{0\}$ if and only if $\lim_{p\to\infty} m_p/\log p=\infty$. The latter certainly holds if the sequence satisfies $(M.2)^*$ because it implies the stronger relation $p^{\sigma}=O(m_p)$ for some $\sigma>0$. }. Given $L > 0$, we consider the tube domain  $V_L = \R^d + i B(0,L)$.

 For $h > 0$  we define the Fr\'echet space
$$
\mathcal{U}^{M,h}(\C^d) = \{ \varphi \in \mathcal{O}(\C^d) \, | \,  \sup_{z \in  V_n} |\varphi(z)|e^{\nu_M( h\operatorname{Re} z)} < \infty, \quad \forall n \in \N \}.
$$
We set
$$
\mathcal{U}^{(M)}(\C^d) = \varprojlim_{h \to \infty} \mathcal{U}^{M,h}(\C^d)\qquad \mbox{and} \qquad \mathcal{U}^{\{M\}}(\C^d) = \varinjlim_{h \to 0^+} \mathcal{U}^{M,h}(\C^d).
$$
The Fourier transform is a topological isomorphism  from $\mathcal{K}^{[M]}(\R^d)$ onto $\mathcal{U}^{[M]}(\C^d)$, where we fix the constants in the Fourier transform as follows
$$
\mathcal{F}(\varphi)(\xi) = \widehat{\varphi}(\xi) = \int_{\R^d} \varphi(x) e^{-2\pi i x \cdot \xi} \dx.
$$

We now introduce spaces of multipliers and convolutors associated to $\mathcal{U}^{(M)}(\C^d)$ and $\mathcal{K}^{(M)}(\R^d)$, respectively.  For $h > 0$ we define the Fr\'echet space
$$
\mathcal{P}^{M,h}(\C^d) = \{ \varphi \in \mathcal{O}(\C^d) \, | \, \sup_{z \in V_n}  |\varphi(z)|e^{-\nu_M( h\operatorname{Re} z)} < \infty, \quad \forall n \in \N \}.
$$
For $P \in \mathcal{P}^{M,h}(\C^d)$ fixed, the multiplication operator $\mathcal{U}^{(M)}(\C^d) \rightarrow \mathcal{U}^{(M)}(\C^d): \varphi \mapsto P \cdot \varphi$ is  continuous.  Next, for $h > 0$ we define the $(LB)$-space
$$
\mathcal{O}^{M,h}_C(\R^d) = \varinjlim_{n \in \N} \mathcal{O}^{M,h,n}_C(\R^d),
$$
where $\mathcal{O}^{M,h,n}_C(\R^d)$ stands for  the Banach space consisting of all $\varphi \in C^\infty(\R^d)$ such that
$$
 \sup_{\alpha \in \N^d} \sup_{x \in \R^d} \frac{h^{|\alpha|}|\partial^\alpha\varphi(x)|e^{-n|x|}}{M_\alpha} < \infty.
$$
We denote by $\mathcal{O}'^{M,h}_C(\R^d)$ the strong dual of $\mathcal{O}^{M,h}_C(\R^d)$. For $g \in \mathcal{O}'^{M,h}_C(\R^d)$ fixed, the convolution operator
$$
\mathcal{K}^{(M)}(\R^d) \rightarrow \mathcal{K}^{(M)}(\R^d): \varphi \mapsto g \ast \varphi, \qquad (g \ast \varphi) (x) = \langle g(t), \varphi(x-t) \rangle
$$
is continuous,
where the duality on the right is in the sense of $\langle \mathcal{O}'^{M,h}_C(\R^d), \mathcal{O}^{M,h}_C(\R^d)\rangle$.
 For each $h>0$, $\mathcal{K}^{\{M\}}(\R^d)$ can be viewed as a subspace of $\mathcal{O}'^{M,h}_C(\R^d)$ via the linear injection
\beq\label{inc-k-o}
\mathcal{K}^{\{M\}}(\R^d)\rightarrow \mathcal{O}'^{M,h}_C(\R^d):\,\, \varphi\mapsto \left(\psi\mapsto\int_{\R^d}\varphi(x)\psi(x)\dx\right).
\eeq

We also need spaces of $E$-valued ultradistributions. Set
$$
\mathcal{K}'^{(M)}(\R^d;E) = \mathcal{L}(\mathcal{K}^{(M)}(\R^d), E) \qquad \mbox{and} \qquad \mathcal{U}'^{(M)}(\C^d;E) = \mathcal{L}(\mathcal{U}^{(M)}(\C^d), E).
$$
We define the Fourier transform from $\mathcal{U}'^{(M)}(\C^d;E)$ onto $\mathcal{K}'^{(M)}(\R^d;E)$ via duality. Let $h> 0$. For $P \in \mathcal{P}^{M,h}(\C^d)$ fixed, we define
$$
\langle P \cdot {\bf{f}}, \varphi \rangle = \langle {\bf{f}}, P \cdot \varphi \rangle, \qquad  \varphi \in \mathcal{U}^{(M)}(\C^d),
 $$
for  ${\bf{f}} \in \mathcal{U}'^{(M)}(\C^d;E)$. Similarly, for $g \in \mathcal{O}'^{M,h}_C(\R^d)$ fixed, we define
 $$
\langle g \ast {\bf{f}}, \varphi \rangle = \langle {\bf{f}}, (g \ast \check{\varphi})\check{\phantom{n}} \rangle, \qquad \varphi \in \mathcal{K}^{(M)}(\R^d),
 $$
 for ${\bf{f}} \in \mathcal{K}'^{(M)}(\R^d;E)$.
\begin{proposition}\label{quantization}
For each $h > 0$ there exists a continuous linear mapping $\widetilde{\mathcal{F}}_h :  \mathcal{P}^{M,h}(\C^d) \rightarrow \mathcal{O}'^{M,2\sqrt{d}Hh/\pi}_C(\R^d)$ such that
$$
\mathcal{F}(P \cdot {\bf{f}}) = \widetilde{\mathcal{F}}_h(P) \ast \mathcal{F}({\bf{f}}), \qquad P \in \mathcal{P}^{M,h}(\C^d), {\bf{f}} \in \mathcal{U}'^{(M)}(\C^d;E).
$$
Moreover, for all $0 < k < h$,
\begin{center}
\begin{tikzpicture}
  \matrix (m) [matrix of math nodes,row sep={2em}, column sep={5em}]
  {
  \mathcal{U}^{\{M\}}(\C^d) & \mathcal{P}^{M,k}(\C^d) & \mathcal{P}^{M,h}(\C^d)\\
   & \mathcal{O}'^{M,2\sqrt{d}Hk/\pi}_C(\R^d) & \\
   \mathcal{K}^{\{M\}}(\R^d) & & \mathcal{O}'^{M,2\sqrt{d}Hh/\pi}(\R^d)\\
   };
   \draw[->] (m-1-1) -- (m-1-2) node[midway,above]{can. inclusion};
   \draw[->] (m-1-2) -- (m-1-3) node[midway,above]{can. inclusion};
   \draw[->] (m-1-2) -- (m-2-2) node[midway,left]{$\widetilde{\mathcal{F}}_k$};
   \draw[->] (m-1-3) -- (m-3-3) node[midway,right]{$\widetilde{\mathcal{F}}_h$};
  \draw[->] (m-1-1) -- (m-3-1) node[midway,left]{$\mathcal{F}$};
  \draw[->] (m-3-1) -- (m-2-2) node[midway, above]{inc. \eqref{inc-k-o}${}\quad\quad$};
  \draw[->] (m-3-1) -- (m-3-3) node[midway, above]{inc. \eqref{inc-k-o}};
  \draw[->] (m-2-2) -- (m-3-3) node[near start,above]{${}\quad\quad\quad$ transpose of} node[near end, above]{$\quad\quad$ can. incl.};
\end{tikzpicture}
\end{center}
is a commutative diagram of continuous maps.
\end{proposition}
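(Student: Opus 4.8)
\emph{Strategy.} The object $\widetilde{\mathcal{F}}_h(P)$ is meant to be the (forward) Fourier transform $\mathcal{F}P$ of the symbol $P$, realized as a convolutor on the physical side; the asserted identity is then nothing but the $E$-valued convolution theorem, transposed from the scalar one. The plan is therefore: (1) define $g:=\widetilde{\mathcal{F}}_h(P)$ by its action on the small test space $\mathcal{K}^{(M)}(\R^d)$, where everything converges classically; (2) prove the quantitative estimate that places $g$ in $\mathcal{O}'^{M,2\sqrt{d}Hh/\pi}_C(\R^d)$ and makes $P\mapsto g$ continuous; (3) lift the multiplier identity to $\mathcal{U}'^{(M)}(\C^d;E)$ by transposition; (4) read the commutative diagram off the Parseval relation. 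For $\varphi\in\mathcal{K}^{(M)}(\R^d)$ its transform $\widehat\varphi\in\mathcal{U}^{(M)}(\C^d)$ satisfies $|\widehat\varphi(\xi)|\le C_k e^{-\nu_M(k|\xi|)}$ on $\R^d$ for \emph{every} $k>0$, while $|P(\xi)|\le C e^{\nu_M(h|\xi|)}$; choosing $k>h$ shows that
\[
\langle g,\varphi\rangle := \int_{\R^d} P(\xi)\,\widehat\varphi(\xi)\,\dxi
\]
converges absolutely, so that $g=\mathcal{F}P$ is well defined on $\mathcal{K}^{(M)}(\R^d)$.

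\emph{The key estimate (main obstacle).} The crux is to show that $g$ is a convolutor of \emph{exactly} the right class, i.e.\ that it extends to a continuous functional on the larger, exponentially growing test space $\mathcal{O}^{M,h'}_C(\R^d)$ with $h'=2\sqrt{d}Hh/\pi$, with bounds linear in $P$. For each $n$ I would shift the contour in the defining integral to $\R^d+i\eta$ with $|\eta|\sim n$ — legitimate since $P$ and $\widehat\varphi$ are entire — and estimate $\widehat\varphi$ on this contour by a Paley--Wiener bound expressed through the seminorm $\|\varphi\|_{\mathcal{O}^{M,h',n}_C}$. Combining the growth $e^{\nu_M(h\operatorname{Re}z)}$ of $P$ on $V_{|\eta|}$ with this decay and absorbing the loss through the $(M.2)$-inequality $2\nu_M(t)\le\nu_M(Ht)+\log C_0$ would yield $|\langle g,\varphi\rangle|\le C_n\|\varphi\|_{\mathcal{O}^{M,h',n}_C}$, where $C_n$ is controlled by the $\mathcal{P}^{M,h}$-seminorm of $P$ on $V_{|\eta|}$; the factor $2\sqrt{d}/\pi$ enters precisely here, from the $2\pi$ in the Fourier kernel and from passing between $|\zeta|$ and the radial $\nu_M$. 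The genuinely delicate points, which I expect to be the hardest, are the sharp bookkeeping of the constant and the passage from $\mathcal{K}^{(M)}(\R^d)$ to the full space $\mathcal{O}^{M,h'}_C(\R^d)$ (the Fourier transform of an exponentially growing function is not a function), which should be handled through the Fourier--Laplace characterization of the convolutor space $\mathcal{O}'^{M,h'}_C(\R^d)$ rather than by naive density.

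\emph{The multiplier identity.} Once $g=\mathcal{F}P$ is in place, the $E$-valued relation follows by transposition. For $\varphi\in\mathcal{K}^{(M)}(\R^d)$ and $\mathbf{f}\in\mathcal{U}'^{(M)}(\C^d;E)$ the definitions of the duality Fourier transform and of $P\cdot\mathbf{f}$ give $\langle\mathcal{F}(P\cdot\mathbf{f}),\varphi\rangle=\langle\mathbf{f},P\cdot\mathcal{F}\varphi\rangle$, while the definition of $g\ast(\cdot)$ gives $\langle g\ast\mathcal{F}\mathbf{f},\varphi\rangle=\langle\mathbf{f},\mathcal{F}[(g\ast\check\varphi)\check{\phantom{n}}]\rangle$. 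It therefore suffices to check the scalar identity $\mathcal{F}[(g\ast\check\varphi)\check{\phantom{n}}]=P\cdot\mathcal{F}\varphi$, which reduces to the ordinary convolution theorem: writing $g\ast\check\varphi=\mathcal{F}P\ast\check\varphi=\mathcal{F}(P\cdot\mathcal{F}^{-1}\check\varphi)=\mathcal{F}(P\cdot\widehat\varphi)$ and reflecting, one obtains $(g\ast\check\varphi)\check{\phantom{n}}=\mathcal{F}^{-1}(P\cdot\widehat\varphi)$, whence $\mathcal{F}[(g\ast\check\varphi)\check{\phantom{n}}]=P\cdot\widehat\varphi=P\cdot\mathcal{F}\varphi$, as required.

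\emph{The diagram.} For $0<k<h$ and $\varphi\in\mathcal{U}^{\{M\}}(\C^d)$, decay makes $\varphi$ a member of $\mathcal{P}^{M,k}(\C^d)$, and $\widetilde{\mathcal{F}}_k(\varphi)$ is given by $\chi\mapsto\int_{\R^d}\varphi(\xi)\widehat\chi(\xi)\,\dxi$; by the multiplication formula $\int\varphi\,\widehat\chi=\int(\mathcal{F}\varphi)\,\chi$ this is exactly the image of $\mathcal{F}\varphi$ under the inclusion \eqref{inc-k-o}, so the left--bottom cell commutes (here $\varphi$ is a genuine rapidly decreasing function, so this is the classical convolution theorem). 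Finally, for $P\in\mathcal{P}^{M,k}(\C^d)\subset\mathcal{P}^{M,h}(\C^d)$ the functionals $\widetilde{\mathcal{F}}_k(P)$ and $\widetilde{\mathcal{F}}_h(P)$ are defined by the same integral and differ only in the test space on which they are regarded; thus $\widetilde{\mathcal{F}}_h(P)$ is the restriction of $\widetilde{\mathcal{F}}_k(P)$ along $\mathcal{O}^{M,2\sqrt{d}Hh/\pi}_C(\R^d)\hookrightarrow\mathcal{O}^{M,2\sqrt{d}Hk/\pi}_C(\R^d)$, which is precisely the transpose of the canonical inclusion. Hence every cell of the diagram commutes, and all maps involved are continuous by the estimate of the second step together with the continuity statements recorded before the proposition.
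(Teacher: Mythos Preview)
Your strategy is sound in outline, and your handling of the multiplier identity and of the diagram is essentially the same as the paper's. The gap lies exactly where you flag it yourself: the extension of $g=\mathcal{F}P$ from $\mathcal{K}^{(M)}(\R^d)$ to the exponentially growing test space $\mathcal{O}^{M,h'}_C(\R^d)$. Your proposed mechanism --- shift the contour to $\R^d+i\eta$ and bound $\widehat\varphi(\xi+i\eta)$ by a Paley--Wiener estimate \emph{in terms of} $\|\varphi\|_{\mathcal{O}^{M,h',n}_C}$ --- does not work as stated: elements of $\mathcal{O}^{M,h',n}_C(\R^d)$ grow like $e^{n|x|}$, so their Fourier--Laplace transforms are not functions on any tube, and for $\varphi\in\mathcal{K}^{(M)}(\R^d)$ the bounds one obtains for $\widehat\varphi(\xi+i\eta)$ by integration by parts are naturally controlled by the $\mathcal{K}^{(M)}$ seminorms, not by $\|\varphi\|_{\mathcal{O}^{M,h',n}_C}$. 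You then defer the issue to an unspecified ``Fourier--Laplace characterization of $\mathcal{O}'^{M,h'}_C(\R^d)$''; establishing such a characterization with the precise constant $2\sqrt{d}H/\pi$ is essentially the whole content of the proposition, so this is a genuine missing step rather than a routine detail.

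The paper circumvents this obstacle by a different device: it fixes windows $\psi,\gamma\in\mathcal{U}^{(M)}(\C^d)$ with $(\gamma,\psi)_{L^2}=1$ and \emph{defines} $\widetilde{\mathcal{F}}_h(P)$ directly on $\mathcal{O}^{M,2\sqrt{d}Hh/\pi}_C(\R^d)$ via the short-time Fourier transform,
\[
\langle \widetilde{\mathcal{F}}_h(P),\varphi\rangle=\iint_{\R^{2d}} V_\psi P(x,\xi)\,V_{\mathcal{F}^{-1}(\overline\gamma)}\varphi(\xi,x)\,e^{2\pi i x\cdot\xi}\,\dx\dxi.
\]
The point is that this integral converges absolutely for every $\varphi\in\mathcal{O}^{M,2\sqrt{d}Hh/\pi}_C(\R^d)$: Lemma~\ref{STFT-mult} gives $|V_\psi P(x,\xi)|\lesssim e^{\nu_M(2h|x|)-m|\xi|}$ for all $m$ and $|V_{\mathcal{F}^{-1}(\overline\gamma)}\varphi(\xi,x)|\lesssim e^{n|\xi|-\nu_M(2Hh|x|)}$ for some $n$, and $(M.2)$ makes the product integrable. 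No extension step or density argument is needed, and the constant $2\sqrt{d}H/\pi$ falls out of the two STFT bounds. The multiplier identity is then recovered from the STFT reconstruction formula (Lemma~\ref{reconstruction-P}) exactly as you do it by transposition. In short, your approach could perhaps be completed by a careful Taylor-coefficient or structure-theorem argument for $P$, but as written the key estimate is not established; the paper's STFT route is an elegant way to avoid the problem altogether.
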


The proof of Proposition \ref{quantization} is based on the mapping properties of the short-time Fourier transform \cite{Grochenig}. The translation and modulation operators are denoted by $T_xf(t) = f(t- x)$ and $M_\xi f(t) = e^{2\pi i \xi \cdot t} f(t)$, $x, \xi \in \R^d$.  The \emph{short-time Fourier transform} (STFT) of a function $f \in L^2(\R^d)$ with respect to a window function $\psi \in L^2(\R^d)$ is defined as
\begin{equation}
V_\psi f(x,\xi) = (f, M_\xi T_x \psi)_{L^2} =  \int_{\R^d} f(t) \overline{\psi(t-x)}e^{-2\pi i \xi \cdot t} \dt, \qquad (x, \xi) \in \R^{2d}.
\label{STFT-def}
\end{equation}
It holds that  $\|V_\psi f\|_{L^2} = \|\psi \|_{L^2}\|f\|_{L^2}$. Plancherel's theorem implies that
\begin{equation}
V_\psi f(x,\xi) = e^{-2\pi i x \cdot \xi}V_{\widehat{\psi}} \widehat{f}(\xi, -x).
\label{fundamental}
\end{equation}
%This formula, which is sometimes called the fundamental identity of time-frequency analysis, lies at the heart of the proof of Proposition \ref{quantization}.
The adjoint of $V_\psi$ is given by the %($L^2(\R^d)$-valued)
 weak integral
$$
V^\ast_\psi F =\iint_{\R^{2d}} F(x,\xi) M_\xi T_x\psi\: \dx \dxi, \qquad F \in L^2(\R^{2d}).
$$
If  $\gamma \in L^2(\R^d)$ is such that $(\gamma, \psi)_{L^2} \neq 0$, then the reconstruction formula
\begin{equation}
\frac{1}{(\gamma, \psi)_{L^2}} V^\ast_\gamma \circ V_\psi = \operatorname{id}_{L^2(\R^d)}
\label{reconstruction-L2}
\end{equation}
holds. We define the STFT of a function $f$ with respect to a window $\psi$ via the integral at the right-hand side of \eqref{STFT-def} whenever this integral makes sense.

Let $h > 0$. We define the Fr\'echet space
$$
C_{M,h; \operatorname{exp}}(\R^{2d}) = \{ f \in C(\R^{2d}) \, | \, \sup_{(x,\xi) \in \R^{2d}}|f(x,\xi)|e^{-\nu_M(hx) + n|\xi|}< \infty, \quad \forall n \in \N \}
$$
and the $(LB)$-space
$$
C^\circ_{\operatorname{exp}; M,h}(\R^{2d}) = \varinjlim_{n \in \N} C^\circ_{\operatorname{exp},n; M,h}(\R^{2d}),
$$
where $C^\circ_{\operatorname{exp},n; M,h}(\R^{2d})$ denotes the Banach space consisting of all $f \in C(\R^{2d})$ such that
$$
\sup_{(x,\xi) \in \R^{2d}}|f(x,\xi)|e^{-n|x| +\nu_M(h\xi)} < \infty.
$$
\begin{lemma} \label{STFT-mult} \mbox{}
\begin{itemize}
\item[$(i)$] Let $\psi \in \mathcal{U}^{(M)}(\C^d)$. For each $h > 0$ the mapping
$$
V_\psi : \mathcal{P}^{M,h}(\C^d) \rightarrow C_{M,2h; \operatorname{exp}}(\R^{2d})
$$
is continuous.
\item[$(ii)$] Let $\psi \in \mathcal{K}^{(M)}(\R^d)$. For each $h > 0$ the mapping
$$
V_\psi : \mathcal{O}^{M,h}_C(\R^d) \rightarrow C^\circ_{\operatorname{exp}; M,\pi h/\sqrt{d}}(\R^{2d})
$$
is continuous.
\end{itemize}
\end{lemma}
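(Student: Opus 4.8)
The plan is to treat (i) and (ii) separately, extracting the decay in the frequency variable $\xi$ from holomorphy in (i) (a Paley--Wiener type contour shift) and from ultradifferentiability in (ii) (integration by parts), and in both cases controlling the growth in $x$ by a convolution estimate for the associated function $\nu_M$. Throughout I write the STFT as in \eqref{STFT-def} and keep the notation $C_0,H$ for the constants occurring in $(M.2)$.

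For part (i), fix $\psi\in\mathcal{U}^{(M)}(\C^d)$ and $f\in\mathcal{P}^{M,h}(\C^d)$. The integrand $t\mapsto f(t)\overline{\psi(t-x)}e^{-2\pi i\xi\cdot t}$ extends to the entire function $z\mapsto f(z)\overline{\psi(\bar z-x)}e^{-2\pi i\xi\cdot z}$ of $z\in\C^d$ (note that $z\mapsto\overline{\psi(\bar z-x)}$ is holomorphic). Because $\psi$ decays like $e^{-\nu_M(\mu\operatorname{Re}z)}$ on every tube $V_\lambda$ for \emph{every} $\mu>0$, while $f$ only grows like $e^{\nu_M(h\operatorname{Re}z)}$, the integrand is dominated as $|\operatorname{Re}z|\to\infty$ once $\mu>h$, which legitimises shifting the contour from $\R^d$ to $\R^d+iy_0$ by Cauchy's theorem, one variable at a time. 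Choosing $y_0=-\lambda\,\xi/|\xi|$ (the case $\xi=0$ being trivial) produces the gain $e^{2\pi\xi\cdot y_0}=e^{-2\pi\lambda|\xi|}$, and on $V_\lambda$ the remaining factors are bounded by $|f(t+iy_0)|\le C_\lambda\,e^{\nu_M(h|t|)}$ and $|\psi(t-iy_0-x)|\le C'_{\mu,\lambda}\,e^{-\nu_M(\mu|t-x|)}$, where $C_\lambda$ is a constant times a seminorm of $f$, giving
\[
|V_\psi f(x,\xi)|\le C_\lambda C'_{\mu,\lambda}\,e^{-2\pi\lambda|\xi|}\int_{\R^d}e^{\nu_M(h|t|)-\nu_M(\mu|t-x|)}\dt .
\]
The heart of (i) is the last integral. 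Substituting $u=t-x$ and using $|u+x|\le|u|+|x|$ with the monotonicity of $\nu_M$ gives $\nu_M(h|u+x|)\le\nu_M(2h|u|)+\nu_M(2h|x|)$; taking $\mu=2hH$ and invoking $(M.2)$ in the form $2\nu_M(2h|u|)\le\nu_M(2hH|u|)+\log C_0$ turns the $u$-dependent exponent into $-\nu_M(2h|u|)+\log C_0$, which is integrable since $e^{-\nu_M(2h|u|)}$ decays faster than any power. Hence the integral is at most $C''e^{\nu_M(2h|x|)}$, so $|V_\psi f(x,\xi)|\le C_\lambda C'_{\mu,\lambda}C''\,e^{\nu_M(2h|x|)}e^{-2\pi\lambda|\xi|}$. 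As $\lambda>0$ is arbitrary, for each $n$ I pick $\lambda$ with $2\pi\lambda\ge n$, which bounds $|V_\psi f(x,\xi)|e^{-\nu_M(2hx)+n|\xi|}$ by a constant times a seminorm of $f$; this is exactly the continuity of $V_\psi\colon\mathcal{P}^{M,h}(\C^d)\to C_{M,2h;\operatorname{exp}}(\R^{2d})$.

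For part (ii), fix $\psi\in\mathcal{K}^{(M)}(\R^d)$ and $f\in\mathcal{O}^{M,h,n}_C(\R^d)$. Integration by parts (the boundary terms vanish because $\psi$ decays faster than $f$ grows) yields $(2\pi)^{|\alpha|}|\xi^\alpha|\,|V_\psi f(x,\xi)|\le\int_{\R^d}\bigl|\partial_t^\alpha\bigl(f(t)\overline{\psi(t-x)}\bigr)\bigr|\dt$ for every $\alpha$. Expanding by Leibniz, estimating $|\partial^\beta f|$ and $|\partial^{\alpha-\beta}\psi|$ by the defining seminorms with the \emph{same} parameter $h$ and frequency weight $n+1>n$, and using the log-convexity bound $M_\beta M_{\alpha-\beta}\le M_\alpha$, the binomial sum collapses to a factor $2^{|\alpha|}$, while the remaining integral satisfies $\int_{\R^d}e^{n|t|-(n+1)|t-x|}\dt\le c_d\,e^{n|x|}$. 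This gives $|\xi^\alpha|\,|V_\psi f(x,\xi)|\le C\,M_\alpha(\pi h)^{-|\alpha|}e^{n|x|}$, where $C$ is a constant times a seminorm of $\psi$ times $\|f\|_{\mathcal{O}^{M,h,n}_C}$. Choosing $\alpha=p\,e_j$ with $|\xi_j|\ge|\xi|/\sqrt d$ and taking the supremum over $p$ converts these polynomial bounds into $e^{\nu_M(\pi h|\xi|/\sqrt d)}|V_\psi f(x,\xi)|\le C\,e^{n|x|}$, i.e. $V_\psi f\in C^\circ_{\operatorname{exp},n;M,\pi h/\sqrt d}(\R^{2d})$ with norm controlled linearly by $\|f\|_{\mathcal{O}^{M,h,n}_C}$; the continuity of $V_\psi\colon\mathcal{O}^{M,h}_C(\R^d)\to C^\circ_{\operatorname{exp};M,\pi h/\sqrt d}(\R^{2d})$ then follows from the universal property of the inductive limit.

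The step I expect to be most delicate is the contour shift in (i): one must verify carefully that the vertical faces at infinity contribute nothing, which is precisely where the surplus decay of $\psi$ (available for all $\mu$ since $\psi\in\mathcal{U}^{(M)}$) over the fixed growth rate $h$ of $f$ is used, and one must ensure that the $\nu_M$-convolution costs only a dilation by the $(M.2)$-constant so as not to exceed the advertised rate $2h$. Part (ii) is comparatively routine once the choice of the \emph{same} parameter $h$ (together with $n+1$) is made for $\psi$, which is exactly what makes the Leibniz factor $2^{|\alpha|}$ fit the constant $\pi h/\sqrt d$.
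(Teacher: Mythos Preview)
Your argument is correct in both parts. In (i), the contour shift is legitimate because on any strip $V_\lambda$ the product $f(z)\overline{\psi(\bar z-x)}$ is dominated by $e^{\nu_M(h|\operatorname{Re}z|)-\nu_M(\mu|\operatorname{Re}z-x|)}$ with $\mu$ arbitrarily large, so the vertical faces at infinity vanish; the subsequent $\nu_M$--convolution estimate and the choice $\mu=2hH$ give precisely the rate $2h$. In (ii), the Leibniz expansion with the log-convexity inequality $M_\beta M_{\alpha-\beta}\le M_\alpha$, the choice of the same $h$ and weight $n+1$ for $\psi$, and the passage from $|\xi_j|^p$ to $(|\xi|/\sqrt d)^p$ yield exactly the asserted target space; continuity then follows step by step through the inductive limit.

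The paper proceeds quite differently: it does not estimate at all, but simply observes that $\mathcal{P}^{M,h}(\C^d)=\bigcap_n\mathcal{B}^{p!,n}_{e^{-\nu_M(h\,\cdot\,)}}(\R^d)$ and $\mathcal{O}^{M,h}_C(\R^d)=\bigcup_n\mathcal{B}^{M_p,h}_{e^{-n|\,\cdot\,|}}(\R^d)$ in the notation of \cite{D-V}, and then invokes \cite[Lemma~4.4]{D-V}, a general STFT mapping theorem for such weighted spaces. Your approach therefore unpacks what that cited lemma presumably does in this specific situation. The trade-off is clear: the paper's route is a one-line reduction but opaque without the reference, whereas your direct argument is self-contained and makes transparent \emph{why} the dilation factors $2h$ (from the subadditivity step $\nu_M(h|u+x|)\le\nu_M(2h|u|)+\nu_M(2h|x|)$) and $\pi h/\sqrt d$ (from the $2^{|\alpha|}$ in Leibniz and the coordinate selection $|\xi_j|\ge|\xi|/\sqrt d$) arise.
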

\begin{proof}
In the notation from \cite{D-V} (see also Subsection \ref{weighted}) we have that
$$
\mathcal{P}^{M,h}(\C^d) = \bigcap_{n \in \N} \mathcal{B}^{p!,n}_{e^{-\nu_M(h\,\cdot\,)}}(\R^d), \qquad \mathcal{O}^{M,h}_C(\R^d) = \bigcup_{n \in \N} \mathcal{B}^{M_p,h}_{e^{-n|\,\cdot\,|}}(\R^d).
$$
Hence, both statements follow from \cite[Lemma 4.4]{D-V}.
\end{proof}
%We also need the following variant of the formula \eqref{reconstruction-L2}.
\begin{lemma}\label{reconstruction-P}
Let $\psi, \gamma \in \mathcal{U}^{(M)}(\C^d)$ be such that $(\gamma, \psi)_{L^2} \neq 0$. Then,
$$
\int_{\R^d} P(t) \varphi(t) \dt = \frac{1}{(\gamma, \psi)_{L^2}}\iint_{\R^{2d}} V_\psi P(x,\xi) V_{\overline{\gamma}} \varphi(x,-\xi) \dx \dxi, \qquad  \varphi \in \mathcal{U}^{(M)}(\C^d).
$$
\end{lemma}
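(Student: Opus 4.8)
The plan is to recognize the claimed identity as the Parseval (orthogonality) relation for the short-time Fourier transform. Were $P$ square integrable, it would follow at once from the reconstruction formula \eqref{reconstruction-L2}: writing $\langle\,\cdot\,,\cdot\,\rangle_{L^2}$ for the Hermitian inner product and using the adjoint relation $\langle V^\ast_\gamma F, g\rangle_{L^2} = \langle F, V_\gamma g\rangle_{L^2}$, one would obtain
$$
\int_{\R^d} P(t)\varphi(t)\dt = \langle P, \overline\varphi\rangle_{L^2} = \frac{1}{(\gamma,\psi)_{L^2}}\langle V_\psi P, V_\gamma \overline\varphi\rangle_{L^2(\R^{2d})},
$$
and the integrand on the right equals $V_\psi P(x,\xi)\,\overline{V_\gamma\overline\varphi(x,\xi)} = V_\psi P(x,\xi)\,V_{\overline\gamma}\varphi(x,-\xi)$, the last identity being a direct computation from \eqref{STFT-def}. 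However, an element $P \in \mathcal{P}^{M,h}(\C^d)$ is in general not square integrable, so the $L^2$ reconstruction formula does not apply directly; I would instead establish the identity by a direct computation justified by Fubini's theorem.

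First I would insert the defining integrals \eqref{STFT-def}, namely $V_\psi P(x,\xi) = \int_{\R^d} P(s)\overline{\psi(s-x)}e^{-2\pi i s\cdot\xi}\,{\rm d}s$ and $V_{\overline\gamma}\varphi(x,-\xi) = \int_{\R^d}\varphi(t)\gamma(t-x)e^{2\pi i t\cdot\xi}\dt$, into the right-hand double integral. The crucial preliminary step is to verify absolute convergence so that Fubini's theorem applies. By Lemma \ref{STFT-mult}$(i)$ we have $V_\psi P \in C_{M,2h;\operatorname{exp}}(\R^{2d})$, that is, $|V_\psi P(x,\xi)| \leq C_n e^{\nu_M(2hx) - n|\xi|}$ for every $n\in\N$, while the superexponential decay of $\varphi$ and $\gamma$ yields (through integration by parts in $t$ for the $\xi$-decay) that $V_{\overline\gamma}\varphi(x,-\xi)$ decays faster than any $e^{-\nu_M(kx)}$ in $x$ and faster than any $e^{-m|\xi|}$ in $\xi$. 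Combined with condition $(M.2)$, these bounds make the product integrable over $\R^{2d}$.

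With absolute convergence secured, I would integrate in the order ${\rm d}\xi$ first and ${\rm d}x$ second. For fixed $x$, the inner integral
$$
\int_{\R^d} V_\psi P(x,\xi)\, V_{\overline\gamma}\varphi(x,-\xi)\dxi = \int_{\R^d}\widehat{a_x}(\xi)\,\overline{\widehat{b_x}(\xi)}\dxi,
$$
with $a_x(t) = P(t)\overline{\psi(t-x)}$ and $b_x(t) = \overline{\varphi(t)}\,\overline{\gamma(t-x)}$, is evaluated by Parseval's theorem on $\R^d$; this is legitimate because $a_x, b_x \in L^1(\R^d)\cap L^2(\R^d)$, the superexponential decay of $\psi,\varphi,\gamma$ dominating the $e^{\nu_M(h|\,\cdot\,|)}$ growth of $P$ (here $(M.2)$ is used once more). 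Parseval gives $\int_{\R^d} a_x(t)\overline{b_x(t)}\dt = \int_{\R^d} P(t)\varphi(t)\overline{\psi(t-x)}\gamma(t-x)\dt$. A final application of Fubini in $x$ then produces
$$
\iint_{\R^{2d}} V_\psi P(x,\xi)\, V_{\overline\gamma}\varphi(x,-\xi)\dx\dxi = \int_{\R^d} P(t)\varphi(t)\left(\int_{\R^d}\overline{\psi(t-x)}\gamma(t-x)\dx\right)\dt,
$$
and since the inner $x$-integral equals $\int_{\R^d}\gamma(u)\overline{\psi(u)}\,{\rm d}u = (\gamma,\psi)_{L^2}$, dividing by $(\gamma,\psi)_{L^2}\neq 0$ yields the asserted formula.

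The main obstacle is precisely the non-square-integrability of $P$: it forbids the one-line derivation from \eqref{reconstruction-L2}, and the genuine work lies in justifying the interchange of integrations and the use of Parseval. Both rest on the quantitative matching of the controlled $e^{\nu_M(h\operatorname{Re}\,\cdot\,)}$ growth of $P$ (via Lemma \ref{STFT-mult}$(i)$) against the superexponential decay built into $\mathcal{U}^{(M)}(\C^d)$, with $(M.2)$ invoked to absorb the relevant sums of associated functions.
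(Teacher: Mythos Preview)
Your argument is correct and leads to the same identity, but the organization differs from the paper's. The paper applies the $L^2$ reconstruction formula \eqref{reconstruction-L2} to the test function $\varphi$ (which \emph{is} in $L^2$, unlike $P$), writes $\varphi(t) = (\gamma,\psi)_{L^2}^{-1}\iint V_{\overline\gamma}\varphi(x,\xi)\,M_\xi T_x\overline\psi(t)\,\dx\dxi$, substitutes this into $\int P(t)\varphi(t)\,\dt$, and performs a single Fubini swap over the resulting triple integral; the inner $t$-integral is then recognized as $V_\psi P(x,-\xi)$. The required absolute integrability is obtained from Lemma \ref{STFT-mult}$(i)$ together with the bound \eqref{STFT-test} on $V_{\overline\gamma}\varphi$, which the paper extracts from \cite[Lemma 4.4]{D-V} rather than via your integration-by-parts sketch. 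Your route instead starts from the right-hand side, applies Parseval in $\xi$ for each fixed $x$ (legitimate since $P\,\overline{\psi(\cdot-x)}\in L^2$), and then Fubini in $(x,t)$. Both approaches hinge on the same decay estimates; the paper's version is marginally shorter since it needs only one interchange of integrals and reuses \eqref{reconstruction-L2} directly, while yours is more self-contained and makes the orthogonality-relation structure explicit without invoking the adjoint $V^\ast$.
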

\begin{proof}
Let $\varphi \in \mathcal{U}^{(M)}(\C^d)$ be arbitrary. In the notation from \cite{D-V} we have that
$$
\mathcal{U}^{(M)}(\C^d) =  \bigcap_{n \in \N} \mathcal{B}^{p!,n}_{e^{\nu_M(n\,\cdot\,)}}(\R^d).
$$ Hence, \cite[Lemma 4.4]{D-V} implies that
\begin{equation}
\sup_{(x,\xi) \in \R^{2d}}|V_{\overline{\gamma}} \varphi(x,\xi)|  e^{\nu_M(nx)+n|\xi|} < \infty, \qquad \forall n \in \N.
\label{STFT-test}
\end{equation}
By \eqref{reconstruction-L2} we have that
$$
\varphi(t) = \frac{1}{(\gamma,\psi)_{L^2}}\iint_{\R^{2d}} V_{\overline{\gamma}} \varphi(x,\xi) M_\xi T_x \overline{\psi}(t) \dx \dxi.
$$
Hence,
\begin{align*}
\int_{\R^d} P(t) \varphi(t) \dt &= \frac{1}{(\gamma,\psi)_{L^2}}  \int_{\R^d} P(t) \left (\iint_{\R^{2d}} V_{\overline{\gamma}} \varphi(x,\xi) M_\xi T_x \overline{\psi}(t) \dx \dxi \right)  \dt \\
&= \frac{1}{(\gamma,\psi)_{L^2}} \iint_{\R^{2d}}  \left ( \int_{\R^d} P(t) M_\xi T_x \overline{\psi}(t) \dt \right)  V_{\overline{\gamma}} \varphi(x,\xi)   \dx \dxi \\
%& = \frac{1}{(\gamma, \psi)_{L^2}}\iint_{\R^{2d}} V_\psi P(x,-\xi) V_{\overline{\gamma}} \varphi(x,\xi) \dx \dxi \\
& = \frac{1}{(\gamma, \psi)_{L^2}}\iint_{\R^{2d}} V_\psi P(x,\xi) V_{\overline{\gamma}} \varphi(x,-\xi) \dx \dxi,
\end{align*}
where the switching of the integral is permitted because of Lemma \ref{STFT-mult} and \eqref{STFT-test}.
\end{proof}
\begin{proof}[Proof of Proposition \ref{quantization}]
Fix $\psi, \gamma \in \mathcal{U}^{(M)}(\C^d)$ such that $(\gamma,\psi)_{L^2} = 1$. For $P \in \mathcal{P}^{M,h}(\C^d)$ we define
$$
\langle \widetilde{\mathcal{F}}_h(P), \varphi \rangle = \iint_{\R^{2d}} V_\psi P(x,\xi) V_{\mathcal{F}^{-1}(\overline{\gamma})} \varphi(\xi,x) e^{2\pi i x \cdot \xi} \dx \dxi, \qquad  \varphi \in \mathcal{O}^{M,2\sqrt{d}Hh/\pi}_C(\R^d).
$$
The mapping $\widetilde{\mathcal{F}}_h :  \mathcal{P}^{M,h}(\C^d) \rightarrow \mathcal{O}'^{M,2\sqrt{d}Hh/\pi}_C(\R^d)$ is continuous by Lemma \ref{STFT-mult}. The commutativity of the upper left part of the diagram follows from \eqref{fundamental} and \eqref{reconstruction-L2}; the rest is straightforward to verify. Consequently, \eqref{inc-k-o} is continuous.\\
\indent We now show that $\mathcal{F}(P \cdot {\bf{f}}) = \widetilde{\mathcal{F}}_h(P) \ast \mathcal{F}({\bf{f}})$ for all $P \in \mathcal{P}^{M,h}(\C^d)$ and ${\bf{f}} \in \mathcal{U}'^{(M)}(\C^d;E)$. Let $\varphi \in \mathcal{K}^{(M)}(\R^d)$ be arbitrary. The equation \eqref{fundamental} and Lemma \ref{reconstruction-P} yield that
\begin{align*}
(\widetilde{\mathcal{F}}_h(P) \ast \check{\varphi})\check{\phantom{n}}(t)
&= \iint_{\R^{2d}} V_\psi P(x,\xi) V_{\mathcal{F}^{-1}(\overline{\gamma})} (T_{-t}\varphi)(\xi,x) e^{2\pi i x \cdot \xi} \dx \dxi \\
&= \iint_{\R^{2d}} V_\psi P(x,\xi) V_{\overline{\gamma}} (\mathcal{F}(T_{-t}\varphi))(x,-\xi) \dx \dxi \\
&= \int_{\R^d} P(u) \mathcal{F}(T_{-t}\varphi)(u) \du = \int_{\R^d} P(u) \widehat{\varphi}(u) e^{2\pi i t \cdot u} \du
= \mathcal{F}^{-1}( P \cdot \widehat{\varphi})(t).
\end{align*}
Hence,
$$
\langle  \widetilde{\mathcal{F}}_h(P) \ast \mathcal{F}({\bf{f}}), \varphi \rangle = \langle \mathcal{F}({\bf{f}}), (\widetilde{\mathcal{F}}_h(P) \ast \check{\varphi})\check{\phantom{n}} \rangle  = \langle \mathcal{F}({\bf{f}}), \mathcal{F}^{-1}( P \cdot \widehat{\varphi}) \rangle = \langle \mathcal{F}(P \cdot {\bf{f}}), \varphi \rangle. $$
\end{proof}

\begin{remark}
Because of Proposition \ref{quantization}, from now on we will not distinguish the maps $\widetilde{\mathcal{F}}_h$ for different $h$ and simply denote them by $\widetilde{\mathcal{F}}$.
\end{remark}

Let $h > 0$. We may view $\mathcal{Q}^{M,h}_{{\boldsymbol{\kappa}}}(\R^d;E)$ as a subspace of $\mathcal{K}'^{(M)}(\R^d;E)$ by identifying an element ${\bf{f}} \in \mathcal{Q}^{M,h}_{{\boldsymbol{\kappa}}}(\R^d;E)$ with the operator
$$
\langle {\mathbf{f}}, \varphi \rangle = \int_{\R^d}{ \mathbf{f}}(x) \varphi(x) \dx, \qquad \varphi \in \mathcal{K}^{(M)}(\R^d).
$$
We now discuss the action of convolution operators on $\mathcal{Q}^{M,h}_{{\boldsymbol{\kappa}}}(\R^d;E)$. We need the following structural result.
 \begin{lemma}\label{structural} Let $h > 0$. For each $g \in \mathcal{O}'^{M,h}_C(\R^d)$ there is a family $\{g_\alpha \in C_{\operatorname{exp}}(\R^d) \, | \, \alpha \in \N^d \}$ such that
 $$
 \sum_{\alpha \in \N^d} \frac{M_\alpha \| g_\alpha e^{n|\, \cdot \,|}\|_{L^\infty}}{(2Hh)^{|\alpha|}} < \infty, \qquad \forall n \in \N,
 $$
 and
 $$
 g = \sum_{\alpha \in \N^d} \partial^{\alpha}g_\alpha  \mbox{ on $\mathcal{K}^{(M)}(\R^d)$}.
 $$
 \end{lemma}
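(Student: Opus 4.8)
The plan is to produce the family $\{g_\alpha\}$ by extending $g$ across the ``jet map'' $\varphi\mapsto(\partial^\alpha\varphi)_\alpha$ via Hahn--Banach, reading off the $g_\alpha$ from the coordinate components of the extension, and finally regularizing the resulting measures into continuous functions of $C_{\operatorname{exp}}(\R^d)$ by means of an ultradifferential parametrix. For the setup, for $n\in\N$ let $C_{b,-n}(\R^d)$ be the Banach space of continuous functions with $\|f\|_{-n}:=\sup_{x}|f(x)|e^{-n|x|}<\infty$, and consider the jet map $D\colon\varphi\mapsto\big(\tfrac{h^{|\alpha|}}{M_\alpha}\partial^\alpha\varphi\big)_{\alpha\in\N^d}$ from $\mathcal{O}^{M,h}_C(\R^d)=\varinjlim_n\mathcal{O}^{M,h,n}_C(\R^d)$ into $Y:=\varinjlim_n\ell^\infty(\N^d;C_{b,-n}(\R^d))$. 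By the very definition of the norm of $\mathcal{O}^{M,h,n}_C$, the map $D$ is isometric on each Banach step, so it is a topological embedding respecting the inductive structure; checking that the $(LB)$-space is well enough behaved for the subspace topology to agree with the original one is the first point requiring care.

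Since $D$ is a topological embedding and $g$ is continuous, $g\circ D^{-1}$ is continuous on $D(\mathcal{O}^{M,h}_C)\subseteq Y$, and I would extend it by Hahn--Banach to $\Phi\in Y'$. Writing $\Phi$ through its coordinates and restricting each coordinate functional to $C_0(\R^d)$ yields a family of Radon measures $g^0_\alpha$ such that $g=\sum_\alpha\partial^\alpha g^0_\alpha$ and, for every $n$, $\sup_\alpha\frac{M_\alpha}{h^{|\alpha|}}\int_{\R^d}e^{n|x|}\,d|g^0_\alpha|<\infty$; this is a projective, ``for all $n$'' supremum bound, in agreement with $\mathcal{O}'^{M,h}_C=\varprojlim_n(\mathcal{O}^{M,h,n}_C)'$. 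The crucial observation, and the reason the identity is only claimed on $\mathcal{K}^{(M)}(\R^d)$, is that for $\varphi\in\mathcal{K}^{(M)}(\R^d)$ the super-exponential decay of $\varphi$ forces the jet $D\varphi$ to lie in $c_0(\N^d;C_0(\R^d))$, so that the (possibly singular) part of $\Phi$ annihilating $c_0$ does not contribute; one then recovers exactly $g=\sum_\alpha\partial^\alpha g^0_\alpha$ on $\mathcal{K}^{(M)}(\R^d)$.

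It remains to replace the measures $g^0_\alpha$ by continuous functions with the stated summability. Here I would invoke $(M.2)$ to fix an ultradifferential operator $P(\partial)=\sum_\gamma c_\gamma\partial^\gamma$ of class $(M)$, with $|c_\gamma|M_\gamma\le CL^{|\gamma|}$ for some $L<2h$, whose fundamental solution $\mathcal E$ belongs to $C_{\operatorname{exp}}(\R^d)$. Convolving the identity with $\mathcal E$ turns each $g^0_\alpha$ into a continuous function $g^0_\alpha\ast\mathcal E\in C_{\operatorname{exp}}(\R^d)$, with $\|(g^0_\alpha\ast\mathcal E)e^{n|\cdot|}\|_{L^\infty}\le\|\mathcal E e^{n|\cdot|}\|_{L^\infty}\int e^{n|x|}\,d|g^0_\alpha|$; applying $P(\partial)$ back, expanding and reindexing $\beta=\alpha+\gamma$ gives $g=\sum_\beta\partial^\beta g_\beta$ with $g_\beta=\sum_{\alpha+\gamma=\beta}c_\gamma\,(g^0_\alpha\ast\mathcal E)\in C_{\operatorname{exp}}(\R^d)$. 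The bound $\sum_\beta\frac{M_\beta}{(2Hh)^{|\beta|}}\|g_\beta e^{n|\cdot|}\|_{L^\infty}<\infty$ then follows by factoring the double sum over $\alpha$ and $\gamma$: the estimate $M_\beta\le C_0H^{|\beta|}M_\alpha M_\gamma$ cancels the factor $H^{|\beta|}$, the bound $|c_\gamma|M_\gamma\le CL^{|\gamma|}$ with $L<2h$ makes the $\gamma$-sum a convergent geometric series, and the supremum bound on the $g^0_\alpha$ together with $\sum_\alpha 2^{-|\alpha|}<\infty$ controls the $\alpha$-sum. This is precisely where the constant $2H$ in the statement is consumed.

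I expect the main obstacle to be this last, decay-preserving passage from measures to continuous functions. A naive double antiderivative fails: the antiderivative of an exponentially concentrated measure need not decay, because the total mass and first moments obstruct it. One therefore genuinely needs an ultradifferential operator of class $(M)$ possessing a fundamental solution in $C_{\operatorname{exp}}(\R^d)$ -- whose existence is exactly what $(M.2)$ guarantees -- and arranging the bookkeeping so that continuity, exponential decay for all $n$, and $(2Hh)$-summability hold simultaneously is the delicate part. A secondary technical point, to be settled first, is the regularity of the $(LB)$-space that makes $D$ a bona fide topological embedding, so that Hahn--Banach delivers the projective ``for all $n$'' bounds in a single step rather than level by level.
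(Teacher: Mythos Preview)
Your outline has the right architecture (jet map $+$ duality $+$ regularization), but there are two genuine gaps.

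\textbf{The regularization step is not justified.} You claim $(M.2)$ furnishes an ultradifferential operator $P(\partial)=\sum_\gamma c_\gamma\partial^\gamma$ with $|c_\gamma|M_\gamma\le CL^{|\gamma|}$ whose fundamental solution $\mathcal E$ lies in $C_{\operatorname{exp}}(\R^d)$. Condition $(M.2)$ does give ultradifferential operators, but \emph{not} ones with super-exponentially decaying fundamental solutions. For the standard product $P(\xi)=\prod_p(1+|\xi|^2/(\ell m_p)^2)$, the symbol $1/P$ has poles at imaginary height $\ell m_1$, so $\mathcal F^{-1}(1/P)$ decays like $e^{-c|x|}$ for $c<\ell m_1$ only, and this cannot be made uniform in $n$. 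Obtaining a symbol that is bounded below on \emph{every} tube $V_n$ is precisely the content of the paper's Proposition~6.2, which requires the additional hypothesis $(M.2)^\ast$; Lemma~\ref{structural}, however, is stated under $(M.2)$ alone. The paper avoids this entirely by using a \emph{finite-order} parametrix: with $L$ a fundamental solution of $\Delta^d$ and $\psi\in\DD(\R^d)$ equal to $1$ near $0$, one has $\delta=\Delta^d(L\psi)+\chi$ with $\chi\in\DD(\R^d)$, and convolution by the compactly supported continuous kernels $L\psi,\chi$ upgrades the $L^1$-bounds to $L^\infty$-bounds at the cost of the single factor $H$. Your scheme would work verbatim with this substitution.

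\textbf{The Hahn--Banach step is incomplete.} You correctly flag that $D:\mathcal O^{M,h}_C(\R^d)\to Y=\varinjlim_n\ell^\infty(\N^d;C_{b,-n})$ must be a topological embedding for a single Hahn--Banach extension to produce the projective ``for all $n$'' bound; stepwise isometries do not guarantee this for general $(LB)$-spaces, and extending level by level yields a family $\Phi_n$ with no reason to be compatible. The paper circumvents the issue by passing to the $(DFS)$-space $\mathcal O^{M,h+}_C(\R^d)=\varinjlim_{k>h}\mathcal O^{M,k}_C(\R^d)$ (the linking maps are compact), whose reflexivity allows a clean transpose argument: the synthesis map $S:(g_\alpha)_\alpha\mapsto\sum_\alpha\partial^\alpha g_\alpha$ from a Fr\'echet sequence space $X$ (with $L^1$-summability at scale $2h$) into $\mathcal O'^{M,h+}_C(\R^d)$ is shown to be surjective because its transpose $\varphi\mapsto((-1)^{|\alpha|}\partial^\alpha\varphi)_\alpha$ is injective with weak-$\ast$ closed range. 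This delivers the $L^1$ representation directly, after which the $\Delta^d$ trick above gives the $L^\infty$ statement.
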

\begin{proof}
It is enough to show that for each $g \in \mathcal{O}'^{M,h}_C(\R^d)$ there is a family  $\{g_\alpha  \, | \, \alpha \in \N^d \}$ of measurable functions  such that
$$
\sum_{\alpha \in \N^d} \frac{M_\alpha \| g_\alpha e^{n|\, \cdot \,|}\|_{L^1}}{(2h)^{|\alpha|}} < \infty, \qquad \forall n \in \N,
$$
and $g = \sum_{\alpha \in \N^d} \partial^{\alpha}g_\alpha$ on $\mathcal{K}^{(M)}(\R^d)$. Indeed, let $L\in C(\R^d)\cap C^{\infty}(\R^d\backslash\{0\})$ be a fundamental solution of $\Delta^d$, where $\Delta=\partial^2_1+\ldots+\partial^2_d$ is the Laplacian. Pick $\psi\in\DD(\R^d)$ such that $\psi=1$ on a neighborhood of $0$. Then $\delta-\Delta^d(L\psi)=\chi\in\DD(\R^d)$. Let $\{g_\alpha  \, | \, \alpha \in \N^d \}$ be as above and $\Delta^d=\sum_{\beta}c_{\beta} \partial^{\beta}$; only finitely many $c_{\beta}$ are nonzero. We define
\beqs
\widetilde{g}_{\alpha}=\chi*g_{\alpha}+ \sum_{\beta\leq\alpha}c_{\beta}(L\psi)*g_{\alpha-\beta},\quad \alpha\in\N^d.
\eeqs
Then, the family   $\{\widetilde{g}_{\alpha}  \, | \, \alpha \in \N^d \}$ satisfies all requirements.\\
\indent We now show the above statement. We define the $(LB)$-space
$$
\mathcal{O}^{M,h+}_C(\R^d) =  \varinjlim_{k > h} \mathcal{O}^{M,k}_C(\R^d).
$$
It is straightforward to check that for $h_2>h_1$ and $n_1>n_2$, the inclusion mapping $\mathcal{O}^{M,h_2,n_2}_C(\R^d)\rightarrow \mathcal{O}^{M,h_1,n_1}_C(\R^d)$ is compact; consequently, $\mathcal{O}^{M,h+}_C(\R^d)$ is a $(DFS)$-space. Note that $\mathcal{K}^{(M)}(\R^d) \subset \mathcal{O}^{M,h +}_C(\R^d) \subset \mathcal{O}^{M,h}_C(\R^d)$ with continuous inclusions.
We define $X$ as the Fr\'echet space consisting of all multi-indexed sequences $(g_\alpha)_{\alpha \in \N^d}$ of (equivalence classes) of measurable functions on $\R^d$ such that
$$
\sum_{\alpha \in \N^d} \frac{M_\alpha \| g_\alpha e^{n|\, \cdot \,|}\|_{L^1}}{k^{|\alpha|}} < \infty, \qquad \forall n \in \N, \forall k > h.
$$
For each $(g_{\alpha})_{\alpha\in\N^d}$ and $\alpha\in\N^d$, the linear functional
\beqs
I_{\alpha}((g_{\alpha})_{\alpha\in\N^d}):\mathcal{O}^{M,h +}_C(\R^d) \rightarrow\C,\quad
\langle I_{\alpha}((g_{\alpha})_{\alpha\in\N^d}),\varphi\rangle= (-1)^{|\alpha|}\int_{\R^d}g_{\alpha}(x)\partial^{\alpha}\varphi(x)\dx
\eeqs
is continuous. Furthermore, the linear mapping
$$
S: X \rightarrow \mathcal{O}'^{M,h + }_C(\R^d) :  (g_\alpha)_{\alpha \in \N^d} \mapsto  \sum_{\alpha \in \N^d}I_{\alpha}((g_{\alpha})_{\alpha\in\N^d})
$$
is well defined and continuous as well since the right hand side is absolutely summable in $\mathcal{O}'^{M,h + }_C(\R^d)$. Notice that for each $(g_{\alpha})_{\alpha\in\N^d}$, the restriction of $S((g_{\alpha})_{\alpha\in\N^d})$ to $\mathcal{K}^{(M)}(\R^d)$ is $\sum_{\alpha\in\N^d}\partial^{\alpha}g_{\alpha}$ (the latter is absolutely summable in $\mathcal{K}'^{(M)}(\R^d)$). Thus, it is enough to prove $S$ is surjective. By \cite[Theorem 37.2]{Treves}, this is equivalent to proving that the  transpose of $S$ is injective and has  weak-$*$ closed range. The dual of $X$ coincides with the space $Y$ consisting of all multi-indexed sequences $(f_\alpha)_{\alpha \in \N^d}$ of (equivalence classes) of measurable  functions on $\R^d$ such that for some $n \in \N$ and $k > h$
$$
\sup_{\alpha \in \N^d} \frac{k^{|\alpha|} \| f_\alpha e^{-n|\, \cdot \,|}\|_{L^\infty}}{M_\alpha} < \infty,
$$
under the dual pairing
$$
\langle (f_\alpha)_{\alpha \in \N^d}, (g_\alpha)_{\alpha \in \N^d} \rangle = \sum_{\alpha \in \N^d} \int_{\R^d} f_\alpha(x) g_\alpha(x) \dx, \qquad (f_\alpha)_{\alpha \in \N^d} \in Y, (g_\alpha)_{\alpha \in \N^d} \in X.
$$
As $\mathcal{O}^{M,h + }_C(\R^d)$ is reflexive, the transpose of $S$ is given by
$$
\mathcal{O}^{M,h + }_C(\R^d) \rightarrow Y :  \varphi \mapsto  ((-1)^{|\alpha|} \partial^{\alpha}\varphi)_{\alpha \in \N^d}.
$$
It is clear that this mapping is injective and  has weak-$*$ closed range.
%Let $(\varphi_j)_j$ be a net in $\mathcal{Z}^{M,h + }(\R^d)$ and $(f_\alpha)_{\alpha \in \N^d} \in Y$ such that $S^t(\varphi_j) \rightarrow (f_\alpha)_{\alpha \in \N^d}$ in $(Y, \sigma(Y,X))$. In particular, $\partial^{\alpha}\varphi_j \rightarrow f_\alpha$ in $(\mathcal{D}'(\R^d), \sigma(\mathcal{D}'(\R^d),\mathcal{D}(\R^d)))$ for all $\alpha \in \N^d$. Consequently, $\partial^{\alpha}f_0 = (-1)^{|\alpha|}f_\alpha$ in $\mathcal{D}'(\R^d)$ for all $\alpha \in \N^d$. This implies that $f_0 \in \mathcal{Z}^{M,h + }(\R^d)$ and thus that $(f_\alpha)_{\alpha \in \N^d} = S^t(f_0)$.
\end{proof}

 \begin{proposition} \label{vv-conv} Let $h > 0$ and let $g \in \mathcal{O}'^{M,h/(2H^2)}_C(\R^d)$. The mapping
 $$
\mathcal{Q}^{M,h}_{{\boldsymbol{\kappa}}}(\R^d;E)  \rightarrow \mathcal{Q}^{M,h/H}_{{\boldsymbol{\kappa}}}(\R^d;E): {\bf{f}} \mapsto g \ast {\bf{f}}
 $$
 is continuous. Moreover, if $\{g_\alpha \in C_{\operatorname{exp}}(\R^d) \, | \, \alpha \in \N^d \}$ leads to a representation of $g$ as in Lemma \ref{structural} (with $h/(2H^2)$ instead of $h$), then
\beq \label{ultrapolsub}
 (g \ast {\bf{f}})(x) = \sum_{\alpha \in \N^d} \int_{\R^d} g_\alpha(t) \partial^{\alpha}{\bf{f}}(x-t) \dt, \qquad {\bf{f}} \in  \mathcal{Q}^{M,h}_{{\boldsymbol{\kappa}}}(\R^d;E).
\eeq
 \end{proposition}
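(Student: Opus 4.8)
The plan is to build $g\ast{\bf f}$ by hand through the right-hand side of \eqref{ultrapolsub}, establish all estimates for it directly, and only at the end identify it with the abstractly defined convolution. First I would fix, via Lemma \ref{structural} applied with $h/(2H^2)$ in place of $h$, a family $\{g_\alpha\}\subset C_{\operatorname{exp}}(\R^d)$ with $g=\sum_\alpha\partial^\alpha g_\alpha$ on $\mathcal{K}^{(M)}(\R^d)$ and
\begin{equation*}
S_n:=\sum_{\alpha\in\N^d}(H/h)^{|\alpha|}M_\alpha\,\|g_\alpha e^{n|\cdot|}\|_{L^\infty}<\infty,\qquad\forall n\in\N,
\end{equation*}
where I used $2H\cdot h/(2H^2)=h/H$. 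For ${\bf f}\in\mathcal{Q}^{M,h}_{\bm\kappa}(\R^d;E)$ I then set ${\bf F}(x)=\sum_\alpha\int_{\R^d}g_\alpha(t)\,\partial^\alpha{\bf f}(x-t)\dt$.

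The core of the argument is one seminorm estimate. Fixing $p\in\csn(E)$ and $\beta\in\N^d$, I would bound the differentiated integrand $g_\alpha(t)\partial^{\alpha+\beta}{\bf f}(x-t)$ using the defining inequality $p(\partial^{\alpha+\beta}{\bf f}(y))\leq p_{\bm\kappa,h}({\bf f})\,M_{\alpha+\beta}h^{-|\alpha+\beta|}e^{\kappa_p|y|}$ with $y=x-t$, the triangle inequality $|x-t|\leq|x|+|t|$, and condition $(M.2)$ in the form $M_{\alpha+\beta}\leq C_0H^{|\alpha|+|\beta|}M_\alpha M_\beta$. This dominates the integrand by $p_{\bm\kappa,h}({\bf f})\,C_0\,(H/h)^{|\alpha|}M_\alpha\,(H/h)^{|\beta|}M_\beta\,e^{\kappa_p|x|}\,|g_\alpha(t)|e^{\kappa_p|t|}$. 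Since each $g_\alpha$ decays superexponentially, choosing an integer $n\geq\kappa_p+d+1$ gives $\int_{\R^d}|g_\alpha(t)|e^{\kappa_p|t|}\dt\leq c_d\|g_\alpha e^{n|\cdot|}\|_{L^\infty}$ with $c_d=\int_{\R^d}e^{-(d+1)|t|}\dt$. Integrating in $t$ and summing over $\alpha$ with the help of $S_n$ then yields, with $C_p:=C_0c_dS_n$ independent of ${\bf f},\beta,x$,
\begin{equation*}
p\big(\partial^\beta{\bf F}(x)\big)\leq C_p\,p_{\bm\kappa,h}({\bf f})\,(H/h)^{|\beta|}M_\beta\,e^{\kappa_p|x|},
\end{equation*}
which is precisely $p_{\bm\kappa,h/H}({\bf F})\leq C_p\,p_{\bm\kappa,h}({\bf f})$, i.e. the asserted continuity.

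Before reading off this estimate I must check that ${\bf F}$ is a well-defined smooth function with $\partial^\beta{\bf F}(x)=\sum_\alpha\int g_\alpha(t)\partial^{\alpha+\beta}{\bf f}(x-t)\dt$. For each $\alpha$ the weak integral exists by the integrability criterion recalled in Section \ref{sect-prel}, and differentiating under the weak integral sign shows $x\mapsto\int g_\alpha(t)\partial^\alpha{\bf f}(x-t)\dt$ is smooth with the expected derivatives. The estimate above shows that the partial sums over $|\alpha|\leq N$, together with all their derivatives, converge uniformly on compacta in every seminorm $p$; since $E$ is sequentially complete, the limit ${\bf F}$ lies in $C^\infty(\R^d;E)$, the derivatives pass to the limit, and both the convergence of the defining series and formula \eqref{ultrapolsub} follow.

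Finally I would identify ${\bf F}$ with the abstract $g\ast{\bf f}$ by verifying $\int{\bf F}(x)\varphi(x)\dx=\langle{\bf f},(g\ast\check\varphi)\check{\phantom{n}}\rangle$ for all $\varphi\in\mathcal{K}^{(M)}(\R^d)$, where ${\bf f}$ is regarded as an element of $\mathcal{K}'^{(M)}(\R^d;E)$ as in Section \ref{Fmultipliers}; pairing with $e'\in E'$ reduces this to a scalar identity for $f_{e'}=\langle e',{\bf f}\rangle$. On the right I would expand $(g\ast\check\varphi)\check{\phantom{n}}(x)=\langle g(t),\varphi(t+x)\rangle=\sum_\alpha(-1)^{|\alpha|}\int g_\alpha(t)(\partial^\alpha\varphi)(t+x)\dt$ through the representation of $g$; on the left I would substitute $u=x-t$ and integrate by parts to shift $\partial^\alpha$ from ${\bf f}$ onto $\varphi$ (no boundary terms appear, thanks to the superexponential decay of $\varphi$ against the exponential growth of $f_{e'}$). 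A Fubini argument, justified by the same integrable majorants that power the key estimate, then matches the two sides. The main obstacle is exactly this last bookkeeping — legitimizing the interchanges of summation, integration and differentiation — but the single quantitative input is the seminorm estimate above, after which both assertions of the proposition drop out.
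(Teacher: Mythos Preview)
Your proposal is correct and follows essentially the same route as the paper: build the convolution concretely via the representation $g=\sum_\alpha\partial^\alpha g_\alpha$ from Lemma \ref{structural}, obtain the continuity estimate from $(M.2)$ in the form $M_{\alpha+\beta}\leq C_0H^{|\alpha|+|\beta|}M_\alpha M_\beta$, and then identify the result with the abstractly defined $g\ast{\bf f}$ in $\mathcal{K}'^{(M)}(\R^d;E)$. The paper's write-up is slightly more economical in that it estimates $p_{\bm\kappa,h/H}\big(T_t(\partial^\alpha{\bf f})\big)$ directly to show the series $\sum_\alpha\int g_\alpha(t)\,T_t(\partial^\alpha{\bf f})\,\mathrm{d}t$ is absolutely summable in the (sequentially complete) space $\mathcal{Q}^{M,h/H}_{\bm\kappa}(\R^d;E)$, which packages your differentiation-under-the-integral and uniform-convergence steps into one line, but the substance is identical.
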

 \begin{proof}
 We start by showing that the mapping
 $$
 S:  \mathcal{Q}^{M,h}_{{\boldsymbol{\kappa}}}(\R^d;E)  \rightarrow \mathcal{Q}^{M,h/H}_{{\boldsymbol{\kappa}}}(\R^d;E)  : {\bf{f}} \mapsto  \sum_{\alpha \in \N^d} \int_{\R^d} g_\alpha(t) T_{t}(\partial^{\alpha}{\bf{f}}) \dt
 $$
 is continuous.  Let ${\bf{f}} \in \mathcal{Q}^{M,h}_{{\boldsymbol{\kappa}}}(\R^d;E) $ be arbitrary. For all $p \in \operatorname{csn}(E)$ we have that
 \begin{align*}
 \sum_{\alpha \in \N^d} p_{\bm{\kappa},h/H} \left( \int_{\R^d} g_\alpha(t) T_{t}(\partial^{\alpha}{\bf{f}}) \dt \right) &\leq   \sum_{\alpha \in \N^d}  \int_{\R^d} |g_\alpha(t)|  p_{\bm{\kappa},h/H} (T_{t}(\partial^{\alpha}{\bf{f}})) \dt \\
 &\leq C_0 \left(  \sum_{\alpha \in \N^d} \frac{M_\alpha \| g_\alpha e^{\kappa_p|\, \cdot \,|}\|_{L^1}}{(h/H)^{|\alpha|}}\right)  p_{\bm{\kappa},h}( {\bf{f}}).
 \end{align*}
This shows that the series $\sum_{\alpha \in \N^d} \int_{\R^d} g_\alpha(t) T_{t}(\partial^{\alpha}{\bf{f}}) \dt$ is absolutely summable in  $\mathcal{Q}^{M,h/H}_{{\boldsymbol{\kappa}}}(\R^d;E)$. Since $\mathcal{Q}^{M,h/H}_{{\boldsymbol{\kappa}}}(\R^d;E)$ is sequentially complete (as $E$ is so), we obtain that $$\sum_{\alpha \in \N^d} \int_{\R^d} g_\alpha(t) T_{t}(\partial^{\alpha}{\bf{f}}) \dt \in \mathcal{Q}^{M,h/H}_{{\boldsymbol{\kappa}}}(\R^d;E).$$ Moreover, the above estimate implies that the mapping $S$ is continuous.  Hence, the result follows by observing that
 $
 g \ast {\bf{f}} = \sum_{\alpha \in \N^d} \int_{\R^d} g_\alpha(t) T_{t}(\partial^{\alpha}{\bf{f}}) \dt
 $
for all ${\mathbf{f}} \in  \mathcal{Q}^{M,h}_{{\boldsymbol{\kappa}}}(\R^d;E)$ because obviously the equality holds as elements of  $\mathcal{K}'^{(M)}(\R^d;E)$.
\end{proof}

\section{Proof of the factorization theorem}\label{thesectiononthefact}
This section is devoted to the proof of Theorem \ref{factorization}. We start with the construction of elements in $\mathcal{P}^{M,1}(\C^d)$ that satisfy suitable lower bounds. We need the following lemma.
\begin{lemma} \label{improvement}  Let $M$ be a weight sequence satisfying $(M.2)^*$. There is a non-decreasing continuous function $\nu: [0,\infty) \rightarrow [0,\infty)$ such that
\begin{itemize}
\item[$(i)$] $\nu_M \asymp \nu$, i.e., $\nu(t) = O(\nu_M(t))$ and $\nu_M(t) = O(\nu(t))$.
\item[$(ii)$] There is a continuous function $\eta: [0,\infty) \rightarrow [0,\infty)$ with $\eta(t) = o(\nu(t))$ and $N \in \N$ such that
$$
|\nu(t_1) - \nu(t_2)| \leq \eta(t_2) (1+ |t_1-t_2|)^N, \qquad  t_1,t_2 \geq 0.
$$
\end{itemize}
\end{lemma}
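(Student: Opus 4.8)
The plan is to take $\nu=\nu_M$ itself, so that part (i) holds trivially with comparison constant $1$, and to extract $\eta$ and $N$ from a single quantitative consequence of $(M.2)^*$. Throughout I would work with the counting function $n(t)=\#\{j\ge 1:m_j\le t\}$ and with $\omega(s):=\nu_M(e^s)$. Since $\omega(s)=\sup_{p}(ps-\log M_p)$ is a supremum of affine functions of $s$, it is convex and non-decreasing; its right derivative at $s$ equals $n(e^s)$, and correspondingly $\nu_M$ is absolutely continuous with $\nu_M'(t)=n(t)/t$ almost everywhere. This reformulation is what lets me turn the multiplicative doubling of $\nu_M$ into a pointwise bound on its slope.

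The key step, and essentially the only place where $(M.2)^*$ enters, is the estimate
$$ n(t)\le C'(1+\nu_M(t)),\qquad t\ge 0. $$
To obtain it I would use the equivalent form of $(M.2)^*$ recorded in Section~\ref{sect-prel}, namely $\nu_M(2t)\le L\nu_M(t)+\log C$, i.e. $\omega(s+\log 2)\le L\omega(s)+\log C$; convexity then gives $n(e^s)=\omega'(s^+)\le(\omega(s+\log 2)-\omega(s))/\log 2\le (L\omega(s)+\log C)/\log 2$, which is the claim. Inserting $\nu_M'(t)=n(t)/t\le C'(1+\nu_M(t))/t$ into the differential inequality $(\log(1+\nu_M))'(t)\le C'/t$ yields the polynomial growth bound $1+\nu_M(t)\le A(1+t)^{C'}$. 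I then fix an integer $N\ge C'+1$.

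With these at hand I would set $\eta(t):=K(1+\nu_M(t))/(1+t)$ for a sufficiently large constant $K$, and verify (ii) by splitting according to whether $s:=|t_1-t_2|\le t_2/2$. In the small-increment regime I write the increment as $\int\nu_M'$, bound $\nu_M'(u)\le C'(1+\nu_M(u))/u$, and use the doubling $1+\nu_M(2t_2)\le L(1+\nu_M(t_2))+\log C$ to replace the size of the integrand by $(1+\nu_M(t_2))/t_2$; the leftover factor $s/(1+s)^N\le 1$ is then harmless, so the quotient is at most a constant times $(1+\nu_M(t_2))/t_2\le\eta(t_2)$ once $K$ is large. In the complementary (large $s$) regime I bound $\nu_M(t_1)\le\nu_M(2t_2+2s)\lesssim(1+s)^{C'}$ by polynomial growth and divide by $(1+s)^N$; since $s\ge t_2/2$ and the surplus exponent satisfies $N-C'\ge 1$, the quotient is $\lesssim (1+s)^{-1}\lesssim(1+t_2)^{-1}\le\eta(t_2)$. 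Finally $\eta(t)/\nu_M(t)=K(1+\nu_M(t))/((1+t)\nu_M(t))\to 0$ because $\nu_M(t)\to\infty$ and $1/(1+t)\to0$, so $\eta=o(\nu)$, and $\eta$ is continuous because $\nu_M$ is.

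The main obstacle is precisely the estimate $n(t)\lesssim 1+\nu_M(t)$: it is the assertion that the logarithmic slope of $\nu_M$ is controlled by its own size, and it is what makes both regimes work simultaneously — it forces the small-increment Lipschitz constant $\nu_M'(t)\approx n(t)/t$ to be $o(\nu_M(t))$, while the resulting polynomial growth, cushioned by the factor $(1+|t_1-t_2|)^N$, controls large increments. Everything after this estimate is an elementary case analysis. If one preferred a smoother $\nu$ (e.g.\ for later use in the entire-function construction), one could instead mollify $\nu_M$ in the variable $s=\log t$; the resulting function is comparable to $\nu_M$ and satisfies the same bounds, which is the reason the statement only requires $\nu\asymp\nu_M$ rather than equality.
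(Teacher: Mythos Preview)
Your argument is correct and takes a genuinely different route from the paper's. The paper does not set $\nu=\nu_M$; instead it invokes a result of Jim\'enez-Garrido, Sanz and Schindl \cite{JGSS} to produce $N\in\Z_+$ with $\int_1^\infty \nu_M(ts)\,s^{-1-N}\,\mathrm{d}s \le L\nu_M(t)+\log C$, then \emph{defines} $\nu(t)=t^N\int_t^\infty \nu_M(s)\,s^{-1-N}\,\mathrm{d}s$ and $\eta(t)=\bigl(\sum_{j=0}^{N-1}\binom{N}{j}t^j\bigr)\int_t^\infty \nu_M(s)\,s^{-1-N}\,\mathrm{d}s$, and verifies (ii) by an algebraic expansion of $t_1^N-t_2^N$. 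Your approach is more elementary and entirely self-contained: the slope bound $n(t)\lesssim 1+\nu_M(t)$, read off directly from convexity of $\omega(s)=\nu_M(e^s)$ and the doubling inequality, simultaneously yields the small-increment Lipschitz control and the polynomial growth that handles the large-increment case, and your $\eta(t)=K(1+\nu_M(t))/(1+t)$ is explicit. The paper's integral averaging produces a smoother $\nu$, but Proposition~\ref{par-2} does not exploit this, so nothing is lost by taking $\nu=\nu_M$. One small correction: in your small-increment case the claimed inequality $(1+\nu_M(t_2))/t_2\le\eta(t_2)$ fails for $0<t_2<1$; however, since $\nu_M\equiv 0$ on $[0,1]$ the increment there is at most $\nu_M(3/2)$ while $\eta(t_2)\ge K/2$, so enlarging $K$ disposes of this range trivially.
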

\begin{proof}
Condition $(M.2)^*$ implies that $\nu_M(2t) = O(\nu_M(t))$. Hence, \cite[Theorem 2.11 and Corollary 2.14]{JGSS} imply that there are $N \in \Z_+$ and $C, L > 0$ such that
$$
\int_1^\infty \frac{\nu_M(ts)}{s^{1+N}} \mathrm{d}s  \leq  L\nu_M(t) + \log C,  \qquad t \geq 0.
$$
Set
$$
\nu(t) = \int_1^\infty \frac{\nu_M(ts)}{s^{1+N}} \mathrm{d}s = t^N \int_t^\infty \frac{\nu_M(s)}{s^{1+N}} \mathrm{d}s, \qquad t \geq 0.
$$
It is clear that $\nu$ is continuous and non-decreasing, and that $\nu_M \asymp \nu$.
Let us show that $\nu$ satisfies property $(ii)$. Set
$$
\eta(t) = \left( \sum_{j = 0}^{N-1} \binom{N}{j} t^j \right)  \int_t^\infty \frac{\nu_M(s)}{s^{1+N}} \mathrm{d}s, \qquad t \geq 0.
$$
It holds that $\eta$ is continuous and that  $\eta(t) = o(\nu(t))$. Let $t_1, t_2 \geq 0$ be arbitrary. If $t_1 \geq t_2$,  then
\begin{align*}
&|\nu(t_1) - \nu(t_2)|= {t_1}^N \int_{t_1}^\infty \frac{\nu_M(s)}{s^{1+N}} \mathrm{d}s - {t_2}^N \int_{t_2}^\infty \frac{\nu_M(s)}{s^{1+N}} \mathrm{d}s \\
&\leq  %({t_1}^N - {t_2}^N)\int_{t_2}^\infty \frac{\nu_M(s)}{s^{1+N}} \mathrm{d}s =
 \left( \sum_{j = 0}^{N-1} \binom{N}{j}(t_1-t_2)^{N-j} t_2^j \right) \int_{t_2}^\infty \frac{\nu_M(s)}{s^{1+N}} \mathrm{d}s
\leq
% (1 + (t_1-t_2))^{N}  \left( \sum_{j = 0}^{N-1} \binom{N}{j} t_2^j \right) \int_{t_2}^\infty \frac{\nu_M(s)}{s^{1+N}} \mathrm{d}s =
 \eta(t_2) (1+ |t_1-t_2|)^N.
\end{align*}
If $t_2 \geq t_1$,  then
\begin{align*}
&|\nu(t_1) - \nu(t_2)|  = {t_2}^N \int_{t_2}^\infty \frac{\nu_M(s)}{s^{1+N}} \mathrm{d}s - {t_1}^N \int_{t_1}^\infty \frac{\nu_M(s)}{s^{1+N}} \mathrm{d}s \\
&\leq %  ({t_2}^N - {t_1}^N)\int_{t_2}^\infty \frac{\nu_M(s)}{s^{1+N}} \mathrm{d}s =
\left( \sum_{j = 0}^{N-1} \binom{N}{j}(t_2-t_1)^{N-j} t_1^j \right) \int_{t_2}^\infty \frac{\nu_M(s)}{s^{1+N}} \mathrm{d}s
%\\
%&\leq
% (1 + (t_2-t_1))^{N}  \left( \sum_{j = 0}^{N-1} \binom{N}{j} t_1^j \right) \int_{t_2}^\infty \frac{\nu_M(s)}{s^{1+N}} \mathrm{d}s  \\
%&\leq (1 + |t_1-t_2|)^{N}  \left( \sum_{j = 0}^{N-1} \binom{N}{j} t_2^j \right) \int_{t_2}^\infty \frac{\nu_M(s)}{s^{1+N}} \mathrm{d}s  \\
%&
%=
\leq
\eta(t_2) (1+ |t_1-t_2|)^N.
\end{align*}
\end{proof}
\begin{proposition}\label{par-2}
 Let $M$ be a weight sequence satisfying $(M.2)$ and $(M.2)^*$. There exist  $P \in \mathcal{O}(\C^d)$ and $\delta > 0$ such that for every $n \in \N$ there is $C_n > 0$ such that
$$
C_n^{-1}e^{\nu_M(\delta \Ree z)} \leq |P(z)| \leq C_n e^{\nu_M(\Ree z)}, \qquad   z \in V_n.
$$
\end{proposition}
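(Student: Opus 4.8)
The plan is to reduce to a one-dimensional construction and to exploit the regularized weight $\nu$ furnished by Lemma \ref{improvement}. First I would note that the two requested inequalities, taken together over all $n$, force $|P(z)| \geq C_n^{-1}e^{\nu_M(\delta \Ree z)} > 0$ for every $z \in \C^d = \bigcup_n V_n$, so $P$ must be zero-free. Since $\C^d$ is simply connected, this means $P = \exp g$ for some entire $g$, and the problem becomes the construction of an entire $g$ whose real part tracks $\nu_M(\Ree z)$: an upper bound $\Ree g(z) \le \nu_M(\Ree z) + O_n(1)$ and a lower bound $\Ree g(z) \ge \nu_M(\delta \Ree z) - O_n(1)$ on each tube $V_n$. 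Throughout I would work with the equivalent weight $\nu$ of Lemma \ref{improvement}; since $\nu \asymp \nu_M$, say $c_1\nu_M \le \nu \le c_2\nu_M$ for large argument, estimates phrased with $\nu$ transfer to $\nu_M$ up to the multiplicative constants $c_1,c_2$, which will be absorbed at the end.

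Second, I would separate the variables: construct a single-variable entire $g_1$ and set $g(z) = \sum_{k=1}^d g_1(z_k)$, so that $P(z) = \prod_k \exp g_1(z_k)$ and $\Ree g(z) = \sum_k \Ree g_1(z_k)$. The dimensional sum is controlled by $(M.2)$: iterating $2\nu_M(t) \le \nu_M(Ht) + \log C_0$ gives $2^m\nu_M(t) \le \nu_M(H^m t) + C$, so if the $1$D block obeys $\Ree g_1(w) \le \nu_M(|\Ree w|/H^m) + O_n(1)$ on strips $|\Imm w| \le n$ (with $2^m \ge d$), then $\sum_k \nu_M(|x_k|/H^m) \le d\,\nu_M(|x|/H^m) \le \nu_M(|x|) + C$, which yields the upper bound with the exact weight $\nu_M(\Ree z)$. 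For the lower bound one keeps only the largest factor and uses $\max_k |x_k| \ge |x|/\sqrt d$, so that a $1$D lower estimate $\Ree g_1(w) \ge \nu_M(\delta_1|\Ree w|) - O_n(1)$ produces $\Ree g(z) \ge \nu_M(\delta_1|x|/\sqrt d) - O_n(1)$; thus a single $\delta = \delta_1/\sqrt d$ serves all $n$.

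The heart of the matter is the $1$D construction, and here I would build $g_1$ as an \emph{honest} analytic/harmonic extension of $\nu(|x|)$ rather than a convolution regularization: the latter attenuates the boundary values by a multiplicative factor that decays with the strip width $n$ and would therefore prevent a single $\delta$ from serving all $n$. Concretely I would realize $g_1$ via a suitably subtracted Poisson-type (Nevanlinna) integral, or equivalently by setting $g_1(z) = G(z^2)$ for an entire $G$, real on the reals, with $G(w) \approx \nu(\sqrt{w})$ on $[0,\infty)$ and controlled near the positive ray; the number of subtractions is dictated precisely by the polynomial exponent $N$ in Lemma \ref{improvement}(ii). The key quantitative input is that for such a genuine extension the deviation off the real axis is \emph{multiplicatively} negligible: Lemma \ref{improvement}(ii) should give
\[
|\Ree g_1(x+iy) - \nu(|x|)| \le C\,\eta(|x|)\,(1+|y|)^N, \qquad |y| \le n,
\]
and since $\eta = o(\nu)$ this error is $o(\nu(|x|))$ as $|x| \to \infty$ for each fixed $n$. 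I expect this step --- producing an entire function realizing the prescribed radial growth with an $n$-\emph{additive} (not $n$-\emph{multiplicative}) error, and verifying the estimate above --- to be the main obstacle.

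Finally I would assemble the bounds. Scaling $g_1$ by a small constant $a > 0$ (and by $H^{-m}$ in the argument), the upper estimate reads $\Ree g_1 \le a\,\nu(|x|) + o(\nu) \le ac_2\,\nu_M(|x|) + o(\nu_M)$; choosing $a$ with $ac_2 \le \tfrac12$ and absorbing the $o$-term for large $|x|$ into the constant gives the required upper bound. For the lower estimate, $\Ree g_1 \ge a\,\nu(|x|) - o(\nu) \ge \tfrac12 ac_1\,\nu_M(|x|)$ once $|x| \ge R_n$, and I would convert this fixed fraction of $\nu_M$ into the target $\nu_M(\delta_1\,\cdot\,)$ using the scaling property contained in $(M.2)$: from $2\nu_M(t) \le \nu_M(Ht) + \log C_0$ one gets $\nu_M(t/H^k) \le 2^{-k}\nu_M(t) + C_k$, so fixing $k$ with $2^{-k} \le \tfrac12 ac_1$ and $\delta_1 = H^{-k}$ yields $\nu_M(\delta_1|x|) \le \tfrac12 ac_1\,\nu_M(|x|) + C_k \le \Ree g_1(x+iy) + O_n(1)$. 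Because $a$, $k$ and $\delta_1$ are all independent of $n$ --- only the additive constants depend on $n$ --- the resulting $\delta$ is uniform, which is exactly what the statement demands.
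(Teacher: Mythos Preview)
Your proposal leaves open precisely the step you flag as the ``main obstacle'': you never construct the one-variable entire function $g_1$ with the deviation bound $|\Ree g_1(x+iy)-\nu(|x|)|\le C\,\eta(|x|)\,(1+|y|)^N$. You gesture at a ``suitably subtracted Poisson-type (Nevanlinna) integral'' or at writing $g_1(z)=G(z^2)$, but neither is carried out, and neither is routine (the subtracted Nevanlinna representation needs careful control of the subtraction remainder at infinity, and the $G(z^2)$ route merely relocates the problem to constructing $G$). So as it stands the argument is incomplete.

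More importantly, your reason for abandoning convolution regularization is a misconception, and the paper's proof shows that convolution is in fact the simplest route. One sets, directly in $d$ variables,
\[
\widetilde\nu(z)=\pi^{-d/2}\int_{\R^d}\nu(x)\,e^{-(z-x)\cdot(z-x)}\,\dx,\qquad \widetilde P=e^{\widetilde\nu}.
\]
The Gaussian kernel makes $\widetilde\nu$ entire and defined once and for all, with no dependence on the strip width. Since $|e^{-(z-x)\cdot(z-x)}|=e^{-|\Ree z-x|^2+|\Imm z|^2}$, Lemma~\ref{improvement}(ii) (applied radially, using $||x|-|\Ree z||\le|x-\Ree z|$) gives immediately
\[
|\widetilde\nu(z)-\nu(\Ree z)|\le A\,e^{|\Imm z|^2}\,\eta(|\Ree z|),\qquad A=\pi^{-d/2}\!\int_{\R^d}(1+|x|)^Ne^{-|x|^2}\,\dx.
\]
Because $\eta=o(\nu)$, on each tube $V_n$ the factor $Ae^{n^2}$ is just a constant multiplying $\eta$, so one can absorb it as $Ae^{n^2}\eta(t)\le\tfrac12\nu(t)+\log B_n$; this yields $B_n^{-1}e^{\nu(\Ree z)/2}\le|\widetilde P(z)|\le B_ne^{2\nu(\Ree z)}$ on $V_n$, and the proof finishes exactly as in your assembly step, converting $\nu$ back to $\nu_M$ via $\nu\asymp\nu_M$ and rescaling the argument using $(M.2)$. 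There is no ``multiplicative attenuation that decays with $n$'': the only $n$-dependence is the harmless prefactor $e^{n^2}$ in front of the $o(\nu)$ error, which goes into $C_n$. Your one-dimensional reduction and the dimensional bookkeeping with $H^m$ and $\sqrt d$ are correct but entirely unnecessary --- the Gaussian trick works in $\C^d$ in one stroke.
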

\begin{proof}
Choose $\nu$, $\eta$ and $N$ as in Lemma \ref{improvement}. We extend $\nu$  to $\R^d$ as the radial function $\nu(x) = \nu(|x|)$, $x \in \R^d$. We set
$$
\widetilde{\nu}(z) = \frac{1}{\pi^{d/2}} \int_{\R^d} \nu(x) e^{-(z-x)\cdot (z-x)} \dx, \qquad z \in \C^d.
$$
Then, $\widetilde{\nu} \in \mathcal{O}(\C^d)$. For all $z \in \C^d$ it holds that
\begin{equation}
\label{ineq-1-2}
|\widetilde{\nu}(z) - \nu(\Ree z)| = \frac{1}{\pi^{d/2}} \left | \int_{\R^d}(\nu(x) - \nu(\Ree z))  e^{-(z-x)\cdot (z-x)} \dx \right |
%\\& \leq  \frac{e^{|\Imm z|^2}}{\pi^{d/2}}  \int_{\R^d} |\nu(x) - \nu(\Ree z)| e^{-|\Ree z-x|^2} \dx \\
%& \leq  \frac{e^{|\Imm z|^2}}{\pi^{d/2}} \eta(|\Ree z|)  \int_{\R^d} (1+|\Ree z -x|)^N e^{-|\Ree z-x|^2} \dx \\
%&
\leq  Ae^{|\Imm z|^2} \eta(|\Ree z|),
\end{equation}
where $A = \pi^{-d/2} \int_{\R^d} (1+|x|)^Ne^{-|x|^2} \dx$.
%Hence,
%\begin{equation}
%|\Ree \widetilde{\nu}(z) - \nu(\Ree z)| \leq  Ae^{|\Imm z|^2} \eta(|\Ree z|), \qquad  z \in \C^d,
%\label{ineq-1}
%\end{equation}
%and
%\begin{equation}
%\Ree \widetilde{\nu}(z) \geq \nu(\Ree z)  -  Ae^{|\Imm z|^2} \eta(|\Ree z|), \qquad  z \in \C^d.
%\label{ineq-2}
%\end{equation}
Set $\widetilde{P}(z) = e^{\widetilde{\nu}(z)}$, $z \in \C^d$. Then, $\widetilde{P} \in \mathcal{O}(\C^d)$. Let $n \in \N$ be arbitrary. Choose $ B_n > 0$ such that
$$
Ae^{n^2}\eta(t) \leq \frac{1}{2}\nu(t) + \log B_n, \qquad t \geq 0.
$$
Using $\eqref{ineq-1-2}$, we have that
$$
|\widetilde{P}(z)| \leq e^{\nu(\Ree z) +  Ae^{n^2} \eta(|\Ree z|)} \leq B_ne^{2\nu(\Ree z)}, \qquad  z \in V_n,
$$
and
%, by \eqref{ineq-2},
$$
|\widetilde{P}(z)| \geq e^{\nu(\Ree z)  -  Ae^{n^2} \eta(|\Ree z|) } \geq B_n^{-1}e^{\nu(\Ree z)/2}, \qquad  z \in V_n.
$$
Let $L > 1$ be such that
$$
L^{-1}\nu_M(t) - \log L \leq \nu(t) \leq L\nu_M(t) + \log L, \qquad t \geq 0,
$$
and let $K > 1$ be such that
$$
2L \nu_M(t) \leq \nu_M(Kt) + \log K, \qquad t \geq 0.
$$
Set $P(z) = \widetilde{P}(z/K)$, $z \in \C^d$.  Then, $P \in \mathcal{O}(\C^d)$ satisfies the required bounds (with $\delta = 1/K^2$).
%Let $L > 0$ be such that
%$$
%L^{-1}\nu_M(t) - \log L \leq \nu(t) \leq L\nu_M(t) + \log L, \qquad t \geq 0.
%$$
%Then,
%$$
%(B_h\sqrt{L})^{-1}e^{\nu_M(\Ree z)/(2L)}\leq |\widetilde{P}(z)| \leq B_hL^2e^{2L\nu_M(\Ree z)}, \qquad  z \in V_h.
%$$
%Let $K > 0$ be such that
%$$
%2L \nu_M(t) \leq \nu_M(Kt) + \log K, \qquad t \geq 0.
%$$
%Then,
%$$
%(B_h\sqrt{L}K^{1/(2L)})^{-1}e^{\nu_M(\Ree z/K)}\leq |\widetilde{P}(z)| \leq B_hL^2He^{\nu_M(K\Ree z)}, \qquad  z \in V_h.
%$$
%Set $P(z) = \widetilde{P}(z/K)$, $z \in \C^d$. Then, $P \in \mathcal{O}(\C^d)$ satisfies the required bounds (with $\delta = 1/K^2$).
\end{proof}
We now combine the theory developed in Section \ref{Fmultipliers} with the function constructed in Proposition \ref{par-2} to obtain a parametrix type identity that shall be used to show Theorem \ref{factorization}.
\begin{lemma}\label{vsrtln159}
Let $E$ be a sequentially complete lcHs and let $\boldsymbol{\kappa} = (\kappa_p)_{p \in \operatorname{csn}(E)}$ be a net of positive numbers.  Let $M$ be a weight sequence satisfying $(M.2)$ and $(M.2)^*$, and let $h > 0$. Let $\mathbf{B}$ be a bounded subset of $\mathcal{Q}^{M,h}_{{\boldsymbol{\kappa}}}(\R^d;E)$. There exist $g \in  \mathcal{O}'^{M,h/(2H^2)}_C(\R^d)$ and $\psi \in \mathcal{K}^{\{M\}}(\R^d)$ such that $g \ast \mathbf{B}$ is a bounded subset of $\mathcal{Q}^{M,h/H}_{{\boldsymbol{\kappa}}}(\R^d;E)$ and
$$
\psi \ast ( g \ast {{\bf{f}}}) = {{\bf{f}}}, \qquad  {{\bf{f}}} \in \mathbf{B}.
$$
\end{lemma}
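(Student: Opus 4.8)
The plan is to realize the two convolutors $g$ and $\psi$ as the Fourier multiplier $P$ and its reciprocal $1/P$ provided by Proposition \ref{par-2}, so that the asserted identity becomes nothing but the algebraic triviality $(1/P)\cdot P=1$ read through the quantization of Proposition \ref{quantization}. The conceptual work having been done in Sections \ref{boundedsetSection} and \ref{Fmultipliers}, what remains is mostly the correct bookkeeping of spaces and constants.

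First I would produce the symbol and rescale it. Proposition \ref{par-2} furnishes $P_0\in\mathcal{O}(\C^d)$ and $\delta>0$ with $C_n^{-1}e^{\nu_M(\delta\Ree z)}\le|P_0(z)|\le C_ne^{\nu_M(\Ree z)}$ on each $V_n$; in particular $P_0\in\mathcal{P}^{M,1}(\C^d)$. To match the index demanded of $g$, set $h_0=\pi h/(4\sqrt{d}H^3)$ and $P(z)=P_0(h_0 z)$. A rescaling of the tube parameter then gives $P\in\mathcal{P}^{M,h_0}(\C^d)$ and, from the lower bound, $|1/P(z)|\le C_{n'}e^{-\nu_M(\delta h_0\Ree z)}$ on $V_n$, so that $1/P\in\mathcal{U}^{M,\delta h_0}(\C^d)\subset\mathcal{U}^{\{M\}}(\C^d)$; here the fact that $\nu_M\ge 0$ is exactly what turns $1/P$ into a genuine test function rather than merely a multiplier. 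I then put $g=\widetilde{\mathcal F}(P)$ and $\psi=\mathcal F^{-1}(1/P)\in\mathcal{K}^{\{M\}}(\R^d)$. By Proposition \ref{quantization}, $g\in\mathcal{O}'^{M,2\sqrt{d}Hh_0/\pi}_C(\R^d)=\mathcal{O}'^{M,h/(2H^2)}_C(\R^d)$, which is the required membership.

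The boundedness assertion is then immediate: by Proposition \ref{vv-conv} the map $\mathbf{f}\mapsto g\ast\mathbf{f}$ is continuous from $\mathcal{Q}^{M,h}_{\boldsymbol{\kappa}}(\R^d;E)$ into $\mathcal{Q}^{M,h/H}_{\boldsymbol{\kappa}}(\R^d;E)$, and a continuous map carries the bounded set $\mathbf{B}$ to a bounded set. For the parametrix identity I would pass to the Fourier side. Writing $\mathbf{F}=\mathcal F^{-1}(\mathbf{f})\in\mathcal{U}'^{(M)}(\C^d;E)$ for $\mathbf{f}\in\mathbf{B}$, Proposition \ref{quantization} gives $g\ast\mathbf{f}=\widetilde{\mathcal F}(P)\ast\mathcal F(\mathbf{F})=\mathcal F(P\cdot\mathbf{F})$ in $\mathcal{K}'^{(M)}(\R^d;E)$. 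Applying the same proposition to the multiplier $1/P$ (legitimate since $\mathcal{U}^{M,\delta h_0}(\C^d)\subset\mathcal{P}^{M,k}(\C^d)$ for every $k>0$) and to the vector $P\cdot\mathbf{F}\in\mathcal{U}'^{(M)}(\C^d;E)$, together with the identification $\widetilde{\mathcal F}(1/P)=\psi$ furnished by the commutative diagram of Proposition \ref{quantization} through the inclusion \eqref{inc-k-o}, I obtain
$$
\psi\ast(g\ast\mathbf{f})=\widetilde{\mathcal F}(1/P)\ast\mathcal F(P\cdot\mathbf{F})=\mathcal F\big((1/P)\cdot(P\cdot\mathbf{F})\big)=\mathcal F(\mathbf{F})=\mathbf{f},
$$
where the third equality is the associativity $(1/P)\cdot(P\cdot\mathbf{F})=((1/P)P)\cdot\mathbf{F}=\mathbf{F}$, verified by a one-line duality computation using that both $P$ and $1/P$ act as multipliers on $\mathcal{U}^{(M)}(\C^d)$ and that $P\cdot((1/P)\varphi)=\varphi$ pointwise.

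I expect the delicate points to be twofold: (a) tracking the constants through the rescaling so that $g$ lands precisely in $\mathcal{O}'^{M,h/(2H^2)}_C(\R^d)$, which is what dictates the choice of $h_0$; and, more essentially, (b) correctly invoking the commutative diagram of Proposition \ref{quantization} to identify the convolutor $\widetilde{\mathcal F}(1/P)$ with the honest test function $\psi=\mathcal F^{-1}(1/P)$. Step (b) is the main obstacle, for it is exactly the place where the dual role of $1/P\in\mathcal{U}^{\{M\}}(\C^d)$ — simultaneously as a Fourier multiplier and as the Fourier transform of an element of $\mathcal{K}^{\{M\}}(\R^d)$ — is what legitimately converts the algebraic identity $(1/P)\cdot P=1$ into the analytic convolution identity on $\mathcal{K}'^{(M)}(\R^d;E)$.
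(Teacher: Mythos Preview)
Your proposal is correct and follows essentially the same route as the paper's proof: the same rescaling $h_0=\pi h/(4\sqrt{d}H^3)$ of the symbol from Proposition \ref{par-2}, the same definitions $g=\widetilde{\mathcal F}(P)$ and $\psi$ as the (inverse) Fourier transform of $1/P$, and the same appeals to Proposition \ref{quantization} for the parametrix identity and to Proposition \ref{vv-conv} for boundedness. The only cosmetic discrepancy is that the paper writes $\psi=\mathcal F(1/P_h)$ (consistent with the arrow labeled $\mathcal F$ in the commutative diagram of Proposition \ref{quantization}) whereas you write $\mathcal F^{-1}$; just make sure your choice is the one that makes the diagram commute, so that $\widetilde{\mathcal F}(1/P)$ really coincides with $\psi$ under the inclusion \eqref{inc-k-o}.
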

\begin{proof}
Let $P$ be the function from Proposition \ref{par-2}.  Set $P_h(z) = P(\pi h z/(4\sqrt{d}H^3))$, $z \in \C^d$. Then, $P_h \in \mathcal{P}^{M,\pi h/(4\sqrt{d}H^3)}(\C^d)$ and $1/P_h \in \mathcal{U}^{\{M\}}(\C^d)$. We define $ g = \widetilde{\mathcal{F}}(P_h) \in \mathcal{O}'^{M, h/(2H^2)}_C(\R^d)$ (Proposition \ref{quantization}) and $\psi = \mathcal{F}(1/P_h) \in \mathcal{K}^{\{M\}}(\R^d)$. By Proposition \ref{quantization}, we have that for all $ {\bf{f}}  \in \mathcal{K}'^{(M)}(\R^d;E)$
\begin{align*}
&\psi \ast ( g \ast {\bf{f}}) =  \mathcal{F}(1/P_h) \ast ( \widetilde{\mathcal{F}}(P_h) \ast \mathcal{F}(\mathcal{F}^{-1}( {\bf{f}}))) \\
&= \mathcal{F}(1/P_h) \ast \mathcal{F}( P_h \cdot \mathcal{F}^{-1}( {\bf{f}})) = \mathcal{F} ( (1/P_h) \cdot (P_h \cdot \mathcal{F}^{-1}( {\bf{f}}))) =  {\bf{f}}.
\end{align*}
The result now follows from Proposition \ref{vv-conv}.
\end{proof}
Finally, we shall need the ensuing lemma that allows us to reduce the Beurling case to the Roumieu one.
\begin{lemma}\label{kts135179}
Let $M$ be a weight sequence satisfying  $(M.2)$ and $(M.2)^*$. For every $r \in \mathfrak{R}$ there is $r' \in \mathfrak{R}$ with $r' \preceq r$ such that  $N= (M_p/\prod_{j=0}^p r'_j)_{p \in \N}$ is a weight sequence satisfying
$(M.2)$ and $(M.2)^*$.
\end{lemma}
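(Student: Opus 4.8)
The plan is to reformulate everything in terms of the quotient sequences $m_p = M_p/M_{p-1}$. Writing $N_p = M_p/\prod_{j=0}^p r'_j$, one computes $n_p := N_p/N_{p-1} = m_p/r'_p$. Since $r'_0 = r'_1 = 1$ forces $N_0 = N_1 = 1$, since $N$ is log-convex exactly when $(n_p)$ is non-decreasing, and since $\lim_p N_p^{1/p} = \infty$ is implied by $n_p \to \infty$, the task reduces to manufacturing $r' \in \mathfrak{R}$ with $r' \preceq r$ such that $(m_p/r'_p)$ is non-decreasing and tends to infinity, and such that $N$ inherits $(M.2)$ and $(M.2)^*$.

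First I would dispose of $(M.2)$, which turns out to be automatic. Using $N_p = \prod_{j=1}^p n_j$, one has for $p \leq q$ the identity $N_{p+q}/(N_pN_q) = \prod_{j=1}^p n_{q+j}/n_j$, and since $r'$ is non-decreasing, $n_{q+j}/n_j = (m_{q+j}/m_j)(r'_j/r'_{q+j}) \leq m_{q+j}/m_j$. Hence $N_{p+q}/(N_pN_q) \leq M_{p+q}/(M_pM_q) \leq C_0H^{p+q}$, so $N$ satisfies the inequality of $(M.2)$ with the very same constants, for free.

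The core of the argument is then an explicit construction of $r'$. Passing to logarithms, I would set $\lambda_p = \log m_p$ and $R_p = \log r_p$; both are non-decreasing with $\lambda_1 = R_1 = 0$ and tend to $\infty$ (for $\lambda$ this uses $m_p \geq M_p^{1/p} \to \infty$). Define $\rho_p = \log r'_p$ by $\rho_0 = \rho_1 = 0$ and the recursion $\rho_p = \min\bigl(\rho_{p-1} + \tfrac12(\lambda_p - \lambda_{p-1}),\, R_p\bigr)$ for $p \geq 2$: each step raises $\rho$ by at most half of the corresponding increment of $\lambda$, but never past the ceiling $R_p$. A short induction supplies the two facts that drive everything: the increment bound $0 \leq \rho_p - \rho_{p-1} \leq \tfrac12(\lambda_p - \lambda_{p-1})$ (in the ceiling case this uses $R_p \leq \rho_{p-1} + \tfrac12(\lambda_p-\lambda_{p-1})$) and the absolute bound $\rho_p \leq \tfrac12\lambda_p$. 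The increment bound gives $\rho_p - \rho_{p-1} \leq \lambda_p - \lambda_{p-1}$, i.e.\ $(n_p)$ non-decreasing (log-convexity of $N$); the ceiling gives $r' \preceq r$; and $\rho_p \leq \tfrac12\lambda_p$ gives $\lambda_p - \rho_p \geq \tfrac12\lambda_p \to \infty$, so $n_p \to \infty$. Finally $\rho_p \to \infty$ (needed for $r' \in \mathfrak{R}$) holds because, were $\rho$ bounded, the ceiling $R_p \to \infty$ would eventually be inactive and $\rho$ would then grow like $\tfrac12\lambda_p$.

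It remains to recover $(M.2)^*$ for $N$, which is where the constants get fixed. Writing this condition as $\rho_{N'p} - \rho_p \leq (\lambda_{N'p} - \lambda_p) - \log 2$, I would take $N' = N^2$: iterating $(M.2)^*$ for $M$ in the form $\lambda_{Np} - \lambda_p \geq \log 2$ ($p \geq p_0$) yields $\lambda_{N^2p} - \lambda_p \geq 2\log 2$, while telescoping the increment bound gives $\rho_{N^2p} - \rho_p \leq \tfrac12(\lambda_{N^2p} - \lambda_p)$; combining, $\rho_{N^2p} - \rho_p \leq (\lambda_{N^2p}-\lambda_p) - \tfrac12(\lambda_{N^2p}-\lambda_p) \leq (\lambda_{N^2p}-\lambda_p) - \log 2$ for $p \geq p_0$. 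So $N$ satisfies $(M.2)^*$ with $N^2$ and $p_0$. The main obstacle is exactly designing $\rho$ to reconcile three competing demands simultaneously: it must climb to $\infty$ (to keep $r' \in \mathfrak{R}$), remain under the possibly very slow ceiling $R$ (to keep $r' \preceq r$), and yet grow slowly and regularly enough relative to $\lambda$ that both log-convexity and $(M.2)^*$ are preserved. The \emph{half-increment capped by $R_p$} recursion is the device that makes all three compatible at once.
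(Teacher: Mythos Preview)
Your proof is correct and follows essentially the same approach as the paper: both constructions define $r'$ via a recursion that caps by $r$ and grows by at most (a fraction of) the increment of $m$, and both recover $(M.2)^*$ through a halving/square-root device. The only difference is cosmetic---the paper first builds an auxiliary $r''_j=\min\{r_j,\,r''_{j-1}m_j/m_{j-1}\}$ and then sets $r'_j=\sqrt{r''_j}$, whereas you fold the square root into the recursion by working additively with half-increments of $\log m$.
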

\begin{proof}
Define $r''_0 = r''_1= 1$ and recursively $r''_j=\min\{r_j, r''_{j-1}m_j/m_{j-1}\}$ for $j \geq 2$. Since $r$ is non-decreasing and $M$ is log-convex, we have that
\beqs
r''_{j+1}=\min\{r_{j+1}, r''_jm_{j+1}/m_j\}\geq \min\{r_j, r''_j\}=r''_j, \qquad j\in\Z_+,
\eeqs
which means that $r''$  is non-decreasing. Next, assume that there is $C\geq 1$ such that $r''_j\leq C$ for all  $j \in \Z_+$. Since $r_j \to \infty$, there is $j_0\geq 2$ such that $r''_j=r''_{j-1}m_j/m_{j-1}$ for all $j\geq j_0$. But then
\beqs
r''_j&=&\frac{m_j}{m_{j-1}}\cdot r''_{j-1}=\frac{m_j}{m_{j-1}}\cdot\frac{m_{j-1}}{m_{j-2}}\cdot r''_{j-2}
=\cdots
=\frac{m_j}{m_{j_0-1}}\cdot r''_{j_0-1}\rightarrow \infty,\,\, \mbox{as}\,\, j\rightarrow \infty,
\eeqs
a contradiction. Hence, $r'' \in\mathfrak{R}$. Moreover, it is clear that $r'' \preceq r$ and that  $m/r''$ is non-decreasing. Define $r'_j=\sqrt{r''_j}$, $j\in \N$. We now show that the sequence $r'$ satisfies all requirements. Note first that $r'\in\mathfrak{R}$ and $r' \preceq r'' \preceq r$. As $m/r''$ and $r''$ are non-decreasing,  $m/r'$ is also non-decreasing. This means that the sequence $N= (M_p/\prod_{j=0}^p r'_j)_{p \in \N}$ is log-convex. It is obvious that $N$  satisfies $(M.2)$. Let us verify that $N$ also satisfies $(M.2)^*$. Choose $p_0,N \in \Z_+$ such that $4m_p \leq m_{Np}$ for all $p \geq p_0$. Since $m/r''$ is non-decreasing, we have that
$$
n_{Np} = \frac{m_{Np}}{r'_{Np}} =  \sqrt{m_{Np}} \sqrt{\frac{m_{Np}}{r''_{Np}}} \geq 2 \sqrt{m_{p}} \sqrt{\frac{m_{p}}{r''_{p}}} = 2n_p, \qquad p \geq p_0.
$$
As $N$ is log-convex, $(M.2)$ and $(M.2)^*$ imply that $\lim_{p \to \infty} N_p^{1/p} = \infty$.
\end{proof}

\begin{proof}[Proof of Theorem \ref{factorization}] A standard argument together with Proposition \ref{char-E*} shows that $\Pi(\mathcal{K}^{[M]}(\R^d))E \subseteq E^{[M]}$ (in the Beurling case, one also needs a two dimensional variant of Lemma \ref{klemma}; see for example \cite[Lemma 2.2.1, p. 18]{PilipovicK}).\\
\indent For the proof of $E^{[M]}\subseteq \Pi(\mathcal{K}^{[M]}(\R^d))E^{[M]}$ and the rest of the theorem it suffices to consider the Roumieu case as, by Lemma \ref{kts135179} and Proposition \ref{char-E*}, the Beurling case follows from it. Suppose that $(\pi,E)$ is a projective (inductive) generalized proto-Banach representation. Let $\widetilde B$ be an arbitrary bounded subset of  $E^{\{M\}}$ and set $\mathbf{B} = \{ \gamma_e \, | \, e \in \widetilde B\}$. By Proposition \ref{char-E*}, there is $h > 0$ ($h > 0$ and $B \in \mathfrak{B}(E)$) such that $\mathbf{B}$ is a bounded subset of $\mathcal{Q}^{M,h}_{{\boldsymbol{\kappa}}}(\R^d;E)$ ($\mathcal{Q}^{M,h}_{\kappa_B}(\R^d;E_{A_B})$).  Lemma \ref{vsrtln159} yields the existence of $g \in  \mathcal{O}'^{M,h/(2H^2)}_C(\R^d)$ and $\psi \in \mathcal{K}^{\{M\}}(\R^d)$ such that $g \ast \mathbf{B}$ is a bounded subset of $\mathcal{Q}^{M,h/H}_{{\boldsymbol{\kappa}}}(\R^d;E)$ ($\mathcal{Q}^{M,h/H}_{\kappa_B}(\R^d;E_{A_B})$) and
$$
\psi \ast ( g \ast  \gamma_e) =  \gamma_e, \qquad  e \in \widetilde B.
$$
Let $\{g_\alpha \in C_{\operatorname{exp}}(\R^d) \, | \, \alpha \in \N^d \} $ be as in Lemma \ref{structural} (with $h/2H^2$ instead of $h$). Set
$$
g \ast e = \sum_{\alpha \in \N^d} \int_{\R^d} g_\alpha(t) \gamma_{e_\alpha}(-t) dt, \qquad  e \in \widetilde B.
$$
Then, $g \ast e \in E$ ($g \ast e \in E_{A_{A_B}}$, where $A_{A_B}\in \mathfrak{B}(E)$ is the set that corresponds to $A_B\in \mathfrak{B}(E)$ in \eqref{inductive}).  By using \eqref{ultrapolsub}, we find that
$
g \ast \gamma_e(x) = \gamma_{g \ast e}(x),$ $e \in \widetilde{B}.$
Hence, Proposition \ref{char-E*} yields that $\widetilde A = \{ g \ast e \, | \, e \in \widetilde B\}$ is a bounded subset of $E^{\{M\}}$ and
$$
\gamma_e(x) =( \psi \ast  \gamma_{g \ast e})(x) = \int_{\R^d} \check{\psi}(t)\gamma_{g \ast e}(x+t) \dt, \qquad e \in \widetilde B.
$$
Evaluating at $x = 0$, we obtain that $e = \Pi(\check{\psi})(g\ast e)$ for all $e \in \widetilde B$.
\end{proof}
\begin{remark}\label{comp-fact}
Let $(\pi,E)$ be a Banach representation. Let $M$ be a weight sequence satisfying $(M.2)$ and $(M.2)^*$. In view of Remark \ref{Banach-top}, a similar argument as in the proof of Theorem \ref{factorization} yields that for every compact set $\widetilde B \subset E^{[M]}$ there is $\psi \in \mathcal{K}^{[M]}(\R^d)$ and a compact set $\widetilde A \subseteq E^{[M]}$ such that $\Pi(\psi)\widetilde A= \widetilde B$.
\end{remark}

\section{The strong factorization property for convolution algebras and modules of ultradifferentiable functions}\label{sect-application}
In this section we employ Theorem \ref{factorization} to establish factorization theorems for concrete instances of convolution algebras and modules of ultradifferentiable functions. We use the symbol $\hookrightarrow$ to denote a dense and continuous inclusion.
We  start by introducing Gelfand-Shilov spaces;  we refer to \cite{ppv} for more information on these spaces. Let $M$ and $A$ be two weight sequences. For $h,k > 0$ we write $\mathcal{S}^{M,h}_{A,k}(\R^d)$ for the Banach space consisting of all $\varphi \in C^\infty(\R^d)$ such that
$$
\| \varphi \|_{\mathcal{S}^{M,h}_{A,k}} :=\sup_{\alpha \in \N^d} \sup_{x \in \R^d} \frac{h^{|\alpha|}|\partial^{\alpha}\varphi(x)|e^{\nu_A(k x)}}{M_{\alpha}} < \infty.
$$
We define
$$
\mathcal{S}^{(M)}_{(A)}(\R^d) = \varprojlim_{h \to \infty} \mathcal{S}^{M,h}_{A,h}(\R^d) \qquad \mbox{and} \qquad \mathcal{S}^{\{M\}}_{\{A\}}(\R^d) = \varinjlim_{h \to 0^+} \mathcal{S}^{M,h}_{A,h}(\R^d).
$$
We denote by $\mathcal{S}'^{[M]}_{[A]}(\R^d)$ the strong dual of $\mathcal{S}^{[M]}_{[A]}(\R^d)$. The spaces $\mathcal{S}^{\{p!^\sigma\}}_{\{p!^\tau\}}(\R^d)$, $\sigma, \tau > 0$, were introduced by Gelfand and Shilov \cite{GelfandShilovVol2}, while the spaces $\mathcal{S}^{\{p!\}}_{\{p!\}}(\R^d)$ and $\mathcal{S}^{(p!)}_{(p!)}(\R^d)$ are the test function spaces for the Fourier hyperfunctions \cite{Kawai} and the Fourier ultrahyperfunctions \cite{PM}, respectively. Note that $\mathcal{K}^{(M)}(\R^d) = \mathcal{S}^{(M)}_{(p!)}(\R^d)$ and $\displaystyle \mathcal{K}^{\{M\}}(\R^d) =  \bigcup_{h > 0} \bigcap_{k >0} \mathcal{S}^{M,h}_{p!,k}(\R^d)$.

\subsection{Translation-invariant Banach spaces} Fix a weight sequence $M$ satisfying $(M.2)$ and $(M.2)^*$, and a weight sequence $A$ satisfying $(M.2)$ and $p! \subset A$.  Consequently, $\mathcal{K}^{[M]}(\R^d) \subseteq \mathcal{S}^{[M]}_{[A]}(\R^d)$.

Following \cite{D-P-P-V,D-P-V}, we call a Banach space $E$ a \emph{translation-invariant Banach space (TIB) of class ${[M]}-{[A]}$} if the following three properties are satisfied:
\begin{itemize}
\item[$(i)$] $\mathcal{S}^{{[M]}}_{{[A]}}(\R^d) \hookrightarrow E \hookrightarrow \mathcal{S}'^{{[M]}}_{{[A]}}(\R^d)$;
\item[$(ii)$] $T_x(E) \subseteq E$ for all $x \in \R^d$;
\item[$(iii)$] There are $\kappa, C > 0$ (for every $\kappa > 0$ there is $C > 0$) such that\footnote{The continuity of the mappings $T_x: E \rightarrow E$, $x \in \R^d$, follows from the closed graph theorem.}
$$
\omega(x) := \| T_x\|_{\mathcal{L}_b(E,E)} \leq Ce^{\nu_A(\kappa x)}, \qquad x \in \R^d.
$$
\end{itemize}
\begin{example}\label{example-TIB}
Let $w$ be a positive measurable function on $\R^d$ such that
$$
w(x + y) \leq Cw(x)e^{\nu_A(\kappa y)}, \qquad x,y \in \R^d,
$$  for some $\kappa, C> 0$ (for every $\kappa > 0$ and some $C=C_{\kappa} > 0$).
For $p \in [1,\infty]$, we denote by $L^p_w(\R^d)$ the space of (equivalence classes) of measurable functions $f$ on $\R^d$ such that $\|f\|_{L^p_w} := \|fw\|_{L^p} < \infty$. Furthermore, $C_{w,0}(\R^d)$ stands for the Banach space of  continuous functions $f$ on $\R^d$ such that
$\lim_{|x| \rightarrow \infty} f(x)w(x) = 0$ endowed with the norm  $\|\: \:\|_{L^\infty_w}$. The spaces $L^p_w(\R^d)$, $1\leq p < \infty$, and $C_{w,0}(\R^d)$ are TIB of class ${[M]}-{[A]}$.
\end{example}

\smallskip

Fix a TIB $E$ of class ${[M]}-{[A]}$.   We have that
$$
T: \R^d \times E \rightarrow E:\ (x,e) \mapsto T_{x}(e)
$$
is continuous. This means that $(T,E)$ is a Banach representation.  One shows \cite[Proposition 3.10]{D-P-V} that the convolution mapping $\ast : \mathcal{S}^{[M]}_{{[A]}}(\R^d) \times \mathcal{S}^{{[M]}}_{{[A]}}(\R^d) \rightarrow \mathcal{S}^{{[M]}}_{{[A]}}(\R^d)$ uniquely extends to a continuous bilinear mapping $\ast: L^1_\omega(\R^d) \times E \rightarrow E$.  By \cite[Lemma 3.7]{D-P-V}  and  the fact that $\mathcal{K}^{[M]}(\R^d)$ is dense in both $C_{\operatorname{exp}}(\R^d)$ and $E$, we have that
$$
\Pi(f)e = f \ast e, \qquad f \in C_{\operatorname{exp}}(\R^d),\, e \in E.
$$

For $h >0$ we write $\mathcal{D}^{M,h}_E$ for the Banach space consisting of all $\varphi \in E$ such that\footnote{$\partial^\alpha \varphi$ stands here for the $\alpha$-th $\mathcal{S}'^{[M]}_{[A]}(\R^d)$-derivative of $\varphi$.} $\partial^\alpha \varphi \in E$  for all $\alpha \in \N^d$ and
$$
\sup_{\alpha \in \N^d} \frac{h^{|\alpha|} \|\partial^\alpha \varphi \|_E}{M_\alpha} < \infty.
$$
We then define
$$
\mathcal{D}^{(M)}_E = \varprojlim_{h \to \infty} \mathcal{D}^{M,h}_E  \qquad \mbox{and}\qquad \mathcal{D}^{\{M\}}_E = \varinjlim_{h \to 0^+} \mathcal{D}^{M,h}_E.
$$
It holds that
$$
\mathcal{S}^{{[M]}}_{[A]}(\R^d) \hookrightarrow \mathcal{D}^{[M]}_E \hookrightarrow E \hookrightarrow \mathcal{S}'^{{[M]}}_{[A]}(\R^d).
$$
Moreover, one can show that the elements $\varphi$ of $\mathcal{D}^{[M]}_E$ are in fact smooth functions and that they satisfy
$$
\sup_{\alpha \in \N^d} \sup_{x\in\R^d}\frac{h^{|\alpha|} |\partial^{\alpha}\varphi(x)|}{M_\alpha \omega(-x)} < \infty
$$
for all $h > 0$ (for some $h > 0$) \cite[Proposition 4.7]{D-P-V}.

We write $\mathcal{D}_{E}$ for the space  consisting of all those $\varphi \in E$ such that $\partial^\alpha \varphi \in E$ for all $\alpha \in \N^d$.
Reasoning as in \cite[Lemma 5.7]{Debrouwere}, it can be shown that $E^\infty = \mathcal{D}_E$ and $\varphi_\alpha = \partial^\alpha \varphi$ for all $\alpha \in \N^d$ and $\varphi \in E^{\infty}$. Hence, $E^{[M]} = \mathcal{D}^{[M]}_E$ as locally convex spaces.

Theorem \ref{factorization} and Remark \ref{comp-fact} yield the following result.

\begin{theorem}\label{factorization-TIB} We have that
$$
\mathcal{K}^{[M]}(\R^d) \ast E = \mathcal{K}^{[M]}(\R^d) \ast \mathcal{D}^{[M]}_E  = \mathcal{D}^{[M]}_E.
$$
Furthermore, for every bounded (compact)  set $B \subset \mathcal{D}^{[M]}_E$ there are $\psi \in \mathcal{K}^{[M]}(\R^d)$ and a bounded (compact)  set $A \subset \mathcal{D}^{[M]}_E$ such that $\psi \ast A = B$.
\end{theorem}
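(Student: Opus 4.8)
The plan is to obtain this theorem as a direct consequence of Theorem \ref{factorization} and Remark \ref{comp-fact}, using the identifications already assembled in this subsection. The crucial point is that $(T,E)$ is a Banach representation, hence simultaneously a projective and an inductive generalized proto-Banach representation, so that Theorem \ref{factorization} is applicable to it. Moreover, one has established here that $\Pi(f)e = f \ast e$ for all $f \in C_{\operatorname{exp}}(\R^d)$ and $e \in E$, that $E^\infty = \mathcal{D}_E$ with $\varphi_\alpha = \partial^\alpha \varphi$, and that $E^{[M]} = \mathcal{D}^{[M]}_E$ as locally convex spaces. Since $\mathcal{K}^{[M]}(\R^d) \subseteq C_{\operatorname{exp}}(\R^d)$ (which is immediate from the defining seminorms), the action of $\mathcal{K}^{[M]}(\R^d)$ through $\Pi$ is precisely convolution.

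First I would rewrite the chain of equalities furnished by Theorem \ref{factorization}, namely
$$
\Pi(\mathcal{K}^{[M]}(\R^d)) E = \Pi(\mathcal{K}^{[M]}(\R^d)) E^{[M]} = E^{[M]},
$$
by substituting $\Pi(f)e = f \ast e$ and $E^{[M]} = \mathcal{D}^{[M]}_E$. This immediately yields the displayed identity $\mathcal{K}^{[M]}(\R^d) \ast E = \mathcal{K}^{[M]}(\R^d) \ast \mathcal{D}^{[M]}_E = \mathcal{D}^{[M]}_E$, settling the first assertion.

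For the statement on bounded sets I would invoke the second part of Theorem \ref{factorization}: given a bounded set $B \subset \mathcal{D}^{[M]}_E = E^{[M]}$, there exist $\psi \in \mathcal{K}^{[M]}(\R^d)$ and a bounded set $A \subseteq E^{[M]}$ with $\Pi(\psi) A = B$, that is, $\psi \ast A = B$. The one point requiring care is that the bornology on $E^{[M]}$ underlying Theorem \ref{factorization} must agree with the bounded sets of $\mathcal{D}^{[M]}_E$; this I would justify through Remark \ref{Banach-top}, which shows that for a Banach representation the original bornology on $E^{[M]}$ coincides with the one induced by the natural locally convex topology $\tau$, combined with the already recorded identity $E^{[M]} = \mathcal{D}^{[M]}_E$ as locally convex spaces. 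The compact case is handled in exactly the same manner, replacing Theorem \ref{factorization} by Remark \ref{comp-fact}.

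Since the two reformulations $\Pi = \ast$ and $E^{[M]} = \mathcal{D}^{[M]}_E$ are already in place, there is no genuine obstacle; the sole delicate step is the bornological identification in the bounded-set statement, which the above use of Remark \ref{Banach-top} resolves.
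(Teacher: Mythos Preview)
Your proposal is correct and follows exactly the paper's own route: the paper simply states that Theorem \ref{factorization} and Remark \ref{comp-fact} yield the result, relying on the identifications $\Pi(f)e = f\ast e$ and $E^{[M]} = \mathcal{D}^{[M]}_E$ already recorded in the subsection. Your write-up just makes explicit the bornological matching via Remark \ref{Banach-top}, which the paper leaves implicit.
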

\begin{corollary} \label{factorization-TIB-cor}
We have that
$$
\mathcal{S}^{[M]}_{[A]}(\R^d) \ast E = \mathcal{S}^{[M]}_{[A]}(\R^d) \ast \mathcal{D}^{[M]}_E  = \mathcal{D}^{[M]}_E.
$$
Moreover, for every bounded (compact) set $B \subset \mathcal{D}^{[M]}_E$ there are $\psi \in \mathcal{S}^{[M]}_{[A]}(\R^d)$ and a bounded (compact) set $A \subset \mathcal{D}^{[M]}_E$ such that $\psi \ast A = B$.
\end{corollary}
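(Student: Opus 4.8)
The plan is to derive Corollary \ref{factorization-TIB-cor} formally from Theorem \ref{factorization-TIB}, using two facts already recorded in this subsection: the inclusion $\mathcal{K}^{[M]}(\R^d) \subseteq \mathcal{S}^{[M]}_{[A]}(\R^d)$ and the continuous embeddings $\mathcal{S}^{[M]}_{[A]}(\R^d) \hookrightarrow \mathcal{D}^{[M]}_E \hookrightarrow E$. The single genuinely new ingredient is the mapping property
$$
\mathcal{S}^{[M]}_{[A]}(\R^d) \ast E \subseteq \mathcal{D}^{[M]}_E,
$$
that is, the fact that convolving an arbitrary element of $E$ with a function from the \emph{larger} space $\mathcal{S}^{[M]}_{[A]}(\R^d)$ still lands in $\mathcal{D}^{[M]}_E$. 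I expect the verification of this inclusion, and inside it a weight-integrability estimate, to be the only real obstacle; everything else is formal.

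For the chain of equalities I would argue as follows. By Theorem \ref{factorization-TIB} one has $\mathcal{D}^{[M]}_E = \mathcal{K}^{[M]}(\R^d) \ast \mathcal{D}^{[M]}_E$, so using $\mathcal{K}^{[M]}(\R^d) \subseteq \mathcal{S}^{[M]}_{[A]}(\R^d)$ and $\mathcal{D}^{[M]}_E \subseteq E$,
$$
\mathcal{D}^{[M]}_E = \mathcal{K}^{[M]}(\R^d) \ast \mathcal{D}^{[M]}_E \subseteq \mathcal{S}^{[M]}_{[A]}(\R^d) \ast \mathcal{D}^{[M]}_E \subseteq \mathcal{S}^{[M]}_{[A]}(\R^d) \ast E.
$$
Together with the reverse inclusion $\mathcal{S}^{[M]}_{[A]}(\R^d) \ast E \subseteq \mathcal{D}^{[M]}_E$ this pins all three sets to $\mathcal{D}^{[M]}_E$, which is the first assertion.

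The reverse inclusion is where I would spend the effort. Fix $\psi \in \mathcal{S}^{[M]}_{[A]}(\R^d)$ and $e \in E$, say $\psi \in \mathcal{S}^{M,h}_{A,k}(\R^d)$. First I would note that $\psi \in L^1_\omega(\R^d)$, so that $\psi \ast e \in E$ is defined through the extension $\ast : L^1_\omega(\R^d) \times E \to E$, and that $\mathcal{S}^{[M]}_{[A]}(\R^d)$ is stable under differentiation, whence $\partial^\alpha(\psi \ast e) = (\partial^\alpha \psi) \ast e \in E$ for every $\alpha$. Using $\|T_x e\|_E \leq \omega(x)\|e\|_E$, hence $\|f \ast e\|_E \leq \|f\|_{L^1_\omega}\|e\|_E$, together with $|\partial^\alpha \psi(x)| \leq \|\psi\|_{\mathcal{S}^{M,h}_{A,k}} M_\alpha h^{-|\alpha|} e^{-\nu_A(kx)}$, I would estimate
$$
\|\partial^\alpha(\psi \ast e)\|_E \leq \|\partial^\alpha \psi\|_{L^1_\omega}\|e\|_E \leq \|\psi\|_{\mathcal{S}^{M,h}_{A,k}}\|e\|_E \, \frac{M_\alpha}{h^{|\alpha|}} \int_{\R^d} e^{-\nu_A(kx)}\omega(x)\,\dx.
$$
The crux is the finiteness of this integral. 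Since $\omega(x) \leq C e^{\nu_A(\kappa x)}$ — with $\kappa$ that may be taken arbitrarily small in the Roumieu case, while in the Beurling case $\psi$ decays at every rate $k$ — I would arrange $H\kappa \leq k$ and invoke $(M.2)$ for $A$, in the form $\nu_A(\kappa t) \leq \tfrac12 \nu_A(kt) + \tfrac12 \log C_0$, to bound the integrand by a constant multiple of $e^{-\frac12 \nu_A(k|x|)}$. As $e^{-\frac12\nu_A(k|x|)}$ decays faster than any power of $|x|$, the integral converges, and therefore $\sup_\alpha h^{|\alpha|}\|\partial^\alpha(\psi \ast e)\|_E/M_\alpha < \infty$, i.e. $\psi \ast e \in \mathcal{D}^{M,h}_E \subseteq \mathcal{D}^{[M]}_E$, as required.

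Finally, the factorization of bounded (compact) sets needs no new input: given a bounded (compact) set $B \subset \mathcal{D}^{[M]}_E$, Theorem \ref{factorization-TIB} already supplies $\psi \in \mathcal{K}^{[M]}(\R^d)$ and a bounded (compact) set $A \subset \mathcal{D}^{[M]}_E$ with $\psi \ast A = B$; since $\psi \in \mathcal{K}^{[M]}(\R^d) \subseteq \mathcal{S}^{[M]}_{[A]}(\R^d)$, the same pair witnesses the corresponding statement with $\mathcal{S}^{[M]}_{[A]}(\R^d)$ in place of $\mathcal{K}^{[M]}(\R^d)$.
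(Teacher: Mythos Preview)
Your proof is correct and follows exactly the approach the paper has in mind; indeed, the paper states the corollary without proof, treating it as an immediate consequence of Theorem \ref{factorization-TIB} together with the inclusions $\mathcal{K}^{[M]}(\R^d) \subseteq \mathcal{S}^{[M]}_{[A]}(\R^d) \hookrightarrow \mathcal{D}^{[M]}_E$. The only step that is not entirely formal is the reverse inclusion $\mathcal{S}^{[M]}_{[A]}(\R^d) \ast E \subseteq \mathcal{D}^{[M]}_E$, and your verification of it via the $L^1_\omega$-module structure and the $(M.2)$-estimate for $\nu_A$ is precisely the standard argument one would supply here.
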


We shall now apply Corollary \ref{factorization-TIB-cor} to improve a useful convolution average characterization of the dual of $\mathcal{D}^{[M]}_{E}$ obtained in \cite{D-P-P-V,D-P-V}. We start with a brief discussion of the strong dual $E'$ of $E$.
The convolution on $E'$ is defined via transposition:
$$
\langle f \ast e', e \rangle := \langle e', \check{f} \ast e \rangle, \qquad e \in E,
$$
for $f \in L^1_\omega(\R^d)$ and $e' \in E'$.
We set $E'_\ast := L^1_\omega(\R^d) \ast E'$. Since the Beurling algebra $(L^1_\omega(\R^d),\ast)$ admits bounded approximation units, the Cohen-Hewitt factorization theorem implies that $E'_\ast$ is a closed linear subspace of $E'$.
Moreover, we have the following explicit description of $E'_\ast$ \cite[Proposition 3.18]{D-P-V},
$$
E'_\ast = \{ e' \in E' \, | \, \lim_{x \to 0} \| T_x e' - e'\|_{E'} = 0 \}.
$$
In general, $E'$ is not a TIB of class ${[M]}-{[A]}$: The embedding $\mathcal{S}^{{[M]}}_{{[A]}}(\R^d) \rightarrow E'$ needs not be dense and the mappings $\R^d \to E', x \mapsto T_x e'$, $e' \in E'$ fixed, may fail to be continuous; consider, e.g., $E = L^1(\R^d)$. This causes various difficulties and is one of the reason why the space $E'_\ast$ was introduced in \cite{D-P-V}.
If $E$ is reflexive, $E'$ is a TIB of class ${[M]}-{[A]}$ and $E' = E'_\ast$ \cite[Theorem 3.17]{D-P-V}.

We denote by $\mathcal{D}'^{[M]}_{E'_\ast}$ the strong dual of $\mathcal{D}^{[M]}_E$. We have the following continuous inclusions
$$
\mathcal{S}^{{[M]}}_{[A]}(\R^d) \rightarrow E' \rightarrow \mathcal{D}'^{[M]}_{E'_\ast} \rightarrow \mathcal{S}'^{{[M]}}_{[A]}(\R^d).
$$
Moreover, the following characterization of $\mathcal{D}'^{[M]}_{E'_\ast}$ in terms of convolution averages holds \cite[Theorem 4.9]{D-P-V}
$$
\mathcal{D}'^{[M]}_{E'_\ast} = \{ f \in \mathcal{S}'^{[M]}_{[A]}(\R^d) \, | \, f \ast \varphi \in E'_\ast \mbox{ for all } \varphi \in \mathcal{S}^{[M]}_{[A]}(\R^d)\}.
$$
The above equality suggests to embed the space $\mathcal{D}'^{[M]}_{E'_\ast}$ into the space of $E'_\ast$-valued ultradistributions $\mathcal{S}'^{[M]}_{[A]}(\R^d; E'_\ast) = \mathcal{L}_b(\mathcal{S}^{[M]}_{[A]}(\R^d), E'_\ast)$ by regarding the elements of $\mathcal{D}'^{[M]}_{E'_\ast}$ as kernels of convolution operators:
$$
\iota: \mathcal{D}'^{[M]}_{E'_\ast} \rightarrow \mathcal{S}'^{[M]}_{[A]}(\R^d; E'_\ast),\,\,\,\,\langle \iota(f), \varphi \rangle = f \ast \varphi,\,\,\, \varphi \in \mathcal{S}^{[M]}_{[A]}(\R^d).
$$
The mapping $\iota$ is a well-defined continuous inclusion with closed range \cite[p.~168-169]{D-P-V}. We now show that $\iota$ is actually a topological embedding.

\begin{proposition}\label{embedding-TIB}
The mapping $\iota$ is a topological embedding.
\end{proposition}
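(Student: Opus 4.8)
The plan is to use that $\iota$ is already a continuous injection with closed range, so it only remains to prove that its inverse, defined on the range $\iota(\mathcal{D}'^{[M]}_{E'_\ast})$, is continuous; equivalently, that the strong dual topology of $\mathcal{D}'^{[M]}_{E'_\ast}$ coincides with the topology induced on it by $\iota$ through $\mathcal{S}'^{[M]}_{[A]}(\R^d;E'_\ast) = \mathcal{L}_b(\mathcal{S}^{[M]}_{[A]}(\R^d),E'_\ast)$. Since $E'_\ast$ is a closed subspace of the Banach space $E'$, hence itself a Banach space, the induced topology is generated by the seminorms $f \mapsto \sup_{\chi\in B'}\|f\ast\chi\|_{E'}$, with $B'$ ranging over the bounded subsets of $\mathcal{S}^{[M]}_{[A]}(\R^d)$, whereas the strong dual topology is generated by $f\mapsto\sup_{\varphi\in B}|\langle f,\varphi\rangle|$, with $B$ ranging over the bounded subsets of $\mathcal{D}^{[M]}_E$. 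The continuity of $\iota$ already gives that every seminorm of the first type is dominated by one of the second; the content of the proposition is the reverse domination.

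To obtain it I would fix a bounded set $B\subseteq\mathcal{D}^{[M]}_E$ and apply the factorization from Corollary \ref{factorization-TIB-cor}: there are $\psi\in\mathcal{S}^{[M]}_{[A]}(\R^d)$ and a bounded set $A\subseteq\mathcal{D}^{[M]}_E$ with $\psi\ast A = B$, so that every $\varphi\in B$ equals $\psi\ast\chi$ for some $\chi\in A$. The decisive step is then the convolution identity
\[
\langle f, \psi\ast\chi\rangle = \langle f\ast\check\psi, \chi\rangle, \qquad f\in\mathcal{D}'^{[M]}_{E'_\ast},\ \chi\in\mathcal{D}^{[M]}_E,
\]
in which the right-hand bracket is the pairing between $E'$ and $E$ and $f\ast\check\psi\in E'_\ast$ because $\check\psi\in\mathcal{S}^{[M]}_{[A]}(\R^d)$. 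Granting this, the boundedness of $A$ in $\mathcal{D}^{[M]}_E\hookrightarrow E$ gives $C_A := \sup_{\chi\in A}\|\chi\|_E<\infty$, and hence
\[
\sup_{\varphi\in B}|\langle f,\varphi\rangle| = \sup_{\chi\in A}|\langle f,\psi\ast\chi\rangle| = \sup_{\chi\in A}|\langle f\ast\check\psi,\chi\rangle| \leq C_A\,\|f\ast\check\psi\|_{E'}
\]
for every $f\in\mathcal{D}'^{[M]}_{E'_\ast}$. The right-hand side is precisely a seminorm of the first type attached to the singleton (hence bounded) set $B'=\{\check\psi\}\subseteq\mathcal{S}^{[M]}_{[A]}(\R^d)$, which is the required reverse domination and shows that $\iota$ is a topological embedding.

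The heart of the argument, and the step I expect to require the most care, is the convolution identity displayed above: it transfers a bracket on $\mathcal{D}^{[M]}_E$ into the $E'$--$E$ pairing of the convolution $f\ast\check\psi$ that the embedding $\iota$ computes. For $f=e'\in E'$ it is exactly the defining relation $\langle g\ast e',e\rangle=\langle e',\check g\ast e\rangle$ of the convolution on $E'$ (take $g=\check\psi\in L^1_\omega(\R^d)$ and $e=\chi$); both sides are continuous linear functions of $f$ that agree on $\mathcal{S}^{[M]}_{[A]}(\R^d)$ and on $E'$, so the identity propagates to all of $\mathcal{D}'^{[M]}_{E'_\ast}$ through the continuity and density properties of the convolution calculus developed in \cite{D-P-V}. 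The two seminorm descriptions used above, namely the Banach description of the topology of $\mathcal{L}_b(\mathcal{S}^{[M]}_{[A]}(\R^d),E'_\ast)$ and the strong dual description of the topology of $\mathcal{D}'^{[M]}_{E'_\ast}$, are routine and require no factorization.
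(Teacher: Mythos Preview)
Your proof is correct and follows essentially the same approach as the paper: both arguments reduce to showing that each polar seminorm $f\mapsto\sup_{\varphi\in B}|\langle f,\varphi\rangle|$ is dominated by one of the form $f\mapsto\|f\ast\check\psi\|_{E'}$, and both obtain this by factorizing $B=\psi\ast A$ via Corollary \ref{factorization-TIB-cor} and using the identity $\langle f,\psi\ast\chi\rangle=\langle f\ast\check\psi,\chi\rangle$. The paper invokes the latter identity without further comment (it is part of the convolution calculus of \cite{D-P-V}), whereas you sketch a density argument for it; otherwise the proofs are the same.
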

\begin{proof}
By the above remarks, we only need to show that $\iota$ is open, i.e., that for every neighborhood $U$ of zero in $\mathcal{D}'^{[M]}_{E'_\ast}$ there is a neighborhood $V$ of zero in $\mathcal{S}'^{[M]}_{[A]}(\R^d; E'_\ast)$ such that $V \cap \iota(\mathcal{D}'^{[M]}_{E'_\ast}) \subseteq  \iota(U)$. We may assume that $U$ is the polar set of a bounded set $B$ of $\mathcal{D}^{[M]}_E$, that is,
$$
U = \{ f \in \mathcal{D}'^{[M]}_{E'_\ast} \, | \, \sup_{\varphi \in B} |\langle f, \varphi \rangle| \leq 1 \}.
$$
By Corollary \ref{factorization-TIB-cor}, there exist $\psi \in \mathcal{S}^{[M]}_{[A]}(\R^d)$ and a bounded set $A \subset \mathcal{D}^{[M]}_E$ such that $\psi \ast A = B$. Let $R > 0$ be such  that $\|\varphi\|_E \leq R$ for all $\varphi \in A$. Consider the following neighborhood of zero in $\mathcal{S}'^{[M]}_{[A]}(\R^d; E'_\ast)$
$$
V = \{ \mathbf{f} \in \mathcal{S}'^{[M]}_{[A]}(\R^d; E'_\ast) \, | \, \|\langle\mathbf{f},\check{\psi}\rangle\|_{E'} \leq 1/R \}.
$$
For $f \in  \mathcal{D}'^{[M]}_{E'_\ast}$ with $\iota(f) \in V$ we have that
$$
\sup_{\varphi \in B} |\langle f, \varphi \rangle| = \sup_{\chi \in A} |\langle f, \psi \ast \chi \rangle| =  \sup_{\chi \in A} |\langle f \ast \check \psi, \chi \rangle| =  \sup_{\chi \in A} |\langle \iota(f)(\check \psi), \chi \rangle| \leq 1,
$$
which means that $f \in U$.
\end{proof}

\begin{remark}
The proof of Proposition \ref{embedding-TIB} in fact shows that
 $$
\iota: \mathcal{D}'^{[M]}_{E'_\ast} \rightarrow \mathcal{L}_\sigma(\mathcal{S}^{[M]}_{[A]}(\R^d), E'_\ast)
$$
is a topological embedding.
\end{remark}

\subsection{Weighted spaces of ultradifferentiable functions}\label{weighted}  Fix a weight sequence $M$ satisfying $(M.2)$ and $(M.2)^*$.

A pointwise increasing sequence $\mathcal{W} = (w_n)_{n \in \N}$ of  positive continuous functions on $\R^d$ is called an \emph{increasing weight system} if the following two properties are satisfied:
\begin{equation}
\forall n \in \N \, \exists m \geq n \, : \, \lim_{|x| \to \infty} \frac{w_n(x)}{w_m(x)} = 0
\label{proper}
\end{equation}
and
\begin{equation}
 \forall n \in \N \,  \exists m \geq n \, \exists \kappa > 0 \, \exists C > 0 \, \forall x,y \in \R^d \, : \, w_n(x+ y) \leq Cw_{m}(x)e^{\kappa|y|}.
\label{exp-bdd}
\end{equation}
Similarly, a pointwise decreasing sequence $\mathcal{V} = (v_n)_{n \in \N}$ of  positive continuous functions on $\R^d$ is called a \emph{decreasing weight system} if the following two properties are satisfied:
\begin{equation}
\forall n \in \N \, \exists m \geq n \, : \, \lim_{|x| \to \infty} \frac{v_m(x)}{v_n(x)} = 0,
\label{proper-1}
\end{equation}
\begin{equation}
 \forall n \in \N \, \exists m \geq n \, \exists \kappa > 0 \, \exists C > 0 \, \forall x,y \in \R^d \, : \, v_m(x+ y) \leq Cv_{n}(x)e^{\kappa|y|}.
\label{exp-bdd-1}
\end{equation}
We remark that \eqref{proper-1} is condition $(S)$ from \cite{Bierstedt}.
\begin{example}\label{example-1}
Let $A$ be a weight sequence satisfying $p! \subset A$. Then, $\mathcal{W}_{(A)} = (e^{\nu_A(n \, \cdot \,)})_{n \in \N}$
is an increasing weight systems. Likewise,  $\mathcal{V}_{\{A\}} = (e^{\nu_A( \,\cdot \,/n)})_{n \in \Z_+}$ is a decreasing weight system.
Indeed, for all $h > k > 0$ it holds that $\nu_A(ht) - \nu_A(kt) \rightarrow \infty$ as $t \rightarrow \infty$; this follows from \cite[Equation (3.11)]{Komatsu}. Hence, the conditions \eqref{proper} and \eqref{proper-1} are satisfied. The conditions \eqref{exp-bdd} and \eqref{exp-bdd-1} follow from the assumption $p! \subset A$.
\end{example}
Let $w$ be a positive continuous function on $\R^d$. For $h > 0$  we write $\mathcal{B}^{M,h}_{w}(\R^d)$ for the Banach space consisting of all $\varphi \in C^\infty(\R^d)$ such that
$$
\| \varphi \|_{\mathcal{B}^{M,h}_w} := \sup_{\alpha \in \N^d} \sup_{x \in \R^d} \frac{h^{|\alpha|}|\partial^{\alpha}\varphi(x)|w(x)}{M_\alpha} < \infty.
$$
Given an increasing weight system $\mathcal{W} = (w_n)_{n \in \N}$, we define
$$
\mathcal{B}^{(M)}_{\mathcal{W}}(\R^d) = \varprojlim_{n \in \N} \mathcal{B}^{M,n}_{w_n}(\R^d) \qquad \mbox{and}\qquad \mathcal{B}^{\{M\}}_{\mathcal{W}}(\R^d) =  \varinjlim_{h \rightarrow 0^+} \varprojlim_{n \in \N} \mathcal{B}^{M,h}_{w_n}(\R^d).
$$
Similarly, given a decreasing weight system $\mathcal{V} = (v_n)_{n \in \N}$, we define
$$
\mathcal{B}^{(M)}_{\mathcal{V}}(\R^d) = \varinjlim_{n \in \N} \varprojlim_{h \rightarrow \infty}  \mathcal{B}^{M,h}_{v_n}(\R^d) \qquad \mbox{and}\qquad \mathcal{B}^{\{M\}}_{\mathcal{V}}(\R^d) =  \varinjlim_{n \in \N} \mathcal{B}^{M,1/n}_{v_n}(\R^d).
$$
Furthermore, we set
$$
\mathcal{B}^{M,h}_{\mathcal{W}}(\R^d) = \varprojlim_{n \in \N} \mathcal{B}^{M,h}_{w_n}(\R^d) \qquad \mbox{and}\qquad \mathcal{B}^{(M)}_{v_n}(\R^d) = \varprojlim_{h \to \infty} \mathcal{B}^{M,h}_{v_n}(\R^d).
$$
We refer to \cite{D-V} for more information on these spaces.
\begin{example}\label{example-2}
Let $A$ be a weight sequence satisfying $p! \subset A$. Then,
$$
\mathcal{S}^{(M)}_{(A)}(\R^d) = \mathcal{B}^{(M)}_{\mathcal{W}_{(A)}}(\R^d)\qquad \mbox{and}\qquad \mathcal{S}^{\{M\}}_{\{A\}}(\R^d) = \mathcal{B}^{\{M\}}_{\mathcal{V}_{\{A\}}}(\R^d).
$$
Notice that we have $\mathcal{K}^{\{M\}}(\R^d) = \mathcal{B}^{\{M\}}_{\mathcal{W}_{(p!)}}(\R^d)$.
\end{example}
We fix an increasing weight system $\mathcal{W} = (w_n)_{n \in \N}$ and a decreasing weight system $\mathcal{V} = (v_n)_{n \in \N}$. Given a positive continuous function $w$ on $\R^d$, we denote by $C_w(\R^d)$  the Banach space consisting of all $f \in C(\R^d)$ such that $\|f\|_{L^\infty_w} < \infty$.  We define the Fr\'echet space
$$
\mathcal{W}C(\R^d) = \varprojlim_{n \in \N} C_{w_n}(\R^d)
$$
and the $(LB)$-space
 $$
 \mathcal{V}C(\R^d) = \varinjlim_{n \in \N} C_{v_n}(\R^d).
$$
Since $\mathcal{V}$ satisfies \eqref{proper-1}, the space $\mathcal{V}C(\R^d)$ is  boundedly retractive and thus complete \cite[p.~100]{Bierstedt}. Then, $(T, \mathcal{W}C(\R^d))$ is a projective  generalized proto-Banach representation and  $(T, \mathcal{V}C(\R^d))$ is an inductive generalized proto-Banach representation.
Moreover, we have that
$$
\Pi(f)g = f \ast g, \qquad f \in C_{\operatorname{exp}}(\R^d), \, g \in E,
$$
where $E$ denotes either $\mathcal{W}C(\R^d)$ or $\mathcal{V}C(\R^d)$.
\begin{lemma}\label{uv-weighted} \mbox{}
\begin{itemize}
\item[$(i)$] $\mathcal{W}C(\R^d)^{[M]} = \mathcal{B}^{[M]}_{\mathcal{W}}(\R^d)$ as sets. Moreover, $B \subset \mathcal{W}C(\R^d)^{(M)}$ ($B \subset \mathcal{W}C(\R^d)^{\{M\}}$) is bounded if and only if  $B$ is bounded in $\mathcal{B}^{(M)}_{\mathcal{W}}(\R^d)$ ($B$ is bounded in $\mathcal{B}^{M,h}_{\mathcal{W}}(\R^d)$ for some $h > 0$).
\item[$(ii)$] $\mathcal{V}C(\R^d)^{[M]} = \mathcal{B}^{[M]}_{\mathcal{V}}(\R^d)$ as sets. Moreover, $B \subset \mathcal{V}C(\R^d)^{(M)}$ ($B \subset \mathcal{V}C(\R^d)^{\{M\}}$) is bounded if and only if  $B$ is bounded in $\mathcal{B}^{(M)}_{v_n}(\R^d)$ for some $n \in \N$ ($B$ is bounded in $\mathcal{B}^{\{M\}}_{\mathcal{W}}(\R^d)$).
\end{itemize}
\end{lemma}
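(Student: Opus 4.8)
The plan is to derive everything from the characterization of bounded subsets of $E^{[M]}$ in Proposition \ref{char-E*} (together with its Fréchet reformulation in Remark \ref{char-E*-Frechet}), translating its abstract conditions into the concrete weighted norms $\|\cdot\|_{\mathcal{B}^{M,h}_w}$. Since $T$ is the translation representation, the orbit mapping satisfies $\gamma_e(x)(t)=e(t-x)$ and hence $\partial^\alpha\gamma_e(x)=(-1)^{|\alpha|}T_x(\partial^\alpha e)$; thus $e$ is a smooth vector exactly when all its derivatives remain in the space, $e_\alpha=(-1)^{|\alpha|}\partial^\alpha e$, and (the sign being irrelevant) each $p(e_\alpha)$ is a weighted sup-norm of $\partial^\alpha e$. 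Because singletons are bounded, it suffices to establish the bornological statements, the set equalities being the special case $\widetilde B=\{e\}$. In each case the forward inclusion is a direct translation of Proposition \ref{char-E*}, while for the reverse inclusion one must first check that a function lying in the relevant $\mathcal{B}$-space is genuinely a smooth vector; this is done as in the proof of Proposition \ref{char-E*} (implications $(ii)\Rightarrow(iv)\Rightarrow(i)$), using the exponential shift bounds \eqref{exp-bdd} and \eqref{exp-bdd-1} to estimate $\|T_x(\partial^\alpha e)\|_{C_{w_m}}$, resp. $\|T_x(\partial^\alpha e)\|_{C_{v_m}}$, by $Ce^{\kappa|x|}\|\partial^\alpha e\|_{C_{w_n}}$, resp. $Ce^{\kappa|x|}\|\partial^\alpha e\|_{C_{v_n}}$, for a suitable $m\geq n$, and then invoking Lemma \ref{smooth-E-B} to upgrade to $E_B$-valued smoothness of $\gamma_e$.

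For $(i)$ the space $\mathcal{W}C(\R^d)$ is Fréchet with defining seminorms $p_n=\|\cdot\|_{L^\infty_{w_n}}$, so $p_n(e_\alpha)=\sup_x|\partial^\alpha e(x)|w_n(x)$ and the conditions of Proposition \ref{char-E*} become conditions on the norms $\|e\|_{\mathcal{B}^{M,h}_{w_n}}$. In the Beurling case Remark \ref{char-E*-Frechet} says $\widetilde B$ is bounded iff $\sup_{e\in\widetilde B}\|e\|_{\mathcal{B}^{M,h}_{w_n}}<\infty$ for all $h>0$ and all $n$; since $\mathcal{W}$ is increasing, choosing an integer $m\geq\max(h,n)$ and using $w_n\leq w_m$ shows this is equivalent to boundedness in the diagonal projective limit $\mathcal{B}^{(M)}_{\mathcal{W}}=\varprojlim_n\mathcal{B}^{M,n}_{w_n}$. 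In the Roumieu case condition $(iii)$ yields some $h>0$ with $\sup_{e\in\widetilde B}\|e\|_{\mathcal{B}^{M,h}_{w_n}}<\infty$ for all $n$, which is precisely boundedness in $\mathcal{B}^{M,h}_{\mathcal{W}}$ for some $h>0$, and for $\widetilde B=\{e\}$ membership in $\mathcal{B}^{\{M\}}_{\mathcal{W}}=\varinjlim_{h\to0^+}\varprojlim_n\mathcal{B}^{M,h}_{w_n}$.

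For $(ii)$ the space $\mathcal{V}C(\R^d)$ is an $(LB)$-space which by \eqref{proper-1} is boundedly retractive, so every $B\in\mathfrak{B}(\mathcal{V}C)$ sits as a bounded set inside some $C_{v_n}$ with $E_B\hookrightarrow C_{v_n}$ continuous; this lets me replace the $E_B$-norms in Proposition \ref{char-E*}$(ii)$ by $\|\cdot\|_{C_{v_n}}$, turning $\|e_\alpha\|_{E_B}$ into $\sup_x|\partial^\alpha e(x)|v_n(x)$. In the Beurling case the existence of $r\in\mathfrak{R}$ with $\sup_\alpha\frac{\prod_{j=0}^{|\alpha|}r_j\|e_\alpha\|_{C_{v_n}}}{M_\alpha}<\infty$ is, by Lemma \ref{klemma}$(ii)$, equivalent to $\|e\|_{\mathcal{B}^{M,h}_{v_n}}<\infty$ for every $h>0$, i.e. to membership in $\mathcal{B}^{(M)}_{v_n}$ for some $n$, which is exactly how bounded sets of $\mathcal{B}^{(M)}_{\mathcal{V}}=\varinjlim_n\varprojlim_h\mathcal{B}^{M,h}_{v_n}$ look. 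In the Roumieu case condition $(ii)$ produces $n$ and $h>0$ with $\sup_{e\in\widetilde B}\|e\|_{\mathcal{B}^{M,h}_{v_n}}<\infty$; choosing an integer $m\geq\max(n,1/h)$ and using that $\mathcal{V}$ is decreasing ($v_m\leq v_n$) together with $1/m\leq h$ gives $\sup_{e\in\widetilde B}\|e\|_{\mathcal{B}^{M,1/m}_{v_m}}<\infty$, i.e. boundedness in $\mathcal{B}^{M,1/m}_{v_m}$ for some $m$, as required for boundedness in $\mathcal{B}^{\{M\}}_{\mathcal{V}}=\varinjlim_n\mathcal{B}^{M,1/n}_{v_n}$.

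I expect the main obstacle to be the reverse inclusions for the inductive space $\mathcal{V}C(\R^d)$, where one has to align the abstract bounded set $B\in\mathfrak{B}(\mathcal{V}C)$, the weight index $n$, the regularity index $h$, and the shift index $m$ coming from \eqref{exp-bdd-1}, and where bounded retractivity (to realize $E_B$ inside a single $C_{v_n}$) and Lemma \ref{smooth-E-B} (to pass from the scalar estimates back to genuine $E_B$-valued smoothness of the orbit map) carry the real weight. By comparison, the interchanges between the ``for all $h$'' and ``exists $r\in\mathfrak{R}$'' descriptions are routine applications of Lemma \ref{klemma}, and the monotonicity of $\mathcal{W}$ and $\mathcal{V}$ handles the collapse of the double limits to their diagonal forms.
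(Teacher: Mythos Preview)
Your proposal is correct and follows essentially the same approach as the paper: identify $E^\infty=\mathcal{D}_E$ with $e_\alpha=(-1)^{|\alpha|}\partial^\alpha e$ for the translation representation, and then read off everything from Proposition~\ref{char-E*} (together with Remark~\ref{char-E*-Frechet} in the Fr\'echet case $\mathcal{W}C(\R^d)$). The paper's own proof is in fact much terser than yours---it simply asserts $E^\infty=\mathcal{D}_E$ and invokes Proposition~\ref{char-E*}---so your expanded discussion of the diagonal collapse of the double limits, the use of bounded retractivity for $\mathcal{V}C(\R^d)$, and the role of the shift bounds \eqref{exp-bdd}--\eqref{exp-bdd-1} are all correct elaborations of steps the paper leaves implicit. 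One small caution: Lemma~\ref{smooth-E-B} already assumes $\gamma_e\in C^\infty(\R^d;E)$, so you cannot use it to \emph{establish} that $e$ is a smooth vector; the inclusion $\mathcal{D}_E\subseteq E^\infty$ must be checked directly (a standard mean-value argument using the shift bounds), after which Proposition~\ref{char-E*} applies verbatim and Lemma~\ref{smooth-E-B} is not needed separately.
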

\begin{proof}
Let $E$ denote either $\mathcal{W}C(\R^d)$ or $\mathcal{V}C(\R^d)$.  We write $\mathcal{D}_{E}$ for the space  consisting of all $\varphi \in C^\infty(\R^d)$ such that $\partial^\alpha \varphi \in E$ for all $\alpha \in \N^d$. Then,   $E^\infty = \mathcal{D}_E$ and $\varphi_\alpha = (-1)^{|\alpha|}\partial^\alpha \varphi$ for all $\alpha \in \N^d$ and $\varphi \in E^{\infty}$. Hence, the result follows from Proposition \ref{char-E*} (and Remark \ref{char-E*-Frechet}).
\end{proof}
Theorem \ref{factorization} and Lemma \ref{uv-weighted} imply the ensuing factorization result.
\begin{theorem}\label{factorization-GS} \mbox{}
\begin{itemize}
\item[$(i)$] Let $\mathcal{W} = (w_n)_{n \in \N}$ be an increasing weight system. We have that
$$
\mathcal{K}^{[M]}(\R^d) \ast \mathcal{B}^{{[M]}}_\mathcal{W}(\R^d)  = \mathcal{B}^{{[M]}}_{\mathcal{W}}(\R^d).
$$
Moreover, in the Beurling case, for every bounded set $B \subset \mathcal{B}^{(M)}_{\mathcal{W}}(\R^d)$ there are $\psi \in \mathcal{K}^{(M)}(\R^d)$ and a bounded set $A \subset \mathcal{B}^{(M)}_{\mathcal{W}}(\R^d)$ such that $\psi \ast A = B$. In the Roumieu case, for every $h > 0$ and every bounded set $B \subset \mathcal{B}^{M,h}_{\mathcal{W}}(\R^d)$ there are $\psi \in \mathcal{K}^{\{M\}}(\R^d)$ and a bounded set $A \subset \mathcal{B}^{M,k}_{\mathcal{W}}(\R^d)$, for some $k > 0$, such that $\psi \ast A = B$.
\item[$(ii)$] Let $\mathcal{V} = (v_n)_{n \in \N}$ be a  decreasing weight system. We have that
$$
\mathcal{K}^{[M]}(\R^d) \ast \mathcal{B}^{{[M]}}_\mathcal{V}(\R^d)  = \mathcal{B}^{{[M]}}_{\mathcal{V}}(\R^d).
$$
Moreover, in the Beurling case, for every $n \in \N$ and every bounded set $B \subset \mathcal{B}^{(M)}_{v_n}(\R^d)$ there are $\psi \in \mathcal{K}^{(M)}(\R^d)$ and a bounded set $A \subset \mathcal{B}^{(M)}_{v_m}(\R^d)$, for some $m \in \N$, such that $\psi \ast A = B$. In the Roumieu case,  for every every bounded set $B \subset \mathcal{B}^{\{M\}}_{\mathcal{V}}(\R^d)$ there are $\psi \in \mathcal{K}^{\{M\}}(\R^d)$ and a bounded set $A \subset \mathcal{B}^{\{M\}}_{\mathcal{V}}(\R^d)$ such that $\psi \ast A = B$.
\end{itemize}
\end{theorem}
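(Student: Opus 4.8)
The plan is to read both parts of the statement as immediate consequences of Theorem \ref{factorization} applied to the two representations $(T,\mathcal{W}C(\R^d))$ and $(T,\mathcal{V}C(\R^d))$ — identified in the discussion preceding Lemma \ref{uv-weighted} as a projective, respectively an inductive, generalized proto-Banach representation — once the abstract output is transported back to the concrete function spaces through Lemma \ref{uv-weighted}. The one preliminary observation I would record is that every $\varphi \in \mathcal{K}^{[M]}(\R^d)$ lies in $C_{\operatorname{exp}}(\R^d)$ (the defining estimate of $\mathcal{K}^{M,h}$ already bounds $|\varphi(x)|e^{n|x|}$), so the operator $\Pi(\varphi)$ is defined on these representations and satisfies $\Pi(\varphi)g = \varphi \ast g$; this is what turns the abstract factorization identity into the asserted convolution identity.

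For part $(i)$ I would set $E = \mathcal{W}C(\R^d)$. Theorem \ref{factorization} gives $\Pi(\mathcal{K}^{[M]}(\R^d))E^{[M]} = E^{[M]}$, and Lemma \ref{uv-weighted}$(i)$ identifies $E^{[M]} = \mathcal{B}^{[M]}_{\mathcal{W}}(\R^d)$ as sets; combined with $\Pi(\varphi)g = \varphi \ast g$ this is exactly $\mathcal{K}^{[M]}(\R^d)\ast\mathcal{B}^{[M]}_{\mathcal{W}}(\R^d)=\mathcal{B}^{[M]}_{\mathcal{W}}(\R^d)$. For the refined bounded-set claims I would feed each prescribed set through the ``moreover'' part of Theorem \ref{factorization}. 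In the Beurling case a bounded $B \subset \mathcal{B}^{(M)}_{\mathcal{W}}(\R^d)$ is, by Lemma \ref{uv-weighted}$(i)$, bounded in $E^{(M)}$, so Theorem \ref{factorization} yields $\psi \in \mathcal{K}^{(M)}(\R^d)$ and a bounded $A \subset E^{(M)}$ with $\psi \ast A = B$; a second application of Lemma \ref{uv-weighted}$(i)$ returns $A$ to $\mathcal{B}^{(M)}_{\mathcal{W}}(\R^d)$. In the Roumieu case a bounded $B \subset \mathcal{B}^{M,h}_{\mathcal{W}}(\R^d)$ is bounded in $E^{\{M\}}$, and the output $A$, being bounded in $E^{\{M\}}$, is bounded in some $\mathcal{B}^{M,k}_{\mathcal{W}}(\R^d)$, which is the stated conclusion.

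Part $(ii)$ runs through the identical machinery with $E = \mathcal{V}C(\R^d)$, now invoking Lemma \ref{uv-weighted}$(ii)$ to identify $E^{[M]} = \mathcal{B}^{[M]}_{\mathcal{V}}(\R^d)$ and to read off the correct bornology: a bounded $B \subset \mathcal{B}^{(M)}_{v_n}(\R^d)$ produces, after one pass, a bounded $A \subset \mathcal{B}^{(M)}_{v_m}(\R^d)$ for some $m$, while a bounded $B \subset \mathcal{B}^{\{M\}}_{\mathcal{V}}(\R^d)$ produces a bounded $A$ in the same space. Since the whole argument merely transports Theorem \ref{factorization} across the dictionary of Lemma \ref{uv-weighted}, there is no genuine analytic obstacle; the only point demanding care — and the one I would be most careful to get right — is the bornological bookkeeping, i.e.\ matching the precise space in which the output set $A$ lands ($\mathcal{B}^{(M)}_{\mathcal{W}}$ versus $\mathcal{B}^{M,k}_{\mathcal{W}}$, and $\mathcal{B}^{(M)}_{v_m}$ versus $\mathcal{B}^{\{M\}}_{\mathcal{V}}$) as dictated by the Beurling/Roumieu distinction in Lemma \ref{uv-weighted}, and confirming that $\psi \ast g = \Pi(\psi)g$ is used consistently so that the abstract identity genuinely becomes the claimed convolution factorization.
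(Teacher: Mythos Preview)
Your proposal is correct and is exactly the paper's approach: the paper's proof is the single sentence ``Theorem \ref{factorization} and Lemma \ref{uv-weighted} imply the ensuing factorization result,'' and you have simply spelled out the translation between the abstract and concrete statements (including the identification $\Pi(\varphi)g = \varphi\ast g$ already recorded just before Lemma \ref{uv-weighted}).
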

\begin{corollary} \label{cor-GS}
Let $A$ be a weight sequence satisfying $p! \subset A$. We have that
$$
\mathcal{K}^{{[M]}}(\R^d) \ast\mathcal{S}^{{[M]}}_{{[A]}}(\R^d) = \mathcal{S}^{{[M]}}_{{[A]}}(\R^d) \ast\mathcal{S}^{{[M]}}_{{[A]}}(\R^d) =\mathcal{S}^{{[M]}}_{{[A]}}(\R^d).
$$
Moreover, for every bounded set $B \subset \mathcal{S}^{{[M]}}_{{[A]}}(\R^d)$ there are $\psi  \in \mathcal{K}^{[M]}(\R^d)$ and a bounded set $A \subset \mathcal{S}^{{[M]}}_{{[A]}}(\R^d)$ such that $\psi \ast A = B$.
\end{corollary}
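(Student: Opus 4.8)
The plan is to obtain the whole statement as a direct consequence of Theorem \ref{factorization-GS}, via the identifications recorded in Example \ref{example-2}, supplemented only by the elementary fact that $\mathcal{S}^{[M]}_{[A]}(\R^d)$ is a convolution algebra. First I would use Example \ref{example-2} to rewrite the Gelfand-Shilov spaces as weighted spaces: $\mathcal{S}^{(M)}_{(A)}(\R^d) = \mathcal{B}^{(M)}_{\mathcal{W}_{(A)}}(\R^d)$, attached to the increasing weight system $\mathcal{W}_{(A)}$, and $\mathcal{S}^{\{M\}}_{\{A\}}(\R^d) = \mathcal{B}^{\{M\}}_{\mathcal{V}_{\{A\}}}(\R^d)$, attached to the decreasing weight system $\mathcal{V}_{\{A\}}$. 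Applying Theorem \ref{factorization-GS}$(i)$ in the Beurling case and Theorem \ref{factorization-GS}$(ii)$ in the Roumieu case then yields at once
$$
\mathcal{K}^{[M]}(\R^d) \ast \mathcal{S}^{[M]}_{[A]}(\R^d) = \mathcal{S}^{[M]}_{[A]}(\R^d),
$$
and the bounded-set factorizations supplied by the same theorem give, for every bounded $B \subseteq \mathcal{S}^{[M]}_{[A]}(\R^d)$, a function $\psi \in \mathcal{K}^{[M]}(\R^d)$ together with a bounded subset of $\mathcal{S}^{[M]}_{[A]}(\R^d)$ whose convolution with $\psi$ equals $B$; this is precisely the final assertion of the corollary.

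It then remains to insert the middle term $\mathcal{S}^{[M]}_{[A]}(\R^d) \ast \mathcal{S}^{[M]}_{[A]}(\R^d)$ into the chain of equalities. For this I would combine two inclusions. On the one hand, since $p! \subset A$ we have $\mathcal{K}^{[M]}(\R^d) \subseteq \mathcal{S}^{[M]}_{[A]}(\R^d)$, so that $\mathcal{K}^{[M]}(\R^d) \ast \mathcal{S}^{[M]}_{[A]}(\R^d) \subseteq \mathcal{S}^{[M]}_{[A]}(\R^d) \ast \mathcal{S}^{[M]}_{[A]}(\R^d)$. On the other hand, $\mathcal{S}^{[M]}_{[A]}(\R^d)$ is stable under convolution, i.e., $\mathcal{S}^{[M]}_{[A]}(\R^d) \ast \mathcal{S}^{[M]}_{[A]}(\R^d) \subseteq \mathcal{S}^{[M]}_{[A]}(\R^d)$. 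Feeding these two inclusions into the displayed equality produces the sandwich
$$
\mathcal{S}^{[M]}_{[A]}(\R^d) = \mathcal{K}^{[M]}(\R^d) \ast \mathcal{S}^{[M]}_{[A]}(\R^d) \subseteq \mathcal{S}^{[M]}_{[A]}(\R^d) \ast \mathcal{S}^{[M]}_{[A]}(\R^d) \subseteq \mathcal{S}^{[M]}_{[A]}(\R^d),
$$
whence all three spaces coincide, which is exactly the claimed chain of equalities.

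The only ingredient that is not a formal consequence of the earlier results is the convolution-algebra property, and this is where I expect the (mild) work to lie. I would verify it directly, using only $p! \subset A$: writing $\partial^\alpha(\varphi \ast \chi) = \varphi \ast \partial^\alpha\chi$ transfers the class-$[M]$ derivative bounds from $\chi$, while the decay is controlled by the weighted estimate
$$
\int_{\R^d} e^{-\nu_A(\kappa y)} e^{-\nu_A(\kappa(x-y))} \dy \leq C\, e^{-\nu_A(\kappa |x|/2)},
$$
which is obtained by splitting the domain according to $|y| \geq |x|/2$ or $|x-y| \geq |x|/2$, using the monotonicity of $\nu_A$ on the larger factor and the integrability of $e^{-\nu_A(\kappa\,\cdot\,)}$ over $\R^d$ (guaranteed because $A_p^{1/p} \to \infty$ forces $\nu_A(t)/\log t \to \infty$). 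Alternatively, one may simply cite the convolution-algebra property of these spaces from \cite{ppv}. Everything else in the corollary is a direct translation of Theorem \ref{factorization-GS} through Example \ref{example-2}.
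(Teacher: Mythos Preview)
Your proposal is correct and is exactly the argument the paper intends: the corollary is stated without proof because it is meant to follow immediately from Theorem \ref{factorization-GS} via the identifications in Example \ref{example-2}, together with the inclusion $\mathcal{K}^{[M]}(\R^d) \subseteq \mathcal{S}^{[M]}_{[A]}(\R^d)$ (already noted in the paper) and the convolution-algebra property of $\mathcal{S}^{[M]}_{[A]}(\R^d)$ (available from \cite{ppv}). Your handling of the middle term by sandwiching is the natural way to complete the chain of equalities.
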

\begin{remark}\label{regular}
Theorem \ref{factorization-GS} implies that we can factorize bounded sets in $\mathcal{B}^{\{M\}}_{\mathcal{W}}(\R^d)$ and $\mathcal{B}^{(M)}_{\mathcal{V}}(\R^d)$  provided that these $(LF)$-spaces are regular.
The space $\mathcal{B}^{\{M\}}_{\mathcal{W}}(\R^d)$ is regular if $\mathcal{W}$ satisfies
\begin{equation}
\exists n \in \N \, \forall m \geq n \, \exists k \geq m \, \exists C > 0 \, \forall x \in \R^d \, : \, w^2_m(x) \leq C w_n(x)w_k(x).
\label{DN}
\end{equation}
If $M$ additionally satisfies Komatsu's condition $(M.3)$ (strong non-quasianalyticity) \cite{Komatsu}, condition \eqref{DN} is also necessary for $\mathcal{B}^{(M)}_{\mathcal{V}}(\R^d)$ to be regular \cite[Theorem 3.6]{D-V}. Likewise, the space $\mathcal{B}^{(M)}_{\mathcal{V}}(\R^d)$ is regular if $\mathcal{V}$ satisfies
\begin{equation}
\forall n \in \N \, \exists m \geq n \, \forall k \geq m \, \exists \theta \in (0,1) \, \exists C > 0 \, \forall x \in \R^d \, : \,
 v_m(x) \leq C v^{1-\theta}_n(x)v^{\theta}_k(x).
\label{Omega}
\end{equation}
If in addition $M$  satisfies $(M.3)$,  condition \eqref{Omega} is also necessary for  $\mathcal{B}^{(M)}_{\mathcal{V}}(\R^d)$ to be regular \cite[Theorem 3.7]{D-V}. The conditions \eqref{DN} and \eqref{Omega} are closely connected and inspired by the linear topological invariants $(DN)$ and $(\Omega)$ for Fr\'echet spaces \cite{MeiseVogtBook}.
\end{remark}
Theorem \ref{factorization} and Remark \ref{regular} then yield the following result.
\begin{corollary} \label{cor-K}
We have that
$
\mathcal{K}^{{[M]}}(\R^d) \ast \mathcal{K}^{{[M]}}(\R^d) = \mathcal{K}^{{[M]}}(\R^d).
$
Moreover, for every bounded set $B \subset \mathcal{K}^{{[M]}}(\R^d)$ there are $\psi  \in \mathcal{K}^{[M]}(\R^d)$ and a bounded set $A \subset \mathcal{K}^{{[M]}}(\R^d)$ such that $\psi \ast A = B$.
\end{corollary}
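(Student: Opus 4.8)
The plan is to obtain Corollary \ref{cor-K} as the special case of Theorem \ref{factorization-GS}$(i)$ (itself a consequence of Theorem \ref{factorization}) associated to the increasing weight system $\mathcal{W}_{(p!)} = (e^{\nu_{p!}(n\,\cdot\,)})_{n \in \N}$ from Example \ref{example-1}, combined with the regularity criterion of Remark \ref{regular}. First I would record the identification $\mathcal{K}^{[M]}(\R^d) = \mathcal{B}^{[M]}_{\mathcal{W}_{(p!)}}(\R^d)$ in both types: in the Beurling case $\mathcal{K}^{(M)}(\R^d) = \mathcal{S}^{(M)}_{(p!)}(\R^d) = \mathcal{B}^{(M)}_{\mathcal{W}_{(p!)}}(\R^d)$, the first equality being noted above and the second being Example \ref{example-2} with $A = p!$, while the Roumieu identity $\mathcal{K}^{\{M\}}(\R^d) = \mathcal{B}^{\{M\}}_{\mathcal{W}_{(p!)}}(\R^d)$ is recorded explicitly in Example \ref{example-2}. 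With this identification in hand, Theorem \ref{factorization-GS}$(i)$ applied to $\mathcal{W} = \mathcal{W}_{(p!)}$ immediately yields the algebraic identity $\mathcal{K}^{[M]}(\R^d) \ast \mathcal{K}^{[M]}(\R^d) = \mathcal{K}^{[M]}(\R^d)$.

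For the factorization of bounded sets, the Beurling case is already the full content of Theorem \ref{factorization-GS}$(i)$ after substituting $\mathcal{B}^{(M)}_{\mathcal{W}_{(p!)}}(\R^d) = \mathcal{K}^{(M)}(\R^d)$, so nothing further is needed there. In the Roumieu case, Theorem \ref{factorization-GS}$(i)$ only factors sets that are bounded in a single Banach step $\mathcal{B}^{M,h}_{\mathcal{W}_{(p!)}}(\R^d)$, so I would first reduce an arbitrary bounded subset $B$ of the $(LF)$-space $\mathcal{K}^{\{M\}}(\R^d) = \mathcal{B}^{\{M\}}_{\mathcal{W}_{(p!)}}(\R^d)$ to such a step. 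This is precisely where Remark \ref{regular} enters: it suffices to check that $\mathcal{W}_{(p!)}$ satisfies condition \eqref{DN}, whence $\mathcal{B}^{\{M\}}_{\mathcal{W}_{(p!)}}(\R^d)$ is regular and $B$ is bounded in some $\mathcal{B}^{M,h}_{\mathcal{W}_{(p!)}}(\R^d)$. Condition \eqref{DN} follows at once from $\nu_{p!}(t) \asymp t$: fixing the base index $n = 1$ and choosing $k$ large enough that $\nu_{p!}(k\,\cdot\,)$ dominates $2\nu_{p!}(m\,\cdot\,)$ up to an additive constant gives $w_m^2 \leq C w_n w_k$. Applying Theorem \ref{factorization-GS}$(i)$ to this step then produces $\psi \in \mathcal{K}^{\{M\}}(\R^d)$ together with a set $A$ that is bounded in some step $\mathcal{B}^{M,k}_{\mathcal{W}_{(p!)}}(\R^d)$, hence bounded in $\mathcal{K}^{\{M\}}(\R^d)$, and satisfies $\psi \ast A = B$.

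The only genuine subtlety is the reduction in the Roumieu case, namely passing from a bounded subset of the inductive limit to a bounded subset of one of its Banach steps; all the analytic work for this has already been absorbed into Remark \ref{regular}, so what remains is just the elementary verification of \eqref{DN} for $\mathcal{W}_{(p!)}$. Everything else is a direct specialization of the weighted factorization theorem, and no additional estimates are required.
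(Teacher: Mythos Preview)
Your proposal is correct and follows essentially the same route as the paper, which derives the corollary from Theorem \ref{factorization} (via its specialization Theorem \ref{factorization-GS}$(i)$ with $\mathcal{W} = \mathcal{W}_{(p!)}$) together with Remark \ref{regular}. Your explicit verification of condition \eqref{DN} for $\mathcal{W}_{(p!)}$ using $\nu_{p!}(t) \asymp t$ is exactly the missing detail the paper leaves implicit, and your handling of the Beurling and Roumieu cases matches the intended argument.
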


\end{document}